\documentclass[12pt]{extarticle}
\usepackage{amsmath, amsthm, amssymb, color}
\usepackage[colorlinks=true,linkcolor=blue,urlcolor=blue]{hyperref}
\usepackage{graphicx}
\usepackage{caption}
\usepackage{mathtools}
\usepackage{enumerate}
\usepackage{verbatim}
\usepackage{tikz,tikz-cd,tikz-3dplot}
\usepackage{amssymb}
\usetikzlibrary{matrix}
\usetikzlibrary{arrows}
\usetikzlibrary{shapes.geometric}
\usepackage{algorithm}
\usepackage[noend]{algpseudocode}
\usepackage{caption}
\usepackage[normalem]{ulem}
\usepackage{subcaption}
\usepackage{multicol}
\usepackage[T1]{fontenc}
\oddsidemargin \evensidemargin
\usepackage{makecell}
\usepackage{array}
\usepackage{enumitem}
\usepackage{booktabs}
\usepackage{diagbox}

\newtheorem{theorem}{Theorem}[section]

\newtheorem{proposition}[theorem]{Proposition}
\newtheorem{lemma}[theorem]{Lemma}
\newtheorem{corollary}[theorem]{Corollary}

\theoremstyle{definition}
\newtheorem{definition}[theorem]{Definition}

\newtheorem{remark}[theorem]{Remark}
\newtheorem{conjecture}[theorem]{Conjecture}

\newtheorem{example}[theorem]{Example}

\newcommand{\PP}{\mathbb{P}}
\newcommand{\RR}{\mathbb{R}}

\newcommand{\CC}{\mathbb{C}}

\newcommand{\ostar}{%
  \tikz[baseline=-0.6ex]%
    \node[draw,star,star points=5,star point ratio=2.25,minimum size=1.6ex,inner sep=0pt
    ] {};%
}

\title{\bf Distance Optimization in \\ the 
Grassmannian of Lines}
\author{Hannah Friedman, Andrea Rosana and Bernd Sturmfels}

\date{}

\begin{document}

\maketitle

\begin{abstract} \noindent
The square of a skew-symmetric matrix is a symmetric matrix
whose eigenvalues have even multiplicities.
When the matrices have rank two, they represent the Grassmannian
of lines, and the squaring operation takes Pl\"ucker coordinates to projection coordinates.
We develop metric algebraic geometry for
varieties of lines in this linear algebra setting.
The Grassmann distance (GD) degree is introduced as
 a new invariant for subvarieties of a Grassmannian.
We study the GD degree for Schubert varieties and other models.
\end{abstract}

\section{Introduction}
\label{sec1}

The Grassmannian ${\rm Gr}(2,n)$ is the variety of all lines in projective $(n-1)$-space $\PP^{n-1}$.
It is embedded in the projective space
in $\PP^{\binom{n}{2}-1}$ as the variety of skew-symmetric $n \times n$-matrices
$X= [x_{ij}]$ of rank $2$. The matrix entries $x_{ij}$ are the
Pl\"ucker coordinates.
The homogeneous prime ideal of ${\rm Gr}(2,n)$ is generated by the
$4 \times 4$ Pfaffians of our skew-symmetric matrix:
\begin{equation}
\label{eq:pluckerquadric} 
{\rm Pf}\begin{small}
 \begin{bmatrix}
\,\,0 & \,\, x_{ij} & \,\,x_{ik} & \,\,x_{il}\,\, \\
\,-x_{ij} & 0 & \,\,x_{jk} & \,\,x_{jl} \,\,\\
\,-x_{ik} & \!\!-x_{jk} & 0 & \,\,x_{kl}\, \,\\
\,-x_{il} & \!\!-x_{jl} & \!\!-x_{kl} & 0 \,
\end{bmatrix}
\end{small} \,\, = \,\,
 x_{ij} x_{kl} \,- \, x_{ik} x_{jl} \,+ \, x_{il} x_{jk} \quad\,
 {\rm for} \,\,\, 1 \!\leq\! i \!<\! j\! < \! k \!< \! l  \!  \leq\! n.
\end{equation}
While Pl\"ucker coordinates are natural for classical algebraic geometry,
in applications over the real numbers $\RR$,
one often works with the square $P = X^2$; see \cite{DFRS, LLY, LY}.
The symmetric $n \times n$ matrix $P = [p_{ij}]$ has rank $2$,
and  its trace equals the squared Frobenius norm of $X$:
$$  {\rm trace}(P) \, = \,
p_{11} + \cdots + p_{nn} \,\, = \,\, 2(x_{12}^2 + x_{13}^2 + \cdots + x_{n-1,n}^2) \,=\, ||X||^2.$$
By \cite[Corollary 2.5]{DFRS},
the orthogonal projection $\tilde P : \RR^n \to {\rm image}(X) \simeq \RR^2$ is given~by
\begin{equation}
\label{eq:projmatrix} \tilde P \,\,\coloneq \,\,\frac{2}{{\rm trace}(P)} P \,\, = \,\, \frac{2}{||X||^2} X^2.
\end{equation}

We here view the Grassmannian through the lens
of metric algebraic geometry \cite{BKS}.
Given two points in ${\rm Gr}(2,n)$, encoded by
projection matrices $\tilde P = [\tilde p_{ij}]$ and $\tilde Q = [\tilde q_{ij}]$ as  in~(\ref{eq:projmatrix}),
we measure their distance in the Frobenius norm on the space
of symmetric $n \times n$ matrices:
\begin{equation}
\label{eq:frobenius}
 || \tilde P - \tilde Q || \,\, = \,\,
\biggl(\sum_{i=1}^n (\tilde p_{ii} - \tilde q_{ii})^2 \,+\,
2 \sum_{i < j}  (\tilde p_{ij} - \tilde q_{ij})^2 \biggr)^{1/2} .
\end{equation}

We consider the Euclidean distance problem
for a subvariety $\mathcal{M}$ in ${\rm Gr}(2,n)$.
This is  reminiscent of \cite[Chapter 11]{BKS}.
The model $\mathcal{M}$ is a family of lines in $\PP^{n-1}$.
We are given a data point $Q$ which is also a line
in $\PP^{n-1}$.
Our task is to compute a line $P$ in the model $\mathcal{M}$
which best approximates the given line $Q$.
Thus we wish to solve the optimization problem
\begin{equation}
\label{eq:optproblem}
 {\rm Minimize} \,\, || \tilde P - \tilde Q ||^2 \,\,\,\,\hbox{subject to} \,\,
\tilde P \in \mathcal{M}. 
\end{equation}
This is a polynomial optimization problem, so we can solve it
using methods from computational algebraic geometry.
The key parameter is the {\em Grassmann distance degree} (GD degree)
of $\mathcal{M}$. We define this as the
number of complex critical points of (\ref{eq:optproblem})
for generic $ Q \in {\rm Gr}(2,n)$.

The GD degree is
analogous to the ML degree \cite{BKS} and the ED degree \cite{DHOST}.
It provides an algebraic alternative to the 
Grassmann distance complexity due to Lerario and Rosana \cite{LR}.
Their work rests on the {\em geodesic distance}, which is not
an algebraic function \cite[Section~3.6]{LR}.

Conway, Hardin and Sloane \cite{CHS}
refer to  our metric (\ref{eq:frobenius}) as the {\em chordal distance}, and they report:
{\em ``We have made extensive computations on $\ldots$ optimal packings.
These computations have led us to conclude that the best definition
of distance on Grassmannian space is the chordal distance.''}
We note that the chordal distance between two lines in $\PP^{n-1}$ is equal~to
\begin{equation}
\label{eq:frobenius2}
 || \tilde P - \tilde Q||^2 \,=\,
{\rm trace}\bigl( \,(\tilde P - \tilde Q)^2\, \bigr)
\,=\, 4 - 2\cdot {\rm trace}(\tilde P \tilde Q) \, = \, 4 - 2 \lambda - 2 \mu,
\end{equation}
where $\lambda \geq \mu$ are the non-zero eigenvalues of 
the product $\tilde P \tilde Q$ of the two projection matrices.

Many metrics in Grassmannian optimization  are functions of the eigenvalues $\lambda$
and $\mu$; see \cite[Theorem 2]{YL}. The following table, derived from  \cite[Table 2]{YL}, 
shows nine options:
\begin{small}
\begin{center}
  \begin{tabular}{|c|c|c|}
    \toprule
  & Distance squared in $\lambda, \mu$ & Minimizer \\
    \midrule
  Chordal & $4 - 2\lambda - 2\mu $ & $\ostar$ \\
  Geodesic & $\arccos(\sqrt{\lambda})^2 + \arccos(\sqrt{\mu})^2$ & $\circ$\\
  Procrustes & $4 - 2\sqrt{\lambda} - 2\sqrt{\mu}$ & $\diamond$\\
  \midrule
  Binet-Cauchy & $1 - \lambda\mu$ & $\scriptstyle \triangle$\\
  Fubini-Study & $\arccos(\sqrt{\lambda\mu})^2$ & $\scriptstyle \triangle$\\
  Martin & $-\log(\lambda\mu) $ & $\scriptstyle \triangle$\\
  \midrule
  Asimov & $\arccos (\sqrt{\mu})^2$ & $\square$\\
  Projection & $1 - \mu$ &  $\square$\\
  Spectral & $2 - 2\sqrt{\mu}$ &  $\square$\\
  \bottomrule
  \end{tabular}
  \end{center}
  \end{small}
While our focus is on the chordal distance (\ref{eq:frobenius})-(\ref{eq:frobenius2}), 
the algebraic tools introduced in this paper are applicable to any of the metrics above.
The following example demonstrates this.

\begin{example}[Schubert surface] \label{ex:spectralcurve}
Let $n = 5$. For our model, we take the surface
$$\mathcal S_{25} \,\,=\,\, \{X \in {\rm Gr}(2,5)\,:\, x_{12} = x_{13} = x_{14} = x_{15} = x_{23} = x_{24}  = x_{34} \,=\, 0\}.$$
We write the model as a  projection matrix $\tilde P$, and we  fix a projection matrix $\tilde Q$
for the data:
$$
  \!\tilde P =\begin{small} \frac{1}{x_{25}^2\!+\!x_{35}^2 \!+\!  x_{45}^2}
  \begin{bmatrix}
  \,0 & 0 & 0 & 0 & 0 \\
\,  0 & x_{25}^2 & \! x_{25} x_{35}\! & \!\!x_{25} x_{45}\! &  0     \\
\,  0 & \!x_{25} x_{35} \!& x_{35}^2 &\! \!x_{35} x_{45} \!& 0      \\
\,  0 & \!x_{25} x_{45} \!& \!x_{35} x_{45}\! & x_{45}^2 & 0       \\
  \, 0 & 0 &     0  &    0  &  \!\! \! x_{25}^2\!+\!x_{35}^2\!+\!x_{45}^2 
   \end{bmatrix} \end{small},\,\,\,
    \tilde Q =\begin{small} \frac{1}{10}
    \begin{bmatrix}
      \,\, 6 & 4 & 2 & 0 &\!\!\! -2\\
      \,\,4 & 3 & 2 & 1 & 0\\
      \,\,2 & 2 & 2 & 2 & 2\\
      \,\,0 & 1 & 2 & 3 & 4\\
     \,\, -2 & 0 & 2 & 4 & 6
    \end{bmatrix}\!.\end{small}
$$
We define the  {\em spectral region} of the pair $(\mathcal{M},\tilde Q)$ as the
set of all pairs $\lambda \geq \mu$ of eigenvalues of 
the matrices $\tilde P \tilde Q$, where $\tilde Q$ is fixed and
$\tilde P$ ranges over the real points of the model $\mathcal{M}$.

This spectral region is shown in Figure \ref{fig:spectralregion}.
Its lower left corner is $(3/5,0)$. The eigenvalues are bounded above by
$\lambda \leq 1$ and $ \mu \leq 2/5$. The nonlinear boundary curve is the quartic
  \begin{equation} \label{eq:spectralcurve}
    100\lambda^2\mu^2 - 100\lambda^2\mu + 49\lambda^2 - 100\lambda \mu^2 + 170\lambda \mu - 84\lambda + 49\mu^2 - 84\mu + 36 \,\,=\,\, 0. 
  \end{equation}
  This specifies the {\em Pareto frontier}.
  The map from the surface $\mathcal{M}$ onto the $(\lambda,\mu)$-plane is $4$-to-$1$.
 The nine metrics give rise to five distinct minimizers on the Pareto frontier.
 For instance, the circle is for the geodesic distance in \cite{LR}.
 The minimizer for our distance (\ref{eq:frobenius})-(\ref{eq:frobenius2}),
 indicated by a star, satisfies $x_{25} = x_{35} = x_{45} = 1$, and
 $\lambda = \frac{3 + \sqrt{3}}{5} = 0.9464$,
    $\mu = \frac{3-\sqrt{3}}{5} =         0.2536$, with
    optimal value $8/5$.
    The curve (\ref{eq:spectralcurve}) can be viewed as the
    universal solution for all nine metrics.
    \end{example}

\begin{figure}[h]
  \center
  \includegraphics[scale=0.55]{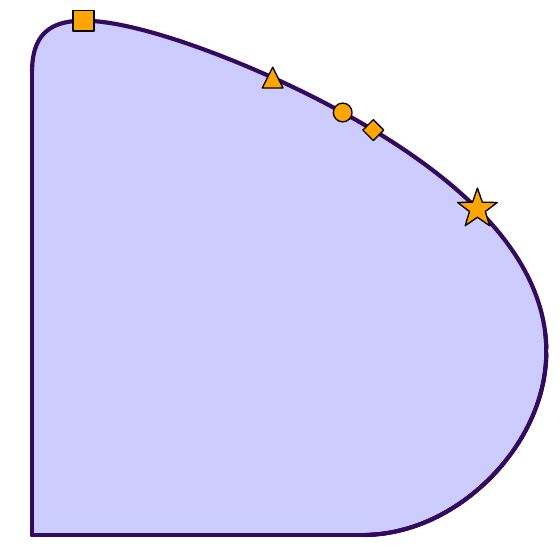}
  \caption{The spectral region comprises all eigenvalue pairs $(\lambda,\mu)$
  as $X$ ranges over $\mathcal{M}$.
  The marked points are the optimal solutions for the nine metrics 
  on ${\rm Gr}(2,5)$ in the table~above.
  }\label{fig:spectralregion}
\end{figure}

We now discuss the organization and contributions of this paper.
In Section \ref{sec2} we study the squaring 
map $X \mapsto X^2$ for skew-symmetric $n \times n$ matrices $X$.
Theorem \ref{thm:skew-sym} characterizes its image and fibers.
The restriction to the rank $2$ locus identifies ${\rm Gr}(2,n)$
with the projection Grassmannian ${\rm pGr}(2,n)$, as shown in \cite[Section 2]{DFRS}.
Theorems \ref{thm:baselocus} and \ref{thm:stenzel} feature
varieties that arise naturally from multiplying skew-symmetric
 matrices.
In Section \ref{sec3} we introduce the polynomial ideals
that characterize the critical points of (\ref{eq:optproblem}),
 where $\tilde Q$
is an arbitrary symmetric $n \times n$ matrix.
Theorem \ref{thm:EDgeneral} identifies what is special about this ED problem.
Theorem \ref{thm:rtlfct} counts the critical points of a rational function
and is of independent interest.

In Section \ref{sec4} we turn to the GD problem.
Now the data point $\tilde Q $ itself lies in  $ {\rm pGr}(2,n)$.   
The critical ideal and the GD degree are introduced in
Definition \ref{def:GDdefs}. Their computation requires the
removal of the algebraic cut locus.
Theorem \ref{thm:excess}
explains the difference between the ED degree and the GD degree.
 Section \ref{sec5} is devoted to lines in $3$-space ($n=4$).
 Models $\mathcal{M}$ in ${\rm Gr}(2,4)$ have dimension
one (ruled surfaces), two (congruences) and three (complexes).
They  appear in  computer vision \cite{PST}, 
physics~\cite{MHPS}, and many other applications.  
Degree formulas for   generic complete intersection are presented in
Theorem \ref{thm:CI} and Conjecture \ref{conj:surface}.

In Section \ref{sec6} we discuss our  optimization problem
for  Schubert varieties in~${\rm Gr}(2,n)$.  
Conjectures \ref{conj:schubert} and \ref{conj:factor4}
invite our readers to study these
 varieties in projection coordinates.
Formulas for their
ED and GD degrees are given in
Theorem \ref{thm:schubertGD1}
and Conjectures~\ref{conj:schubertED}~and~\ref{conj:schubertGD}.

\section{Squaring Skew-Symmetric Matrices}
\label{sec2}

In this section, we develop foundational linear algebra for the metric algebraic
geometry of the Grassmannian ${\rm Gr}(2,n)$. A point in ${\rm Gr}(2,n)$
is the row span of a $2 \times n$ matrix $A = [a_{ij}]$.
A less ambiguous represention is given by the Pl\"ucker coordinates, which are the entries of
\begin{equation}
\label{eq:lessambi} X = [x_{ij}] \,\, = \,\, A^T 
\begin{small} \begin{bmatrix} \,\,\,\,0 & 1\,\,\, \\ -1 & 0\,\,\, \end{bmatrix} \end{small} A. 
\end{equation}
This is a skew-symmetric $n \times n$ matrix of rank $2$.
Following \cite{DFRS},
the second life of ${\rm Gr}(2,n)$ is the
projection Grassmannian ${\rm pGr}(2,n)$, which consists of 
symmetric  $n \times n$  matrices (\ref{eq:projmatrix}) of rank~$2$.
In what follows we examine this passage from the first life to the second life
from a more general perspective. Namely, we temporarily drop the
hypothesis that $X$ has rank~two.

Taking the square is a rational map
from skew-symmetric matrices to symmetric matrices:
\begin{equation}\label{eq:square}
(-)^2 \,:\, \PP(\wedge^2 \CC^n)\, \dashrightarrow \,\PP({\rm Sym}^2 \CC^n),  \quad\quad X \mapsto X^2.
\end{equation}
This map is rational but  it is not a morphism. Its base locus contains complex matrices like
\begin{equation}\label{eq:base-matrix} \qquad
X \,=\,  \begin{small} \begin{bmatrix}
  0 & 1 & i & 0\,\\
  -1 & 0 & 0 & i \,\\
  -i & 0 & 0 & \!\!-1\,\,\\
  0 & -i & 1 & 0\,
  \end{bmatrix} \end{small} \,\, \in \, {\rm Gr}(2,4) \,\subset \, \PP(\wedge^2 \CC^4).
\end{equation}
We write  $\mathcal{V}_{X^2}$ for the Zariski closure in
 $\PP({\rm Sym}^2 \CC^n)$ of the image of the squaring map $(-)^2$.

\begin{theorem}\label{thm:skew-sym}
  The variety $\mathcal{V}_{X^2}$ has
the  expected dimension $\binom{n}{2}- 1$. It is
   the Zariski closure of the set of symmetric diagonalizable
  matrices whose nonzero eigenvalues have multiplicity~$2$.
  The fiber of $\,(-)^2$ containing
a generic rank $2r$ skew-symmetric $n \times n$ matrix $X$
 has size~$2^{r-1}$. 
\end{theorem}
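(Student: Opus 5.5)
The plan is to work over $\CC$ with the complex orthogonal group $O(n)=\{O: O^TO=I\}$ and a normal form for generic skew-symmetric matrices. Set $J=\begin{small}\begin{bmatrix}0&1\\-1&0\end{bmatrix}\end{small}$. The first step is the following normal form claim:
\[
X \,=\, O\,\mathrm{diag}(a_1J,\dots,a_rJ,0,\dots,0)\,O^T
\]
for a Zariski dense set of skew-symmetric $X$ of rank $2r$, with distinct $a_k^2\neq 0$ and $O\in O(n)$.

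To prove the claim, take such an $X$. Its nonzero eigenvalues are $\pm\sqrt{-1}\,a_k$. The two eigenvectors $v,\bar v$ for $\pm\sqrt{-1}\,a_k$ satisfy $v^Tv=0$ (since $v^TXv=0$) and $v^T\bar v\ne 0$. Hence the planes $E_k=\mathrm{span}(v,\bar v)$ are nondegenerate for the bilinear form $u^Tw$, and they are mutually orthogonal. The kernel $K$ of $X$ is the orthogonal complement of $\bigoplus E_k$, so it is nondegenerate generically. Choosing orthonormal bases of each $E_k$ and of $K$ gives $O$. This normal form is the main technical point: unlike the real case, one must avoid isotropic eigenvectors, and I will argue that these conditions define a nonempty Zariski open set, since they hold at real skew matrices.

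With the normal form in hand, $X^2=O\,\mathrm{diag}(-a_1^2I_2,\dots,-a_r^2I_2,0)\,O^T$. Such a matrix is symmetric and diagonalizable, with nonzero eigenvalues of multiplicity $2$. Conversely, take a symmetric matrix $Y=O\,\mathrm{diag}(b_1I_2,\dots,b_mI_2,0)\,O^T$ that is orthogonally diagonalizable. This is the generic situation for diagonalizable symmetric matrices, again because generic eigenspaces are nondegenerate. Then $Y$ has the square root $O\,\mathrm{diag}(\sqrt{-b_1}J,\dots)\,O^T$. Taking closures gives the description of $\mathcal V_{X^2}$, with $m=\lfloor n/2\rfloor$.

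For the dimension, I parametrize the image as orthogonal conjugates of $\mathrm{diag}(b_kI_2)$. Its dimension is $m+\dim O(n)-\dim\bigl(O(2)^m\times O(n-2m)\bigr)=m+\binom n2-m-0=\binom n2$, since $n-2m\in\{0,1\}$. Projectivizing gives $\binom n2-1$. Alternatively, the finiteness of generic fibers established below already shows that the map is generically finite, hence dominant onto a variety of dimension $\binom n2-1$.

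For the fibers, fix a generic $X$ of rank $2r$ in normal form and set $P=X^2$. In $\PP(\wedge^2\CC^n)$, a matrix $X'$ lies in the fiber iff $X'^2=cP$ for some $c\neq0$, and after rescaling $X'$ we may take $c=1$. Since $X'$ commutes with $P$, it preserves each eigenspace $E_k$ and the kernel $K$. The restriction of $X'$ to the nondegenerate plane $E_k$ is skew with respect to the form, so in an orthonormal basis it equals $tJ$ with $t^2=a_k^2$. This gives exactly two choices $t=\pm a_k$.

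On $K$, the restriction $X'|_K$ is skew with square zero. Restricting to the rank-$2r$ locus, which is the one relevant for the generic fiber, the blocks on the $E_k$ already contribute rank $2r$, so $X'|_K=0$. (When $\dim K\ge 4$, a nonzero isotropic square-zero part is possible in principle; this is why I phrase the fiber within the stratum of rank $2r$ matrices.) We are left with $2^r$ sign patterns. The global sign $X'\mapsto -X'$ is trivial projectively, so the fiber has size $2^{r-1}$.
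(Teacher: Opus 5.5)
Your proposal is correct and takes the same route as the paper. You bring a generic rank-$2r$ matrix to the ${\rm O}(n)$-normal form with $2\times 2$ blocks $a_kJ$, square it to obtain $\mathrm{diag}(-a_1^2I_2,\dots,-a_r^2I_2,0)$, and count the sign choices $\pm a_k$ modulo a global sign.

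You fill in several steps that the paper leaves implicit or only cites:
\begin{itemize}
\item You prove the normal form directly instead of citing Gantmacher. The nondegeneracy of the planes $E_k$ is in fact automatic, since mutually orthogonal eigenspaces that sum to $\CC^n$ must each be nondegenerate, so no separate genericity argument is needed there.
\item You use commutation with $P$ to show that every preimage has this block shape, which the paper only asserts.
\item You observe that the count $2^{r-1}$ must be read inside the rank-$2r$ locus: for $n-2r\geq 4$ one can add a nonzero square-zero skew block on the kernel. This matches how the paper later uses the result on ${\rm Sec}_r({\rm Gr}(2,n))$.
\end{itemize}
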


The proof rests on the conjugation action of the orthogonal group ${\rm O}(n)$ 
on the space~$\wedge^2 \CC^n$.
Normal forms for the orbits 
are given in \cite{DRZ, gantmacher}.
The eigenvalues can be read off of the normal form in
Gantmacher's book \cite[\S XI.4]{gantmacher}. We prefer it for that reason. 
The nonzero eigenvalues of skew-symmetric matrices come in pairs $\pm \lambda$.
This is clear from the elementary divisors.

\begin{lemma}[{\cite[Theorem 7]{gantmacher}}]
  If $\lambda \neq 0$ and $(z - \lambda)^p$ is an elementary divisor of a skew-symmetric matrix $X$, then so is $(z + \lambda)^p$. Moreover, even powers of $z$ come in pairs. 
\end{lemma}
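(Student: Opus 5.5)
The plan is to use two facts separately: $X$ is similar to its transpose, which gives the $\pm\lambda$ pairing, and $X$ is skew-adjoint for the symmetric bilinear form $B(u,v)=u^Tv$ on $\CC^n$, which gives the parity statement for powers of $z$.

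\emph{First claim.} Every square matrix is similar to its transpose, and $X^T=-X$, so $X$ is similar to $-X$. The elementary divisors of $-X$ are obtained from those of $X$ by substituting $z\mapsto -z$. Up to sign, $(z-\lambda)^p$ becomes $(z+\lambda)^p$. Hence the multiset of elementary divisors of $X$ is invariant under $\lambda\mapsto-\lambda$. For $\lambda\neq 0$ the values $\lambda$ and $-\lambda$ are distinct, so $(z-\lambda)^p$ and $(z+\lambda)^p$ occur with the same multiplicity.

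\emph{Second claim: setup.} The identity $B(Xu,v)=-B(u,Xv)$ holds because $X^T=-X$. Let $V_\alpha$ denote the generalized eigenspace of $X$ for $\alpha$.
\begin{itemize}
\item A standard argument, applying $(X-\alpha)^N$ and moving it across $B$ as $(-X-\alpha)^N$, shows that $V_\alpha$ and $V_\beta$ are $B$-orthogonal unless $\alpha+\beta=0$.
\item Since $B$ is nondegenerate on $\CC^n$, its restriction to $V_0$ is nondegenerate.
\end{itemize}
We may therefore replace $X$ by the nilpotent skew-adjoint operator $N=X|_{V_0}$ on $(V_0,B)$.

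\emph{Second claim: the forms $B_k$.} For each $k\ge1$ consider the quotient
\[
W_k \,=\, \ker N^k \big/ \bigl(\ker N^{k-1} + N\ker N^{k+1}\bigr).
\]
Its dimension is the number of Jordan blocks of size exactly $k$, which is the number of elementary divisors $z^k$. On $W_k$ define
\[
B_k(u,v)\,=\,B(u,N^{k-1}v).
\]
Skew-adjointness gives $B_k(u,v)=(-1)^{k-1}B_k(v,u)$. For even $k$, $B_k$ is therefore alternating. If it is nondegenerate, then $\dim W_k$ is even, so $z^k$ occurs an even number of times.

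\emph{Main obstacle.} The work is in checking that $B_k$ is well defined and nondegenerate on $W_k$.
\begin{itemize}
\item \emph{Well-definedness.} If $v\in\ker N^{k-1}$, then $N^{k-1}v=0$. If $v=Nw$ with $w\in\ker N^{k+1}$, then $B(u,N^kw)=\pm B(N^ku,w)=0$ because $u\in\ker N^k$. The same argument works in the first slot by symmetry.
\item \emph{Nondegeneracy.} Suppose $B(u,N^{k-1}v)=0$ for all $v\in\ker N^k$. I would show $u\in \ker N^{k-1}+N\ker N^{k+1}$ using
\[
(\operatorname{im}N^{j})^{\perp}=\ker N^{j},
\]
which follows from skew-adjointness together with nondegeneracy of $B$ on $V_0$, and comparing dimensions.
\end{itemize}
If this orthogonal-complement bookkeeping becomes messy, I would instead argue by induction on $\dim V_0$. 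Pick a Jordan chain of maximal length $m$, with top vector $u$. Find $v$ with $B(u,N^{m-1}v)\neq0$. Show that the span of the chains through $u$ and $v$ is $B$-nondegenerate and $N$-stable, and split it off. For even $m$ these two chains are forced to be distinct, because the form $B(x,N^{m-1}y)$ is alternating, so $z^m$ blocks are removed in pairs. The induction then applies to the $B$-orthogonal complement.
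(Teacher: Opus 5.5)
Your proof is correct. It is also genuinely self-contained, whereas the paper gives no argument and simply cites Gantmacher's Theorem~7.

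The $\pm\lambda$ pairing via $X\sim X^T=-X$ is the standard argument. For the parity of the divisors $z^k$ with $k$ even, you set up bilinear-form machinery: orthogonality of generalized eigenspaces, reduction to the nilpotent part $N$ on $V_0$, and the forms $B_k$ on $W_k$.

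The nondegeneracy step you left as bookkeeping does close. We have $N^{k-1}\ker N^k=\ker N\cap\operatorname{im}N^{k-1}$, and its $B$-orthogonal in $V_0$ is $\operatorname{im}N+\ker N^{k-1}$. Intersecting with $\ker N^k$ gives $\ker N^{k-1}+(\ker N^k\cap\operatorname{im}N)=\ker N^{k-1}+N\ker N^{k+1}$. So the radical of $B_k$ on $\ker N^k$ is exactly the subspace you quotient by.

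A much shorter route exists for this part. The number of elementary divisors $z^k$ equals $\operatorname{rank}X^{k-1}-2\operatorname{rank}X^{k}+\operatorname{rank}X^{k+1}$; the nonzero eigenvalues contribute the same amount to each rank and cancel. For even $k$, both $X^{k-1}$ and $X^{k+1}$ are odd powers of a skew-symmetric matrix, hence skew-symmetric and of even rank. This needs neither the eigenspace decomposition nor any nondegeneracy check.

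What your heavier approach buys is structure. The nondegenerate forms on the $W_k$ are alternating for even $k$ and symmetric for odd $k$. They are exactly the ingredients for proving that the Jordan type determines the ${\rm O}(n)$-orbit of a nilpotent skew-symmetric matrix. That is the kind of normal-form information the paper relies on in Section~\ref{sec2}.

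One caveat concerns only your fallback induction. When the maximal chain length $m$ is odd, splitting off the span of the chains through $u$ and an arbitrary $v$ with $B(u,N^{m-1}v)\neq0$ can give a degenerate subspace. For example, take $N=0$ on $\CC^3$, $u=(1,0,0)$ and $v=(1,1,i)$. In that case you should instead choose $u$ with $B(u,N^{m-1}u)\neq0$, which exists because the symmetric form $B(x,N^{m-1}y)$ is nonzero, and split off that single chain.
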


The normal form uses $2p \times 2p$ blocks for the pair of elementary divisors $(z \pm \lambda)^p$, and $q\times q$ matrices for elementary divisors $z^q$ with $q$ odd. 
A generic rank $2r$ skew-symmetric $n \times n$ matrix $X $ has
$2r + 1$ distinct eigenvalues $\pm \lambda_1i, \ldots, \pm \lambda_ri, 0$ or, if  $n = 2r$, it has $2r$ eigenvalues $\pm \lambda_1i, \ldots, \pm \lambda_ri$.
Its normal form
is a direct sum of $2 \times 2$ blocks and $1 \times 1$ zero-blocks: 
\begin{equation}
\label{eq:twobytwo} X \,\,= \,\,\begin{bmatrix} 0 & \lambda_1 \, \\ - \lambda_1 & 0 \, \end{bmatrix} \,\oplus \,
 \begin{bmatrix} 0 & \lambda_2 \, \\ - \lambda_2 & 0 \, \end{bmatrix} \,\oplus \cdots \oplus
 \, \begin{bmatrix} 0 & \lambda_r \, \\ - \lambda_r & 0 \, \end{bmatrix} 
\, \oplus \, [0]\, \oplus \, \cdots \, \oplus [0]. 
 \end{equation}
The example in \eqref{eq:base-matrix} is a skew-symmetric matrix that cannot be put in the form \eqref{eq:twobytwo}.
In fact, it is already in  Gantmacher's normal form.  Compare this with the
Jordan canonical form.

\begin{proof}[Proof of Theorem \ref{thm:skew-sym}]
The squaring map commutes with the ${\rm O}(n)$-action.
Given a  generic point of rank $2r$ in the domain, we consider its normal form
(\ref{eq:twobytwo}). Its image under \eqref{eq:square} is
\begin{equation*}
\quad - X^2 \,\,= \,\, 
\begin{bmatrix} \lambda_1^2 & 0 \, \\ 0  & \lambda_1^2\, \end{bmatrix} \,\oplus \,
\begin{bmatrix} \lambda_2^2 & 0 \, \\ 0  & \lambda_2^2\, \end{bmatrix} \,\oplus \,\cdots \,\oplus \,
\begin{bmatrix} \lambda_r^2 & 0 \, \\ 0  & \lambda_r^2\, \end{bmatrix} \, \oplus \, [0]\, \oplus \, \cdots \, \oplus [0].
\end{equation*}
Hence $P = X^2$ has $r + 1$  distinct eigenvalues (or $r$ if $n = 2r$),
and all nonzero eigenvalues have multiplicity $2$.
The fiber over $P$ is given by the $2^r$ matrices which are obtained by
multiplying each $\lambda_i$ with $+1$ or $-1$. This gives rise to
$2^{r-1}$ elements $X$ in $\PP( \wedge^2 \CC^n)$.

Since the set of full rank skew-symmetric matrices with normal form \eqref{eq:twobytwo} is Zariski dense in the
domain $\PP(\wedge^2 \mathbb C^n)$, the image of this set 
is Zariski dense in the image variety $\mathcal{V}_{X^2}$.
\end{proof}

Over the real numbers $\RR$, every symmetric $n \times n$ matrix is diagonalizable.
It is equivalent to a diagonal matrix under the ${\rm O}(n)$-action. This is not true
over the complex numbers $\CC$. It thus follows from Theorem \ref{thm:skew-sym}
that the complex variety $\mathcal{V}_{X^2}$ is 
the Zariski closure of the set of real symmetric matrices whose nonzero eigenvalues come in pairs.
The varieties in $\PP({\rm Sym}^2 \RR^n) $ given by prescribed eigenvalue multiplicities 
were studied  by Weinstein in \cite{weinstein}. 

\begin{corollary}
  The image of the real locus $\PP(\wedge^2\mathbb R^n)$ under the  map 
(\ref{eq:square}) 
  consists~of negative semidefinite matrices $P \in {\rm Sym}^2 \RR^n$
    whose nonzero eigenvalues have even multiplicity. 
    If such a matrix $P$ has maximal rank then every complex matrix $X \in \wedge^2 \CC^n$ with $X^2 = P$ is~real.
\end{corollary}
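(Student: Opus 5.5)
The plan has two parts: a real normal form for the image statement, and a commutation argument for the fibers. Throughout I use that a real skew-symmetric matrix is normal, so it is orthogonally diagonalizable over $\RR$ into blocks. Concretely, for $X\in\wedge^2\RR^n$ there is a real $g\in{\rm O}(n)$ with $gXg^T$ of the form (\ref{eq:twobytwo}) with real $\lambda_i$. This is the real counterpart of Gantmacher's normal form, and over $\RR$ no non-diagonalizable blocks like (\ref{eq:base-matrix}) occur.

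\emph{Image.} First, $X^2=-X^TX$, so $v^TX^2v=-\|Xv\|^2\le 0$ and $P=X^2$ is negative semidefinite. Squaring the real normal form, exactly as in the proof of Theorem~\ref{thm:skew-sym}, gives $gX^2g^T=\bigoplus_i(-\lambda_i^2)I_2\oplus 0$. Hence every nonzero eigenvalue of $P$ appears in $2\times 2$ blocks and so has even multiplicity. Conversely, let $P$ be real, negative semidefinite, with nonzero eigenvalues of even multiplicity. Diagonalize it by a real orthogonal $g$ and group each eigenvalue $-\mu^2$ ($\mu>0$) into pairs. Replacing each pair by $\left[\begin{smallmatrix}0&\mu\\-\mu&0\end{smallmatrix}\right]$ and conjugating back by $g$ produces a real skew-symmetric $X$ with $X^2=P$. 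This proves the first sentence.

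\emph{Fibers.} Let $P$ be real of maximal rank in the image, so $\dim\ker P\le 1$. I will take each nonzero eigenvalue to have multiplicity exactly $2$, the generic situation of Theorem~\ref{thm:skew-sym}; see the caveat below. Suppose $X\in\wedge^2\CC^n$ satisfies $X^2=P$. Then $X$ commutes with $P$, so $X$ preserves every eigenspace of $P$. Choose a real orthonormal eigenbasis of $P$. In that basis $X$ is still skew-symmetric, since the change of basis is real orthogonal, and it is block diagonal with blocks matching the eigenspaces, since the eigenspaces are mutually orthogonal.
\begin{itemize}
\item On the kernel, of dimension $\le 1$, the block of $X$ is a skew-symmetric matrix of size at most $1\times 1$, hence $0$.
\item On each $2$-dimensional eigenspace for $-\mu^2$, the block is $\left[\begin{smallmatrix}0&a\\-a&0\end{smallmatrix}\right]$ with $-a^2=-\mu^2$. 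So $a=\pm\mu$, which is real.
\end{itemize}
Therefore $X$ is real. This also recovers the $2^{r-1}$ count of Theorem~\ref{thm:skew-sym} up to the global sign in $\PP(\wedge^2\CC^n)$.

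\emph{Main obstacle: the multiplicity hypothesis.} The delicate step is the meaning of ``maximal rank'', and here the argument needs care. If an eigenvalue $-\mu^2$ has multiplicity $2m\ge 4$, the block equation $Y^2=-\mu^2 I_{2m}$ with $Y$ skew has non-real complex solutions. For example, take $Y=\mu\, hJ_0h^T$ with $J_0$ the standard complex structure and $h\in{\rm O}(2m,\CC)$ non-real. Likewise, a kernel of dimension $\ge 2$ admits nonzero nilpotent complex skew blocks, such as the $4\times 4$ matrix (\ref{eq:base-matrix}). So I would state and prove the second claim with ``maximal rank'' read as: rank $2\lfloor n/2\rfloor$ and pairwise distinct nonzero eigenvalue pairs, i.e.\ each nonzero eigenvalue of multiplicity exactly two. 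The rank condition alone does not suffice, and I would point out that this extra hypothesis is necessary.
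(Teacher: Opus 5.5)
Your proof is correct, and your caveat is justified: the second sentence of the corollary is false under the rank hypothesis alone. For $n=4$, the matrix
\[
X_t \,=\, \begin{bmatrix} 0 & 1 & t & it \\ -1 & 0 & it & -t \\ -t & -it & 0 & 1 \\ -it & t & -1 & 0 \end{bmatrix}, \qquad t\in\CC,
\]
is skew-symmetric and satisfies $X_t^2=-\bigl(1+t^2+(it)^2\bigr){\rm Id}_4=-{\rm Id}_4$. For $t\neq 0$ no scalar multiple of $X_t$ is real, because its entries $t$ and $it$ have ratio $i$. So $P=-{\rm Id}_4$ has maximal rank (it is the apex of the cone $\mathcal{V}_{X^2}$ for $n=4$), yet its fiber is positive-dimensional and contains non-real points. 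This family also makes your construction $\mu hJ_0h^T$ explicit; there $h$ must lie outside the centralizer of $J_0$.

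The paper's proof breaks in its last sentence. The normal form only gives $X=gNg^T$ with $g\in{\rm O}(n,\CC)$. If $R$ is real orthogonal with $R^TPR=N^2$, then $h=R^Tg$ commutes with $N^2$, so $h$ is block diagonal along the eigenspaces of $N^2$. When these eigenspaces are planes, each $2\times 2$ block satisfies $hJh^T=\det(h)\,J$, and then $X=R\,(hNh^T)\,R^T$ is real. For $N^2=-{\rm Id}_4$, however, $h$ is an arbitrary element of ${\rm O}(4,\CC)$.

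One correction to a side remark. A skew block $Y$ on $\ker P$ with $Y^2=0$ must vanish when it has size $2$ or $3$; this is the emptiness of the base locus for $n=2,3$ noted in Example~\ref{ex:baselocus}. Nonzero blocks such as (\ref{eq:base-matrix}) need size at least $4$. Your eigenspace argument therefore proves realness under the sharp hypothesis that every nonzero eigenvalue of $P$ has multiplicity exactly two and $\dim\ker P\le 3$. Maximal rank is more than the kernel part requires.

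Your route for the fiber statement differs from the paper's. The paper argues through elementary divisors. Linear elementary divisors of $P$ force the nonzero ones of $X$ to be linear, so $X$ has the ${\rm O}(n,\CC)$-normal form (\ref{eq:twobytwo}) with real $\lambda_j$. The paper then asserts that the conjugating matrix can be taken real. You use only that $X$ commutes with $X^2=P$, so $X$ preserves the real, mutually orthogonal eigenspaces of $P$. You then solve the small block equations $Y^2=-\mu^2{\rm Id}_2$ and $Y^2=0$ for skew $Y$ directly.

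Your argument needs nothing beyond the real spectral theorem. It also shows exactly where multiplicity enters: larger blocks admit complex orthogonal conjugates of the standard complex structure. The normal-form argument hides precisely this in its final step. The paper's route has the merit of fitting the orbit framework of Theorems~\ref{thm:skew-sym} and~\ref{thm:baselocus}.

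For the first sentence the two proofs coincide, both using the real orthogonal normal form. You additionally prove the reverse inclusion and negative semidefiniteness explicitly, which the paper leaves implicit.
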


\begin{proof}
  The first claim holds because every real skew-symmetric matrix has normal form \eqref{eq:twobytwo}.
  Consider a real symmetric matrix $P$ of maximal rank in the image of the squaring map.
  All elementary divisors of $P$ are linear.
  If $X^2 = P$, then the nonzero elementary divisors of $X$ are linear as well.
  Since $P$ has at most one $0$ eigenvalue, $X$ has normal form \eqref{eq:twobytwo}. 
  Furthermore, since $P$ is negative semidefinite, the eigenvalues of $X$ are of the form $\pm \lambda_ji$ with $\lambda_j$ real, so $X$ has real normal form.
  The matrix $X$ is obtained from the normal form by conjugating with a real orthogonal matrix that diagonalizes $P$. 
  Hence every matrix in the fiber is real. 
\end{proof}

We now discuss the ideal of the image variety $\mathcal{V}_{X^2}$.
The first non-trivial case is $n=3$.

\begin{example}[$n=3$] 
Here (\ref{eq:square}) is the Veronese embedding of $\PP^2$ into $\PP^5$. 
Thus $\mathcal{V}_{X^2}$ is a surface of degree $4$. Its prime ideal is generated by the
$2 \times 2$ minors of the symmetric~matrix
\begin{equation*}
2 P - {\rm trace}(P) \cdot {\rm Id}_3 \,\,\, = \,\,\,
  2\begin{bmatrix}
  p_{11} & p_{12} & p_{13}\\
  p_{12} & p_{22} & p_{23}\\
  p_{13} & p_{23} & p_{33}
  \end{bmatrix}
  - \,(p_{11} + p_{22} + p_{33}) \begin{bmatrix}
    1 & 0 & 0 \\ 0 & 1 & 0 \\ 0 & 0 & 1
  \end{bmatrix}.
  \end{equation*}
\end{example}

\begin{example}[$n=4$] 
The variety $\mathcal{V}_{X^2}$ has dimension $5$ and degree $6$ in the $\PP^9$
of symmetric $4 \times 4$ matrices $P = [p_{ij}]$. 
Its prime ideal is generated by the $2 \times 2$ minors of the $3 \times 3$ matrix
\begin{equation*}
\begin{bmatrix}
 \,p_{11}-p_{22}-p_{33}+p_{44} &  2 p_{13} + 2 p_{24} & 2 p_{12} - 2 p_{34} \\        
  2 p_{13} - 2 p_{24} & -p_{11}-p_{22}+p_{33}+p_{44} &  2 p_{14} + 2 p_{23}  \\
  2 p_{12} + 2 p_{34}  & -2 p_{14} + 2 p_{23} & - p_{11} + p_{22} - p_{33} + p_{44}\,\,
\end{bmatrix}\!.
\end{equation*}
Hence $\mathcal{V}_{X^2}$ is a cone over the Segre variety $\PP^2 \times \PP^2$.
Its apex is the $ 4 \times 4$ identity matrix.
\end{example}
    
\begin{example}[$n = 5,6,7$]   
The variety $\mathcal{V}_{X^2}$ has dimension $9$ and degree $46$ in the $\PP^{14}$ of symmetric $5 \times 5$ matrices.
Its ideal is minimally generated by $15$ cubics.
For $n=6$, the degree is $92$, and 
its ideal contains $20$ cubics. 
For $n=7$, it has dimension $20$ and degree~$1072$.
\end{example}

\begin{conjecture}
The prime ideal of the variety $\mathcal{V}_{X^2}$ is generated in
degree $\lceil n/2 \rceil$.
\end{conjecture}

The variety of primary interest in this paper is the
Grassmannian ${\rm Gr}(2,n)$, which is the
rank $2$ locus in $\PP(\wedge^2 \CC^n)$.
The rank $2r$ locus is the secant variety
${\rm Sec}_{r}({\rm Gr}(2,n))$. We know from 
Theorem \ref{thm:skew-sym}
that the degree of the map (\ref{eq:square}) is $2^{r-1}$ on this secant variety.
For $r=1$, the image of ${\rm Gr}(2,n)$ is the projective closure of the affine variety
${\rm pGr}(2,n)$ from \cite[Section 5]{DFRS}.
We denote this by $\overline{\rm p}{\rm Gr}(2,n)\subset \PP({\rm Sym}^2\CC^n)$
and we also call it the {\em projection Grassmannian}.

\begin{remark}
The variety
$\overline{\rm p}{\rm Gr}(2,n)$  has dimension
$2n-4$ and degree $2 \binom{2n-4}{n-2}$; see \cite{LY}.
Its prime ideal has $\binom{n+1}{2}$ quadratic generators, namely the entries of
$\,2 P^2  - {\rm trace}(P)\cdot P$. 
For $n \geq 6$, we also need the  $\,3 \times 3$ minors of~$P$.
A general statement appears in Conjecture~\ref{conj:schubert}.
\end{remark}

The singular locus of  $\overline{\rm p}{\rm Gr}(2,n)$  has dimension $n-2$
and degree $2^{n-1}$. It consists of all
traceless symmetric matrices of rank $1$.
This is a complex variety with no real points, namely it is the
image of the Fermat quadric
$V(x_1^2+x_2^2 + \cdots + x_n^2)$ under
the second Veronese~map. This singular locus is the Zariski closure of the image under (\ref{eq:square})
of the ${\rm O}(n)$-orbit of \eqref{eq:nilpotent}.

The map most relevant for our application to distance optimization is the composition
\begin{equation}
\label{eq:composition}
      \PP(\wedge^2\CC^n) \,\dashrightarrow\, \PP({\rm Sym}^2\CC^n)
\,      \dashrightarrow \,{\rm Sym}^2\CC^n, \quad\,
      X \,\mapsto\, X^2 = P, \quad  P \,\mapsto\, \frac{2}{{\rm trace}(P)} P = \tilde P.
    \end{equation}
    The second map is the identity on the affine open chart defined by ${\rm trace}(P) \not =0$.
The base locus of the composition 
 consists of all skew-symmetric matrices $X$ such that ${\rm trace}(X^2) = 0$.
  This is a hypersurface in $\PP(\wedge^2\CC^n)$ which has no real points.
For a complex point see \eqref{eq:nilpotent}.
The first map is (\ref{eq:square}). Its base locus is the set of
 skew-symmetric matrices $X$ with $X^2 = 0$.
  
\begin{theorem} \label{thm:baselocus}
  The ${\rm O}(n)$-orbit of a matrix $B$ with $k$ blocks \eqref{eq:base-matrix} and $r = n - 4k$ blocks 
  $[\,0\,]$ has codimension $4k^2 + 2kr + \binom{r}{2}$.
  The base locus of \eqref{eq:square} is the union of all such orbits. 
  It has two irreducible
  components if $n$ is a multiple of $4$ and is irreducible otherwise.
\end{theorem}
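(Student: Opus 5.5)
The plan is to classify the complex skew-symmetric matrices $X$ with $X^2=0$ up to the ${\rm O}(n)$-action using Gantmacher's normal form, and then to compute orbit dimensions with the standard formula for nilpotent orbits in $\mathfrak{so}(n)$. The component count then follows from the closure order on these orbits.

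\emph{Step 1: classification.} If $X^2=0$, then $X$ is nilpotent, so all its elementary divisors are of the form $z^q$, and $X^2=0$ forces $q\le 2$. By the Lemma from \cite{gantmacher}, the even powers $z^2$ come in pairs. Hence the Jordan type of $X$ is the partition $[2^{2k},1^{r}]$ with $n=4k+r$. In the normal form, each pair $z^2,z^2$ is realized by a $4\times 4$ block, and this block is exactly the matrix \eqref{eq:base-matrix}; one checks directly that its square is $0$. Each $z$ is realized by a block $[\,0\,]$. I will invoke the classical fact (also in \cite[\S XI]{gantmacher}) that two complex skew-symmetric matrices are ${\rm O}(n,\CC)$-conjugate if and only if they are similar. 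It follows that the base locus is the union of the orbits of the matrices $B$ described in the statement, for $0\le k\le \lfloor n/4\rfloor$. Conversely each such $B$ satisfies $B^2=0$, so every such orbit lies in the base locus.

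\emph{Step 2: codimension.} I will use the Collingwood--McGovern formula for the nilpotent orbit $\mathcal{O}_\lambda\subset\mathfrak{so}(n)$:
\[
\dim \mathcal{O}_\lambda \,=\, \tbinom{n}{2} - \tfrac12\Bigl(\sum_i (\lambda^*_i)^2 - \#\{j : \lambda_j \text{ odd}\}\Bigr).
\]
For $\lambda=[2^{2k},1^r]$ the dual partition is $(2k+r,\,2k)$, and $\lambda$ has $r$ odd parts. The codimension in $\wedge^2\CC^n$ is therefore
\[
\tfrac12\bigl((2k+r)^2+4k^2-r\bigr) \,=\, 4k^2+2kr+\tbinom{r}{2}.
\]
Since the orbits are cones, the codimension is the same in $\PP(\wedge^2\CC^n)$. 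As a sanity check, I would verify the formula directly for $k=1$ by computing the centralizer $\{A\in\wedge^2\CC^n : AB=BA\}$ block by block.

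\emph{Step 3: irreducible components.} The codimension decreases as $k$ increases, so the orbit with $k=\lfloor n/4\rfloor$ has the largest dimension. I will use the closure order on nilpotent orbits in $\mathfrak{so}(n)$, which is the dominance order restricted to orthogonal partitions. Under this order $[2^{2k},1^{r}]$ dominates $[2^{2k'},1^{r'}]$ whenever $k\ge k'$. Hence every orbit lies in the closure of the top one, and the base locus is the closure of the top orbit.

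If $4\nmid n$, then $r>0$ and the top partition is not very even. In that case the ${\rm O}(n)$-orbit coincides with a single ${\rm SO}(n)$-orbit. This orbit is irreducible because ${\rm SO}(n)$ is connected, so the base locus is irreducible.

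If $n=4k$, then $r=0$ and $[2^{2k}]$ is very even. Its ${\rm O}(n)$-orbit splits into two ${\rm SO}(n)$-orbits, which are interchanged by any reflection. These two orbits have the same dimension, so neither lies in the closure of the other, and the base locus has exactly two irreducible components.

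I expect Step 3 to be the main obstacle, specifically justifying the very-even splitting and the closure relations. To handle it I would cite the standard results in Collingwood--McGovern. For $n=4$ there is also a direct check: the two components are the two families of isotropic planes $\{X : X \text{ rank } 2,\ {\rm image}(X) \text{ isotropic}\}$ in the two rulings.
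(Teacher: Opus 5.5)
Your proposal is correct. Steps 1 and 3 follow the paper's proof. The paper also reads the classification off Gantmacher's normal form: the only nonzero block squaring to zero is \eqref{eq:base-matrix}. It likewise gets the component count from the closure ordering of nilpotent orbits \cite[Theorem~6.2.5]{CM}.

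The real difference is in Step 2. The paper computes the codimension directly, as the dimension of the stabilizer's Lie algebra $\{X\in\wedge^2\CC^n : [X,B]=0\}$, by splitting $X$ into blocks:
\begin{itemize}
\item $\binom{r}{2}$ parameters for the lower-right skew-symmetric block;
\item $2kr$ parameters for the off-diagonal block, whose columns must lie in the $2k$-dimensional kernel of the $4k\times 4k$ part;
\item $4k+8\binom{k}{2}=4k^2$ parameters for the upper-left block, using that the commutant of \eqref{eq:base-matrix} has dimension $4$ among skew-symmetric and $8$ among all $4\times 4$ matrices.
\end{itemize}
You instead quote the Collingwood--McGovern dimension formula with dual partition $(2k+r,2k)$ and $r$ odd parts. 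That is shorter and shows directly how the answer depends on the partition $[2^{2k},1^r]$. The paper's computation is self-contained, and is essentially the ``sanity check'' you propose, carried out for all $k$.

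Your Step 3 is also more explicit than the paper's one-line citation. It shows why $4\mid n$ matters: the top partition $[2^{n/2}]$ is very even, so its ${\rm O}(n)$-orbit splits into two ${\rm SO}(n)$-orbits of equal dimension that are swapped by a reflection. The paper leaves this implicit.

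One small correction: $k$ should range over $1\le k\le\lfloor n/4\rfloor$. The case $k=0$ gives $B=0$, which is not a point of $\PP(\wedge^2\CC^n)$.
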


\begin{proof}
  The codimension of the orbit of $B$ is equal to the dimension of the stabilizer $\{R \in {\rm SO}(n) : R^T BR = B\}$.
   We compute the dimension of the corresponding Lie algebra, which is
  $\{X \in \wedge^2 \CC^n: [X,B] = 0\}$.
  Let $A$ be the direct sum of $k$ copies of \eqref{eq:base-matrix}. 
  We write skew-symmetric $n \times n$ matrices~as
  $X =     \begin{small} \begin{bmatrix}
      Y & Z\\
      Z^T & W
    \end{bmatrix} \end{small} $
  where $Y$ and $W$ are skew-symmetric and $Z$ is any $4k \times r$  matrix.
  The condition $[X, B] = 0$ is equivalent to $[A, Y]= 0$ and $AZ = 0$.
  We have $\binom r 2$ degrees of freedom to choose $W$.
  Since \eqref{eq:base-matrix} has rank 2, $A$ has nullity $2k$, so we have $2kr$ degrees of freedom to choose $Z$.
  We partition $Y$ into $4 \times 4$ blocks that commute with \eqref{eq:base-matrix}, with skew-symmetric blocks on the diagonal and general blocks on the off-diagonal.
  The space of skew-symmetric and general matrices that commute with \eqref{eq:base-matrix} have dimensions $4$ and $8$ respectively. Thus we have $4k + 8\binom{k}{2}=4k^2$ degrees of freedom to choose $Y$.
  
  The only block in the normal form  \cite[\S XI.4]{gantmacher} squaring to zero is \eqref{eq:base-matrix}; this proves the second claim.
   The closure ordering of nilpotent orbits \cite[Theorem~6.2.5]{CM} implies the last claim.
\end{proof}

\begin{example}\label{ex:baselocus}
Theorem \ref{thm:baselocus} shows
that the base locus of \eqref{eq:square} is empty for $n = 2, 3$.
For $n = 4$, it  is the Zariski closure of the ${\rm O}(4)$-orbit of \eqref{eq:base-matrix}. 
This has two components. Its ideal~is
\begin{small}
\begin{align*}
  \langle x_{14} - x_{23}, x_{13} + x_{24}, x_{12} - x_{34}, x_{23}^2 + x_{24}^2 + x_{34}^2 \rangle \cap
  \langle x_{14} + x_{23}, x_{13} - x_{24}, x_{12} + x_{34}, x_{23}^2 + x_{24}^2 + x_{34}^2 \rangle .
\end{align*}
\end{small}
For $n = 5,6,7$, the base locus  is irreducible. It is the orbit closure of
 \eqref{eq:base-matrix} padded with zeros.
 \end{example}

We are especially interested in the restriction of the base locus to
matrices $X$ of rank $2$.

\begin{corollary} \label{cor:baselocus}
  The base locus of \eqref{eq:square} on
  the Grassmannian ${\rm Gr}(2,n)$  is the  ${\rm O}(n)$-orbit closure
  of the matrix  \eqref{eq:base-matrix} padded with zeros. This is
  irreducible of dimension $2n-7$ for~$n \geq 5$.
   \end{corollary}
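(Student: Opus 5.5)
The plan is to intersect the description of the base locus in Theorem~\ref{thm:baselocus} with the rank condition, and then compute the dimension of the single orbit that survives.

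\textbf{Step 1: only $k=1$ survives.} By Theorem~\ref{thm:baselocus}, the base locus of \eqref{eq:square} is the union of the ${\rm O}(n)$-orbits of the matrices $B$ built from $k$ blocks \eqref{eq:base-matrix} and $r=n-4k$ zero blocks $[\,0\,]$. Each block \eqref{eq:base-matrix} has rank $2$, so $B$ has rank $2k$. Rank is invariant under conjugation by ${\rm O}(n)$, so the orbit of $B$ meets ${\rm Gr}(2,n)$ exactly when $2k \le 2$, that is, $k \le 1$. The case $k=0$ is the zero matrix, which is not a point of projective space. Hence the base locus restricted to ${\rm Gr}(2,n)$ is the orbit, together with its closure, of \eqref{eq:base-matrix} padded with $n-4$ zeros.

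\textbf{Step 2: irreducibility.} The closure of an orbit of an irreducible group is irreducible. ${\rm O}(n)$ has two components, but the closure is already the image of ${\rm SO}(n)$ whenever ${\rm SO}(n)$ and ${\rm O}(n)$ have the same orbit through $B$. For $n\ge 5$ the padding contains a zero block. Conjugating by the reflection ${\rm diag}(1,\dots,1,-1)$ acting on that zero block fixes $B$, so its stabilizer meets both components of ${\rm O}(n)$. Therefore the orbit equals the ${\rm SO}(n)$-orbit and is irreducible.

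For $n=4$ there is no zero block, and this argument fails. Indeed, Example~\ref{ex:baselocus} shows two components in that case, which is why the statement requires $n\ge 5$. Since the closure relations among orbits with different $k$ are irrelevant here (only $k=1$ occurs), nothing further is needed.

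\textbf{Step 3: dimension.} The codimension formula in Theorem~\ref{thm:baselocus} is stated inside $\PP(\wedge^2\CC^n)$. With $k=1$ and $r=n-4$ it gives
\[
4 + 2(n-4) + \binom{n-4}{2}.
\]
This codimension counts the affine orbit in $\wedge^2\CC^n$. Since the orbit is a cone, we have to be careful about whether scaling is included.

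\begin{itemize}
\item The affine orbit has dimension $\binom n2 - 4 - 2(n-4) - \binom{n-4}{2}$, which simplifies to $2n-6$.
\item The nilpotent orbit is stable under scaling by $\CC^*$. Passing to the projectivization removes one dimension, giving $2n-7$.
\end{itemize}

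As a cross-check, consider $n=5$. The formula predicts dimension $3$ inside the $6$-dimensional ${\rm Gr}(2,5)$. Independently, the conditions defining the base locus on ${\rm Gr}(2,n)$ are that the row space of $A$ is isotropic for the symmetric form $\sum x_i^2$. This isotropic Grassmannian has dimension $2(n-2)-3 = 2n-7$, which gives $3$ for $n=5$ and agrees with the formula.

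\textbf{Main obstacle.} The delicate point is exactly this affine-versus-projective bookkeeping in the codimension formula. I would therefore present the isotropic-plane description as the primary dimension argument and use the orbit formula only as a consistency check. In that description, $X^2=0$ for $X=A^TJA$ is equivalent to $AA^T=0$, since $X^2 = A^T J (AA^T) J A$ and $A$ has full rank.
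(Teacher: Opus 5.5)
Your proposal is correct and follows the paper's route, which simply specializes Theorem~\ref{thm:baselocus} to $k=1$, $r=n-4$. The dimension count $\binom{n}{2}-1-\bigl(4+2(n-4)+\binom{n-4}{2}\bigr)=2n-7$ comes out the same whichever way you do the affine/projective bookkeeping, since the orbit is a cone. Everything you add beyond the paper's one-line proof is sound and supplies details the paper leaves implicit: the rank argument forcing $k=1$, the reflection in a zero coordinate showing the ${\rm O}(n)$-orbit is a single ${\rm SO}(n)$-orbit (hence irreducible for $n\ge 5$), and the check via $X^2=0 \Leftrightarrow AA^T=0$ identifying the locus with the isotropic Grassmannian of $2$-planes.
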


\begin{proof}
This follows from Theorem \ref{thm:baselocus}
by restricting to $k=1$ and $r=n-4$.
\end{proof}
   
\begin{remark}
The base locus has codimension $3$ in ${\rm Gr}(2,n)$. Its prime ideal is
minimally generated by the $\binom{n}{4}$ Pl\"ucker quadrics
and the $\binom{n+1}{2}$ entries of $X^2$. Computations 
show that its degree 
equals $\frac{4}{n-2}\binom{2n - 6}{n-3}$
for $n=4,5,\ldots,14$. The first values are $4,8,20,56,168,528,1716$.
\end{remark}

A natural extension of (\ref{eq:square}) is the map which multiplies two skew-symmetric matrices
$X$ and $Y$.
This operation was studied in 1922 by Stenzel  \cite{Ste}.
This was followed up in the linear algebra literature throughout the 20th century.
We refer to the bibliography in~\cite{IF}.
We define $\mathcal{V}_{XY}$,
$\mathcal{V}_{X^2Y^2}$ and $\mathcal{V}_{XY^2X}$
to be the varieties respectively parametrized by all products
$XY$, $X^2Y^2$ and $XY^2X$, where $X,Y \in  \PP(\wedge^2 \CC^n)$.
The first two varieties lie in the space $\PP(\CC^{n \times n})$
of all $n \times n$ matrices, while $\mathcal{V}_{XY^2X} \subset \PP({\rm Sym}^2 \CC^n)$.
 We call $\mathcal{V}_{XY}$ the
{\em Stenzel variety}.
For any of these three varieties 
$\mathcal{V}_{\bullet}$
we write $\mathcal{V}_{\bullet,\leq 2}$ for the
subvariety consisting of matrices of rank at most two.
These are obtained by taking $X$ and $Y$ from
the Grassmannian ${\rm Gr}(2,n)$.

\begin{theorem} \label{thm:stenzel}
  The Stenzel variety $\mathcal V_{XY}$ is the Zariski closure of  diagonalizable $n \times n$ matrices whose nonzero eigenvalues have multiplicity $2$. 
  The varieties $\mathcal{V}_{XY}$ and
  $\mathcal{V}_{X^2 Y^2}$ have dimension $n(n-1) - \lfloor n/2 \rfloor -1$. 
  The varieties  $\mathcal{V}_{XY, \leq 2}$ and
  $\mathcal{V}_{X^2 Y^2,\leq 2}$ have dimension $4n-8$. 
\end{theorem}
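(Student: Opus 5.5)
The plan is to work $\mathrm{GL}(n)$-equivariantly, as the proof of Theorem \ref{thm:skew-sym} worked ${\rm O}(n)$-equivariantly. The key observation is that for $g \in \mathrm{GL}(n)$,
$$ g\,(XY)\,g^{-1} \,=\, (gXg^T)\,(g^{-T}Yg^{-1}),$$
and congruence preserves skew-symmetry. So $\mathcal V_{XY}$ is stable under conjugation by $\mathrm{GL}(n)$.

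\emph{Step 1: $\mathcal V_{XY}$ contains the paired diagonalizable matrices.} Let $J$ be the $2\times 2$ matrix in \eqref{eq:lessambi}, so that $J^2=-{\rm Id}_2$. Then
$$\bigoplus_i \lambda_i{\rm Id}_2 \,(\oplus\, [0]) \,=\, \Bigl(\bigoplus_i \lambda_i J \,(\oplus [0])\Bigr)\Bigl(\bigoplus_i (-J)\,(\oplus [0])\Bigr),$$
and conjugating by $g$ gives every diagonalizable matrix whose nonzero eigenvalues have multiplicity $2$.

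\emph{Step 2: every product $XY$ has paired spectrum.} For $n$ even and $X$ invertible,
$$\det(t\,{\rm Id} - XY) \,=\, \det(X)\,\det(tX^{-1}-Y) \,=\, {\rm Pf}(X)^2\,{\rm Pf}(tX^{-1}-Y)^2,$$
which is a perfect square in $t$. For $n$ odd, I would border $X$ and $Y$ to size $n+1$, or note that $XY$ has a kernel and restrict to the rest, to get $t$ times a square. On the dense set of pairs with distinct roots of ${\rm Pf}(tX^{-1}-Y)$, the product $XY$ has $2$-dimensional eigenspaces. I expect this follows because $X^{-1}$ and $Y$ form a generic skew pencil, which decomposes into $2\times2$ blocks (Kronecker--Stenzel). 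Hence the generic product is diagonalizable with paired eigenvalues, and together with Step 1 this proves the first claim.

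\emph{Step 3: dimension count.} Let $m=\lfloor n/2\rfloor$. The centralizer of $\bigoplus_i\lambda_i{\rm Id}_2\,(\oplus[0])$ has dimension $4m$ (respectively $4m+1$ for $n$ odd). So the locus has affine dimension $n^2-4m+m$ (respectively $n^2-4m-1+m$). Projectivizing gives $n(n-1)-\lfloor n/2\rfloor-1$ in both parities; I checked this arithmetic for both.

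\emph{Step 4: the variety $\mathcal V_{X^2Y^2}$.} Write $X^2Y^2$ in terms of $S=X^2$ and $T=Y^2$, which by Theorem \ref{thm:skew-sym} range over generic symmetric matrices with paired spectrum. For the lower bound, take $S$ and $T$ simultaneously block-diagonalizable by a $\mathrm{GL}(n)$-congruence, i.e.\ $S=gDg^T$ and $T=g^{-T}Eg^{-1}$. Their product $gDEg^{-1}$ then sweeps out all conjugates of paired diagonal matrices, so $\dim \mathcal V_{X^2Y^2}\geq\dim\mathcal V_{XY}$.

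The matching upper bound is the step I expect to be the main obstacle. It requires showing that $\det(t\,{\rm Id}-X^2Y^2)$ is again a square, i.e.\ that $X^2Y^2$ lies in $\mathcal V_{XY}$. I would first try a Pfaffian identity for $\det(t Y^{-2}-X^2)$. Failing that, I would bound the rank of the differential of $(X,Y)\mapsto X^2Y^2$ at an explicit point, e.g.\ by reducing to the case $n=4$ and adding blocks.

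\emph{Step 5: the rank-two parts.} Write $X=A^TJA$ and $Y=B^TJB$ with $A,B$ of size $2\times n$. Then $X$ is determined up to scalar by the row space of $A$, and $Y$ by that of $B$. Likewise $X^2$ is determined up to scalar by the same plane, by \eqref{eq:projmatrix}. For generic planes, the column space of $XY$ (respectively $X^2Y^2$) is ${\rm rowspan}(A)$ and its row space is ${\rm rowspan}(B)$. So the projective product is determined by, and determines, the pair of planes. Hence both $\mathcal V_{XY,\leq2}$ and $\mathcal V_{X^2Y^2,\leq2}$ are birational to ${\rm Gr}(2,n)\times{\rm Gr}(2,n)$, which has dimension $4n-8$.
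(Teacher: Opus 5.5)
\textbf{The gap is Step 4, and it fails in both directions.}

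Your upper-bound plan cannot succeed, because $X^2Y^2$ does not lie in $\mathcal V_{XY}$: its characteristic polynomial is not a square in general. Already for $n=3$, write $X^2=xx^T-(x\cdot x){\rm Id}_3$ and $Y^2=yy^T-(y\cdot y){\rm Id}_3$. Then the eigenvalues of $X^2Y^2$ are $0$, $(x\cdot x)(y\cdot y)$ and $(x\cdot y)^2$. In rank two, $X^2Y^2$ is a multiple of $\tilde P\tilde Q$, whose nonzero eigenvalues $\lambda\neq\mu$ are exactly what the paper's spectral region records. Your fallback, the rank of the differential at one explicit point, only bounds the dimension from below.

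The lower bound is also unjustified. $\mathcal V_{X^2}$ is stable under orthogonal conjugation, not under ${\rm GL}(n)$-congruence, since $(gX_0g^T)^2=gX_0(g^Tg)X_0g^T$. So $gDg^T$ generally has unpaired spectrum and is not of the form $X^2$. The inclusion $\mathcal V_{XY}\subseteq\mathcal V_{X^2Y^2}$ that your construction would give is in fact false for $n=3$. Both are irreducible fourfolds, and every point of $\mathcal V_{XY}$ has characteristic polynomial $t(t-c)^2$, but a generic point of $\mathcal V_{X^2Y^2}$ has three distinct eigenvalues.

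Note that the paper gives no argument for this part either (``We omit the proof''), and the claimed equality looks doubtful beyond $n=3$. For $n=4$, write $X^2=\sigma_2{\rm Id}+(\sigma_1-\sigma_2)\Pi_A$ and $Y^2=\tau_2{\rm Id}+(\tau_1-\tau_2)\Pi_B$, where $\Pi_A,\Pi_B$ are the orthogonal projections onto planes $A,B$ with principal angles $\theta_1,\theta_2$. In the principal-angle splitting $\CC^4=V_1\oplus V_2$ one gets $X^2Y^2=M_1\oplus M_2$ with $\det M_1=\det M_2=\sigma_1\sigma_2\tau_1\tau_2$. The traces and antisymmetric parts of $M_1,M_2$, together with this determinant, are five independent functions of $(\sigma,\tau,\theta_1,\theta_2)$. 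The rotations inside $V_1,V_2$ add two parameters and the splitting adds four. This count indicates affine dimension $11$, i.e.\ $\dim\mathcal V_{X^2Y^2}=10>9$, so $(X,Y)\mapsto X^2Y^2$ would be generically finite on $\PP^5\times\PP^5$. You should not expect to close Step 4 as stated.

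The rest of your proposal is sound.
\begin{itemize}
\item Steps 1--3 are the paper's ${\rm GL}(n)$-orbit and centralizer count. Your Pfaffian identity replaces the appeal to Stenzel. The two-dimensional eigenspaces need no Kronecker theory: $\lambda X^{-1}-Y$ is a singular skew matrix of even size, so its kernel, which is the $\lambda$-eigenspace of $XY$, has dimension at least $2$.
\item Step 5 is correct and cleaner than the paper's orbit count. It also covers $\mathcal V_{X^2Y^2,\leq 2}$, which the paper leaves unproved.
\end{itemize}
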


\begin{proof}
Following Stenzel \cite[Satz VII in Section 4]{Ste},
           the elementary divisors of the product $XY$ come in pairs.
  The generic point in $\mathcal V_{XY}$ is a diagonalizable matrix with nonzero eigenvalues of multiplicity 
  $2$. 
  We derive the dimension of the Stenzel variety $\mathcal V_{XY}$ from this description.
  Consider first the case where $n$ is even. 
  Then $\mathcal V_{XY}$ is the ${\rm GL}(n)$-orbit closure of the set of diagonal matrices ${\rm diag}(\lambda_1, \lambda_1, \ldots, \lambda_{n/2}, \lambda_{n/2})$.
  The group ${\rm GL}(n)$ has dimension $n^2$ and the stabilizer of the action, namely the set of invertible matrices with $2\times2$ blocks, has dimension $4n /2 = 2n$.
  Adding the $ n/ 2$ degrees of freedom for choosing the $\lambda_i$, 
  this set has dimension  $n^2 - 2n + n/2 - 1 = n(n-1) - \lfloor n/2 \rfloor - 1$ in projective space.
  For odd $n$, we take the orbit of ${\rm diag}(\lambda_1, \lambda_1, \ldots, \lambda_{n/2}, \lambda_{n/2}, 0)$.
  The stabilizer now has size $4(n-1)/2 + 1 = 2n-1$, so the projective dimension is $n^2 - (2n - 1) + (n-1)/2 - 1 = n(n-1) - \lfloor n/2 \rfloor - 1$.
  
  The dimension of $\mathcal V_{XY, \leq 2}$ is found in the same way.
  We take the ${\rm GL}(n)$-orbit closure of ${\rm diag}(\lambda_1, \lambda_1, 0, \ldots, 0)$.
  The dimension is $n^2 - 4 - (n-2)^2 + 1 - 1 = 4n - 8$. 
Finally, $\mathcal V_{XY}$ and $\mathcal V_{X^2Y^2}$ have the same dimension,
as do their restrictions to rank $2$. We omit the proof.
\end{proof}

\begin{example}[$n=3$]
 The Stenzel variety $\mathcal{V}_{XY}$ has dimension four, and it is the Segre variety $\PP^2 \times \PP^2 \subset \PP^8$. Its prime
   ideal is generated by the $2 \times 2$ minors of the matrix
 $$ 2 P - {\rm trace}(P) \cdot {\rm Id}_3 \,\, = \,\,\begin{bmatrix}
\,p_{11}-p_{22}-p_{33} &  2 p_{12} & 2 p_{13} \\
 2 p_{21} & -p_{11}+p_{22}-p_{33} & 2 p_{23} \\ 
 2 p_{31} &  2 p_{32} & -p_{11}-p_{22}+p_{33}\,
 \end{bmatrix}\!.
 $$
 The fourfold $\mathcal{V}_{X^2Y^2}$ has degree $20$ in $\PP^8$,
 with ideal  generated by $10$ cubics and $13$ quartics.
 \end{example}

\begin{example}[$n=4$]
The Stenzel variety $\mathcal{V}_{XY}$ has dimension $9$ and degree $14$ in $\PP^{15}$.
Its ideal is generated by $15$ quadrics, which look like the
$4 \times 4$ Pfaffians of a skew-symmetric $6 \times 6$ matrix of linear forms.
Its subvariety $\mathcal{V}_{XY,\leq 2}$ has dimension $8$
and degree $28$, with prime ideal generated by $16$ quadrics. Both varieties are
arithmetically Gorenstein.

The varieties $\mathcal{V}_{X^2Y^2}$ and $\mathcal{V}_{X^2 Y^2,\leq 2}$ have
the same dimensions as above, namely $9$ and $8$ respectively. The latter
is defined by $28$ cubics and $16$ quartics, and it has degree $196$.
\end{example}

The variety $\mathcal{V}_{XY^2X}$ arises because
the symmetric matrix $XY^2X$ has the same eigenvalues
as the non-symmetric matrix $X^2 Y^2$. These eigenvalues
 are real whenever $X$ and $Y$ are real. 
 
 \begin{example}
   Suppose $n \geq 5$.
   If $n$ is odd then
    $\mathcal{V}_{XY^2X}$ is the hypersurface defined by the symmetric determinant.   
  For even $n$, it is the ambient space $\PP({\rm Sym}^2\CC^n)$.
  The variety $\mathcal V_{XY^2X, \leq 2}$ 
  consists of $n\times n$ matrices of rank $\leq 2$; it has dimension $2n - 2$ and degree $\frac 1 2 \binom{2n - 2}{n-1}$.  
For $n = 4$,
the dimension is one less than expected.
Here, $\mathcal{V}_{XY^2X}$ is a hypersurface
in the  $\PP^9$ of symmetric $4 \times 4$ matrices.
Its defining polynomial has degree $6$ and $260$ terms. 
\end{example}

\section{Critical Equations}
\label{sec3}

Our main goal is to solve the optimization problem (\ref{eq:optproblem}).
We seek exact solutions,  to be reached by algebraic methods.
This rests on polynomial equations for the critical points.
We now introduce these
critical equations, both for implicit models and parametric models.
In this section, the data  $Q$ is a generic symmetric matrix, and the number of critical points is the ED degree of $\mathcal{M}$.
In Section~\ref{sec5}, the data will come from the projection Grassmannian.

The model $\mathcal{M}$ is an irreducible subvariety of ${\rm Gr}(2,n)$, which 
   is defined over $\RR$ and whose real points are Zariski dense.
   Its ideal  is generated by polynomials in the
   Pl\"ucker coordinates:
   \begin{equation}
   \label{eq:idealofmodel}
I_\mathcal{M}\, \, = \,\,
\langle f_1,f_2,\ldots,f_r \rangle \,+\,I_{{\rm Gr}(2,n)}\,\,\subset \, \,
\RR\bigl[ \, x_{ij}: 1 \!\leq\! i\! <\! j\! \leq\! n \,\bigr] .
\end{equation}   
Here $I_{{\rm Gr}(2,n)}$ is the ideal generated by the
      $\binom{n}{4}$  Pl\"ucker quadrics (\ref{eq:pluckerquadric}).
   We write $c = {\rm codim}(\mathcal{M})$.
    
Since the objective function factors through the squaring map
(\ref{eq:square}), we can also work with the
variety $\mathcal{M}^2$ in the projection Grassmannian $\overline{\rm p}{\rm Gr}(2,n)$.
In this setting, the prime ideal of the model is generated by polynomials
in the $\binom{n+1}{2}$ entries $p_{ij}$ of a symmetric $n \times n$ matrix.

In practice, many models $\mathcal{M}$ are unirational: they are given 
parametrically.
  Ideally, this parametrization
will be birational. In this case, we consider the critical equations
in the ring of polynomials in these parameters.
We illustrate this by  introducing our running example.

\begin{example}[Chow threefold of the twisted cubic]
Let $\mathcal{M} \subset {\rm Gr}(2,4)$ be parametrized by
\begin{equation}
\label{eq:chowtcc} A \, = \, \begin{bmatrix} 
\,1 & t_1 & t_1^2 & t_1^3\, \,\\
\,0 & 1 & t_2 & t_3\, \,\end{bmatrix}.  \quad
\end{equation}
The first row defines the twisted cubic curve in $\PP^3$,
and $\mathcal{M}$ is the threefold of all lines that
intersect this curve.
The Chow form is the  defining equation of $\mathcal{M}$ in Pl\"ucker coordinates: 
\begin{equation}
\label{eq:bezout} \qquad
{\rm det} 
\begin{small} \begin{bmatrix}
\, x_{12} & x_{13} & x_{23} \,\\
\, x_{13} & x_{14}+x_{23} & x_{24}\, \\
\, x_{23} & x_{24} & x_{34} \,
 \end{bmatrix} \,\, = \,\, 0.
 \end{small}
\end{equation}
The ideal of $\mathcal{M}^2$ is more complicated. It has $16$ minimal generators,
starting with the
$10$ quadrics that define $\overline{\rm p}{\rm Gr}(2,n)$ in $\PP( {\rm Sym}^2 \CC^4)$.
In addition, we have six more quartics, like
$$ \begin{footnotesize} \begin{matrix}
 2 p_{23}^2 p_{24} p_{33} - 2 p_{22} p_{24} p_{33}^2 - 2 p_{23}^3 p_{34} + 2 p_{14} p_{22} p_{24} p_{34}
 - 2 p_{13} p_{23} p_{24} p_{34} - 2 p_{12} p_{24}^2 p_{34} + 2 p_{22} p_{23} p_{33} p_{34} \\ + 2 p_{12} p_{24} p_{33} p_{34} 
 - 4 p_{13} p_{22} p_{34}^2 + 4 p_{12} p_{23} p_{34}^2
 - 2 p_{23}^2 p_{34}^2 + p_{11} p_{24} p_{34}^2 + p_{22} p_{24} p_{34}^2 - p_{11} p_{33} p_{34}^2 \\
 + p_{22} p_{33} p_{34}^2 - p_{24} p_{33} p_{34}^2 - p_{33}^2 p_{34}^2 + 2 p_{23} p_{34}^3 
 - 2 p_{34}^4 - 2 p_{14} p_{22} p_{23} p_{44} + 2 p_{13} p_{23}^2 p_{44}  
 + 2 p_{12} p_{23} p_{24} p_{44} \\ + 4 p_{13} p_{22} p_{33} p_{44}
 - 6 p_{12} p_{23} p_{33} p_{44} + 2 p_{23}^2 p_{33} p_{44} - p_{11} p_{24} p_{33} p_{44}
 - p_{22} p_{24} p_{33} p_{44} + p_{11} p_{33}^2 p_{44} \\
 - p_{22} p_{33}^2 p_{44} + p_{24} p_{33}^2 p_{44} + p_{33}^3 p_{44} - 2 p_{23} p_{33} p_{34} p_{44}
 + p_{24} p_{34}^2 p_{44} + 3 p_{33} p_{34}^2 p_{44} - p_{24} p_{33} p_{44}^2 - p_{33}^2 p_{44}^2.
\end{matrix} \end{footnotesize}
$$
For our optimization problem, it is better to use the cubic (\ref{eq:bezout}) or the parametrization (\ref{eq:chowtcc}).
\end{example}

While our objective function $||\tilde P - Q||^2$ is quadratic in the entries of $\tilde P$, the relations $\tilde P^2 = \tilde P$ and ${\rm trace}(\tilde P) = 2$ may be used to rewrite this function so that it is {\em linear} in $\tilde P$:
\begin{align*}
  {\rm trace}((\tilde P-Q)^2) \,=\, {\rm trace}(\tilde P^2) - 2\,{\rm trace}(\tilde PQ) + {\rm trace}(Q^2) 
  \,=\, 2 + {\rm trace}(Q^2) - 2\,{\rm trace}(\tilde PQ). 
\end{align*}
This allows us to replace \eqref{eq:optproblem} with an equivalent linear optimization problem
over the model:
\begin{equation}
\label{eq:optproblem2} 
  \textrm{Maximize} \quad {\rm trace}(\tilde PQ) \quad \textrm{subject to} \quad \tilde P \in \mathcal M^2 \,\subseteq \, {\rm pGr}(2,n). 
\end{equation}
If $\mathcal{M}^2$ equals  ${\rm pGr}(2,n)$,
then all $\binom{n}{2}$ critical points are real.
They are the projections onto the 2-dimensional $Q$-invariant subspaces of $\mathbb R^n$; see \cite[Theorem 3.3]{FH} and \cite[Section 4]{LLY}.

Written in Pl\"ucker coordinates, our objective function in (\ref{eq:optproblem2}) is the rational function
\begin{equation}
\label{eq:objectiveX} \phi_Q (X) \,\, := \,\,  2 \,\frac{{\rm trace}(X^2Q)}{{\rm trace}(X^2)}. 
\end{equation}
Let $\mathcal{J}_{\mathcal{M}}(X)$ denote the Jacobian matrix of the model $\mathcal{M}$.
This $\big(r \!+\! \binom{n}{4} \bigr)
 \times \binom{n}{2}$ matrix has rows indexed by
 the generators of $I_\mathcal{M}$,
  and columns indexed by the variables $x_{ij}$. 
The entries are the partial derivatives.
The rank of  $\mathcal{J}_{\mathcal{M}}(X)$ 
at a generic point $X \in \mathcal{M}$ is $c = {\rm codim}(\mathcal{M})$.

The singular locus of the model $\mathcal{M}$ is the subvariety 
of $ \PP(\wedge^2 \CC^n)$ defined by the ideal
$$ I_{{\rm Sing}(\mathcal{M})} \,\, := \,\,  I_\mathcal{M} \, \, + \,\, \bigl\langle \,
c \times c \,\, \hbox{minors of}\,\,\, \mathcal{J}_\mathcal{M}(X)\, \bigr\rangle. $$
The {\em augmented Jacobian matrix}  $\mathcal{J}^Q_\mathcal{M}(X)$
 is defined to be the $\bigl(r \!+\! \binom{n}{4} \!+\!2 \bigr) \times \binom{n}{2}$ matrix
\begin{equation}
\label{eq:augmentedJ}
  \mathcal J_{\mathcal M}^Q(X)\,\, = \,\,
  \left [\begin{array}{cccc}
    (XQ+QX)_{12} & (XQ+QX)_{13} & \cdots & (XQ+QX)_{n-1,n}\\
    x_{12} & x_{13} & \cdots & x_{n-1,n}\\
    \multicolumn{4}{c}{\mathcal J_{\mathcal M}(X)}
  \end{array}
  \right ].
\end{equation}
Up to scaling, the first row in (\ref{eq:augmentedJ}) is the gradient of the numerator ${\rm trace}(X^2Q)$ in
\eqref{eq:objectiveX}.
The second row is the gradient of the denominator ${\rm trace}(X^2)$.
For a model $\mathcal M$, the corresponding {\em Lagrangian ideal} \cite[Definition 2.5]{ranestad2025}
with respect to the data $Q$ is the following saturation:
\begin{equation}
\label{eq:lagrangianideal}
I^Q_{\mathcal M} \,\,= \,\,
 \biggl( I_\mathcal{M} \, \, + \,\,
\bigl\langle \,
(c\!+\!2) \times (c\!+\!2)\,\, \hbox{minors of}\,\,\, \mathcal{J}^Q_{\mathcal M}(X)\, \bigr\rangle \biggr) \,:\, I_{\rm Sing}(\mathcal M)^\infty. \qquad
\end{equation}
The model $\mathcal M$ is {\em ED-general} if it intersects the hypersurface $\{{\rm trace}(X^2)=0\}$ transversely. In this case, $I_{\mathcal M}^Q$ is also the critical ideal.
In any case,
the {\em critical ideal} is $I_{\mathcal M}^Q:\langle {\rm trace}(X^2) \rangle^\infty$.

\begin{proposition} \label{prop:EDideal}
For a general symmetric $n \times n$ matrix $Q$, the variety of the critical ideal is finite.
 It consists precisely of all non-singular points of $\mathcal{M}$
at which the objective function $\phi_Q(X)$ is critical. Their number is the ED degree of
the image under (\ref{eq:composition})
of the variety $\mathcal{M}$.
\end{proposition}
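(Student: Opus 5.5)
We first identify the zeros of the maximal minors of the augmented Jacobian with critical points of $\phi_Q$. Then we show finiteness for generic $Q$ by an incidence-variety argument, and finally match the count with the ED degree of the image $\mathcal{M}^2$ under (\ref{eq:composition}).

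\emph{Step 1 (Lagrange condition).} Let $X$ be a smooth point of $\mathcal{M}$ with ${\rm trace}(X^2)\neq 0$. The row space of $\mathcal{J}_\mathcal{M}(X)$ has rank $c$ and is the conormal space of the affine cone over $\mathcal{M}$ at $X$. The gradient of the numerator of (\ref{eq:objectiveX}) is proportional to the first row of (\ref{eq:augmentedJ}), and the gradient of the denominator to the second row. By the quotient rule, $d\phi_Q(X)$ vanishes on $T_X\mathcal{M}$ if and only if
\[
{\rm trace}(X^2)\,\nabla{\rm trace}(X^2Q) \;-\; {\rm trace}(X^2Q)\,\nabla{\rm trace}(X^2)
\]
lies in the row space of $\mathcal{J}_\mathcal{M}(X)$. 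Now use Euler's relation. The vector $X$ is tangent to the cone, so $X$ pairs to zero with the rows of $\mathcal{J}_\mathcal{M}(X)$. It pairs with the first two rows to $2\,{\rm trace}(X^2Q)$ and $2\,{\rm trace}(X^2)$, up to sign conventions. Hence the two top rows together with $\mathcal{J}_\mathcal{M}(X)$ have rank $\le c+1$ exactly when the displayed combination lies in the conormal space, which is exactly criticality. So, off ${\rm Sing}(\mathcal{M})$ and off $\{{\rm trace}(X^2)=0\}$, the vanishing of the $(c+2)\times(c+2)$ minors is equivalent to criticality. The saturations in (\ref{eq:lagrangianideal}) and in the critical ideal remove precisely these two bad loci. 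This proves the second sentence, assuming that saturation removes all components supported there, which is a standard fact for saturation.

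\emph{Step 2 (finiteness).} Form the incidence correspondence of pairs $(X,Q)$ with $X\in\mathcal{M}_{\rm reg}$, ${\rm trace}(X^2)\neq0$, and $X$ critical for $\phi_Q$. For fixed $X$, the Lagrange condition is linear in $Q$. Indeed, $Q\mapsto (XQ+QX)$ restricted to the tangent directions gives a linear map ${\rm Sym}^2\CC^n\to T_X^*$, and criticality asks that its image modulo the Euler direction vanish. This imposes $\dim\mathcal{M}$ independent linear conditions on $Q$, provided this linear map is surjective. Surjectivity holds because the tangent vectors $\dot X$ give linear forms $Q\mapsto{\rm trace}((X\dot X+\dot X X)Q)$, which are independent as long as the differential of the squaring map is injective on $T_X\mathcal{M}$ modulo $X$. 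This is where I expect the main obstacle. The differential of $X\mapsto X^2$ degenerates on the base locus of Theorem \ref{thm:baselocus} and Corollary \ref{cor:baselocus}, and at points of positive-dimensional fibers. At a point with ${\rm trace}(X^2)\neq 0$ of rank $2$, the squaring map is injective and immersive onto $\overline{\rm p}{\rm Gr}(2,n)$, since the fiber size is $2^{r-1}=1$ by Theorem \ref{thm:skew-sym}. I would verify immersivity directly from the normal form (\ref{eq:twobytwo}) using ${\rm O}(n)$-equivariance. Granting this, the incidence variety is a vector bundle over an open subset of $\mathcal{M}$, of dimension $\dim\mathcal{M}+\binom{n+1}{2}-\dim\mathcal{M}$. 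So its projection to the $Q$-space has finite generic fibers, and the critical points for generic $Q$ form a finite set.

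\emph{Step 3 (ED degree).} The map (\ref{eq:composition}) restricted to $\mathcal{M}\setminus\{{\rm trace}(X^2)=0\}$ is an isomorphism onto its image $\widetilde{\mathcal{M}}$ inside the affine variety ${\rm pGr}(2,n)$, by Step 2. Smooth points correspond to smooth points. On $\widetilde{\mathcal{M}}$ we have ${\rm trace}(\tilde P^2)=2$, so the squared distance $\|\tilde P-Q\|^2$ differs from $-2\,{\rm trace}(\tilde PQ)=-\phi_Q(X)$ by a constant, as computed before (\ref{eq:optproblem2}). Thus critical points of $\phi_Q$ on $\mathcal{M}_{\rm reg}$ correspond bijectively to critical points of the Euclidean distance function from $Q$ on $\widetilde{\mathcal{M}}_{\rm reg}$. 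For generic $Q$, the number of the latter is by definition the ED degree of $\widetilde{\mathcal{M}}$ \cite{DHOST}. The points of $\widetilde{\mathcal{M}}$ lying over the base locus or at infinity are excluded on both sides.
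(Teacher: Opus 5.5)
Your proposal is correct and follows the paper's argument: an incidence correspondence as in \cite[Lemma 2.1]{DHOST}, using that the critical conditions are linear in $Q$ so that the fiber over each smooth point has dimension $c+n+1$, together with saturation by $I_{{\rm Sing}(\mathcal M)}$ and by ${\rm trace}(X^2)$. You are more explicit than the paper in verifying the rank condition via Euler's relation and in reducing the fiber-dimension claim to immersivity of $X\mapsto\tilde P$ off $\{{\rm trace}(X^2)=0\}$. Note that this immersivity does not follow from the generic fiber size $2^{r-1}=1$. It follows because every rank-$2$ matrix $X=u\wedge v$ with ${\rm trace}(X^2)\neq 0$ spans a plane on which $\sum z_i^2$ is nondegenerate, so $X$ is ${\rm O}(n)$-conjugate to $\lambda(e_1\wedge e_2)$, where the differential is visibly injective.
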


\begin{proof}
The proof is analogous to \cite[Lemma 2.1]{DHOST}. 
The key point is that $\phi_Q(X)$ and the entries in the first row of $\mathcal{J}^Q_{\mathcal{M}}(X)$
are linear  in the data $Q$. Thus,
for any non-singular point $X$ in $\mathcal{M}$, the set of data $Q$ 
for which the $(c+2) \times (c+2)$ minors of $\mathcal{J}^Q_{\mathcal{M}}(X)$ vanish
form an affine-linear subspace of dimension $c+n+1$. This ensures that the
incidence correspondence  in $\mathcal{M} \times \PP({\rm Sym}^2 \CC^n)$
defined by these minors has the correct dimension.
Its map onto the second factor is finite-to-one.
The saturation by $I_{{\rm Sing}(\mathcal{M})}$ is as in \cite[Lemma 2.1]{DHOST}.
We also saturate by ${\rm trace}(X^2)$ to remove possible
non-transverse intersections with the isotropic quadric.
The last sentence makes precise the sense in which we use ``ED degree'' in this paper.
\end{proof}

Models in the Grassmannian are never ED-general,
unless they are curves or surfaces:

\begin{theorem} \label{thm:EDgeneral}
Consider any subvariety $\mathcal M \subseteq {\rm Gr}(2,n)$.
    If ${\rm dim}(\mathcal M) \geq 3$, then $\mathcal M$ is not ED-general.
    If ${\rm dim}(\mathcal M) \leq 2$ and $\mathcal M$ is defined by general polynomials $f_i$, then $\mathcal M$ is ED-general.
\end{theorem}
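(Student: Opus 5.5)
The plan is to locate the non-transversality of $\mathcal M$ with the isotropic quadric $H=\{{\rm trace}(X^2)=0\}$ at points of $\mathcal M$ lying in the base locus of the squaring map.

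For $X\in{\rm Gr}(2,n)$, the gradient of ${\rm trace}(X^2)$ is, up to scaling, the vector $(x_{ij})$, which is the second row of (\ref{eq:augmentedJ}). Transversality at a point $X\in\mathcal M\cap H$ means that this gradient, restricted to the tangent space $T_X\mathcal M$, is nonzero. In other words, the linear form $dX\mapsto{\rm trace}(X\,dX)$ should not vanish on $T_X\mathcal M$.

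The key observation concerns points $X\in{\rm Gr}(2,n)$ with $X^2=0$, i.e. the base locus of Corollary~\ref{cor:baselocus}. At such a point, every tangent vector of ${\rm Gr}(2,n)$ has the form $dX=MX+XM^T$ with $M$ arbitrary, since $X=A^TJA$ and we may vary $A$. Then
$$
{\rm trace}(X\,dX)\,=\,{\rm trace}(XMX)+{\rm trace}(XXM^T)\,=\,2\,{\rm trace}(X^2M)\,=\,0 .
$$
So the whole tangent space of the Grassmannian, and hence of $\mathcal M$, lies in $T_XH$. Every smooth point of $\mathcal M$ in the base locus is therefore a point of non-transverse intersection. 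Singular points of $\mathcal M$ in the base locus also violate transversality, so it is enough to show that $\mathcal M$ meets the base locus at all.

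By Corollary~\ref{cor:baselocus} and the remark following it, the base locus has codimension $3$ in ${\rm Gr}(2,n)$. It is a projective subvariety of the irreducible projective variety ${\rm Gr}(2,n)$. If ${\rm dim}\,\mathcal M\geq 3$, the intersection is nonempty. Concretely, the dimension bound holds because ${\rm Gr}(2,n)$ is homogeneous, or because the class of the base locus is a nonzero effective class of codimension $3$ and its product with $[\mathcal M]$ has positive degree. Since ${\rm Gr}(2,n)$ is not a projective space, I would argue via the Chow ring: the product of effective Schubert-positive classes is nonzero in the appropriate range. Alternatively, I would use Kleiman-type homogeneity under ${\rm GL}(n)$. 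This intersection step is the main obstacle, because the base locus is only ${\rm O}(n)$-invariant, not ${\rm GL}(n)$-invariant.

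The route I would try first is cleaner. Every $\mathcal M$ of dimension $\geq 3$ meets the hyperplane sections, and the base locus is cut out inside ${\rm Gr}(2,n)$ by the conditions $X^2=0$. Set-theoretically this is the locus of lines $\ell=\mathrm{span}(u,v)$ that are totally isotropic for the quadric $q=\sum x_i^2$, i.e. lines contained in the Fermat quadric. The Fano variety of lines on a quadric has class $c_3$ of $({\rm Sym}^2 S^\vee)$, a nonzero positive class (the top Chern class of a globally generated bundle of rank $3$). Intersecting it with $\mathcal M$ of dimension $\geq 3$ is nonempty, because restricted to $\mathcal M$ it is the zero locus of a section $q|_\ell$ of the globally generated rank-$3$ bundle ${\rm Sym}^2S^\vee|_{\mathcal M}$. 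Its top Chern class on a dimension $\geq 3$ variety is nonzero, since it is ample-type positive: ${\rm Sym}^2 S^\vee$ is globally generated with $c_3\neq 0$, and pairing with $\sigma_1^{\dim\mathcal M-3}$ gives the positive number of lines in $\mathcal M$ that are isotropic and lie in a general linear section.

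For ${\rm dim}\,\mathcal M\leq 2$ with general $f_i$, I would apply a Bertini argument. Generic $\mathcal M$ avoids the codimension-$3$ base locus. Away from it, $H\cap{\rm Gr}(2,n)$ is smooth, because there the gradient ${\rm trace}(X\,dX)$ is nonzero on $T_X{\rm Gr}$ by the formula $2\,{\rm trace}(X^2M)$ with $X^2\neq0$. A general complete intersection of sufficiently general hypersurfaces then meets the smooth variety $H\cap{\rm Gr}$ transversally, by Bertini's theorem.
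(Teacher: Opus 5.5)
Your outline matches the paper's proof. Tangency is forced at points of $B={\rm Gr}(2,n)\cap\{X^2=0\}$, $B$ has codimension $3$, and a general model of dimension at most $2$ misses $B$ and is then handled by Bertini. Your identity ${\rm trace}(X\,dX)=2\,{\rm trace}(X^2M)$ for $dX=MX+XM^T$ is correct. It is a nice self-contained substitute for the Kraft--Procesi citation, since it shows directly that ${\rm Gr}(2,n)\cap\{{\rm trace}(X^2)=0\}$ is singular exactly along $B$. Your treatment of the case ${\rm dim}\,\mathcal M\le 2$ is fine.

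The gap is the step ``${\rm dim}\,\mathcal M\geq 3$ implies $\mathcal M\cap B\neq\emptyset$.'' It cannot be closed, because it is false for arbitrary $\mathcal M$ once $n\geq 5$. Neither homogeneity nor effectivity helps: Kleiman's theorem only gives proper intersection with a generic translate, and products of effective classes in the Chow ring of ${\rm Gr}(2,n)$ can vanish. Your own Chern class computation exhibits this. With Chern roots $a,b$ of $S^\vee$ one gets $[B]=c_3({\rm Sym}^2S^\vee)=4ab(a+b)=4\sigma_{2,1}$, and Pieri gives $\sigma_{2,1}\cdot\sigma_{n-2}=0$.

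Concretely, let $\mathcal M=\mathcal S_{1n}$ be the lines through $e_n$. Since $q(e_n)=1\neq0$, no such line is isotropic, so $\mathcal M\cap B=\emptyset$. Equivalently, ${\rm Sym}^2S^\vee|_{\mathcal M}$ has a trivial quotient on which $q$ induces the nonzero constant $q(e_n)$. Hence $c_3$ vanishes on $\mathcal M$, and your claimed positive pairing is $0$. Worse, $\mathcal S_{1n}\cong\PP^{n-2}$ is linear in the coordinates $x_{1n},\dots,x_{n-1,n}$, and ${\rm trace}(X^2)=-2\sum_{i<n}x_{in}^2$ restricts to a smooth quadric. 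So $\mathcal S_{1n}$ is ED-general of dimension $n-2\geq 3$.

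The paper's proof asserts the same intersection claim without justification, so the first sentence of the theorem needs an extra hypothesis. For instance, suppose $\mathcal M={\rm Gr}(2,n)\cap V(f_1,\dots,f_r)$ with $r={\rm codim}\,\mathcal M$, e.g.\ for general $f_i$ as in the second half. Then $\mathcal M\cap B=B\cap V(f_1,\dots,f_r)\neq\emptyset$ by the projective dimension theorem, since ${\rm dim}\,\mathcal M\ge 3$ means exactly $r\leq 2n-7={\rm dim}\,B$. This also covers $n=4$. More generally, your Chern class argument is valid whenever $[\mathcal M]\cdot\sigma_{2,1}\cdot\sigma_1^{\dim\mathcal M-3}\neq 0$.
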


\begin{proof}
The intersection ${\rm Gr}(2,n) \cap \{{\rm trace}(X^2) = 0\}$ is the ${\rm O}(n)$-orbit closure of
\begin{equation}\label{eq:nilpotent}
    \begin{bmatrix}
        0&  1 + i & 0\\
        -1 -i & 0 & -1+i\\
        0 & 1-i& 0
    \end{bmatrix}
    \oplus 
    \begin{bmatrix}
        0
    \end{bmatrix}
    \oplus \cdots \oplus \begin{bmatrix}
        0
    \end{bmatrix}.
\end{equation}
It is also the union of the ${\rm O}(n)$-orbits of \eqref{eq:base-matrix} and \eqref{eq:nilpotent}. 
It follows from \cite[Theorem 2']{KP} that the singular locus of this union is the ${\rm O}(n)$-orbit closure of \eqref{eq:base-matrix}.
Corollary \ref{cor:baselocus} states that this singular locus equals ${\rm Gr}(2,n) \cap \{ X^2 = 0\}$ and has codimension $3$
in ${\rm Gr}(2,n)$. 
If ${\rm dim}(\mathcal{M}) \geq 3$
then $\mathcal{M}$ intersects the base locus and
is hence tangent to the isotropic quadric.
If ${\rm dim}(\mathcal{M}) \leq 2$ then this
intersection is empty, unless the model $\mathcal{M}$ is very special.
\end{proof}

\begin{example}[$n=4$]
    The tangent curves in Proposition \ref{prop:tangentcurve} fail to be ED-general.
\end{example}

Theorem~\ref{thm:EDgeneral} allows us to
derive formulas for the ED degrees of generic curves and surfaces in 
the Grassmannian ${\rm Gr}(2,n)$. This will
be carried out in Theorem~\ref{thm:CI} for the case $n=4$.

We now return to our running example. This is a threefold
and hence not ED-general.

\begin{example}[Chow threefold of the twisted cubic]
Let $f$ denote the B\'ezout determinant in \eqref{eq:bezout}.
The augmented Jacobian of the threefold $\mathcal{M}$ it defines
in ${\rm Gr}(2,4)$ is the $4\! \times \!6$ matrix
  \begin{align*}
  \mathcal{J}^Q_{\mathcal{M}}(X)\,\,=\,\,
    \begin{bmatrix}
        \frac{\partial {\rm trace}(X^2Q)}{\partial x_{12}} & \frac{\partial {\rm trace}(X^2Q)}{\partial x_{13}} &\frac{\partial {\rm trace}(X^2Q)}{\partial x_{14}} & \frac{\partial {\rm trace}(X^2Q)}{\partial x_{23}}&\frac{\partial {\rm trace}(X^2Q)}{\partial x_{24}} & \frac{\partial {\rm trace}(X^2Q)}{\partial x_{34}}
        \smallskip \\     
    x_{12} & x_{13} & x_{14} & x_{23} & x_{24} & x_{34}\\
    x_{34} & \!\!\!-x_{24} & x_{23} & x_{14} & \!\!\!-x_{13} & x_{12} \smallskip \\
    \frac{\partial f}{ \partial x_{12}} & \frac{\partial f}{ \partial x_{13}} & \frac{\partial f}{ \partial x_{14}}&
    \frac{\partial f}{ \partial x_{23}} & \frac{\partial f}{ \partial x_{24}} & \frac{\partial f}{ \partial x_{34}}
    \end{bmatrix}.
  \end{align*}
  This matrix has linear entries in rows $1,2,$ and $3$, and quadratic entries in row $4$.
   Computation of the ideal (\ref{eq:lagrangianideal}), and saturation by ${\rm trace}(X^2)$,
   shows that this model  has ED degree~$42$.
\end{example}

The computation of critical equations is easier and faster when
the model $\mathcal{M}$ is given by a birational parametrization.
This usually takes the form of a $2 \times n$ matrix $A(t)$ which
depends on $s$ parameters $t = (t_1,t_2,\ldots,t_s)$,
and $t$ is identifiable from  the rowspan of $A(t)$.

The matrix $X= X(t)$ is now a function of $t$, and hence so is
$\phi_Q(X(t))$.
The aim is to compute critical points of the rational function $\phi_Q(X(t))$ with no constraints on $t$. 
We step back and compute the number of critical points of a general rational function 
in $s$ variables.

\begin{theorem} \label{thm:rtlfct}
    If $f(t_1, \ldots, t_s)$ and $\,g(t_1, \ldots, t_s)$ are general inhomogeneous polynomials of degrees
     $d$ and $e$, respectively, then the rational function $h(t) = f(t)/g(t)$ has 
        $\,\frac{  d(d-1)^s-e(e-1)^s}{d - e}$ 
      complex critical points if $d \neq e$, and it has $(s+1)(d-1)^s$ 
      complex critical points if $d = e$.
\end{theorem}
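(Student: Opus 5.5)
The plan is to linearize the critical equations with an auxiliary variable $\lambda$ standing for the value of $h$, and then count the solutions by an intersection computation on a compactification. On the open set $\{g \neq 0\}$ the condition $\nabla h = 0$ is $g\nabla f - f \nabla g = 0$. Setting $\lambda = f/g$, this is equivalent to the square system of $s+1$ equations in $(t,\lambda) \in \CC^{s}\times\CC$
$$ \partial_i f(t) - \lambda\, \partial_i g(t) \,=\, 0 \quad (i=1,\ldots,s), \qquad f(t) - \lambda\, g(t) \,=\, 0 .$$
Solutions with $g=0$ would force $f=0$ and $\nabla f \in \CC\nabla g$, i.e.\ $V(f)\cap V(g)$ would be singular. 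For general $f,g$ this intersection is smooth of codimension $2$, so no such solutions exist. Geometrically, the critical points are the points where a member $\{f-\lambda g=0\}$ of the pencil is singular, counted over all $\lambda$. So I must count the singular members of the pencil $f - \lambda g$ in affine $s$-space, where the equations are reduced and genericity makes all solutions nondegenerate.

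First I would treat the case $d=e$. Homogenize $t$ with $t_0$ and $\lambda$ as $[\lambda_0:\lambda_1]$. The equations then have bidegrees $(d-1,1)$ ($s$ times) and $(d,1)$ on $\PP^s\times\PP^1$. The multihomogeneous B\'ezout number is the coefficient of $a^sb$ in $((d-1)a+b)^s(da+b)$, namely $(d-1)^s + sd(d-1)^{s-1}$. I then subtract the solutions at $t_0=0$. By Euler's relation these are the critical points in $\PP^{s-1}$ of the ratio $f_d/g_d$ of the top-degree forms, i.e.\ the singular members of a general pencil of degree-$d$ hypersurfaces in $\PP^{s-1}$. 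Their number is the classical $s(d-1)^{s-1}$, and each is a reduced point for general data. The difference is $(d-1)^{s-1}(d-1+sd-s) = (s+1)(d-1)^s$, as claimed.

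For $d \neq e$, say $d>e$, the same compactification fails. Multiplying $g$ by $t_0^{d-e}$ creates an excess component $\{t_0=0\}\times\{[0:1]\}$, so naive B\'ezout must be corrected. I would avoid excess intersection by a degeneration. Replace $g$ by $g_\epsilon = g + \epsilon\, \ell^{\,d-e} g'$, where $\ell$ is a general linear form and $g'$ is general of degree $e$. Alternatively, use the weighted approach: compute the signed Euler characteristic $(-1)^s\bigl(\chi(\CC^s\setminus V(g)) - \chi(F)\bigr)$, where $F$ is a general fibre of $h$. This formula holds once one checks that $h$ has no critical points at infinity for general $f,g$. The Euler characteristics of complements of general affine hypersurfaces and of their intersections are standard, computable from the generating functions $\chi(V_d\subset\CC^s)$. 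The target $\frac{d(d-1)^s-e(e-1)^s}{d-e} = \sum_{k=0}^{s} \bigl(\ldots\bigr)$ is the kind of symmetric expression such a computation yields, and the limit $e\to d$ recovers $(s+1)(d-1)^s$, which serves as a consistency check.

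The main obstacle is exactly the behaviour at infinity when $d\neq e$. One must prove that for general $f,g$ the map $h$ is "tame": no critical points escape to infinity, and no atypical fibres arise from the base locus $V(f)\cap V(g)$ or the hyperplane at infinity beyond what the Euler characteristic bookkeeping accounts for. I would verify this with the explicit top-degree forms $f_d, g_e$ in general position. As a fallback, I would compute the BKK mixed volume of the $(t,\lambda)$ system, whose Newton polytopes are simple to describe, and check genericity of the facial systems.
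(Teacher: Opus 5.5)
Your case $d=e$ is correct and is essentially the paper's argument. Your bihomogeneous B\'ezout number $(d-1)^s+sd(d-1)^{s-1}$ on $\PP^s\times\PP^1$ is exactly the Giambelli--Thom--Porteous sum $\sum_k(-1)^{s-k}\binom{s}{k}h_k(d,e)$ evaluated at $e=d$. Your subtraction of the $s(d-1)^{s-1}$ singular members of the pencil $f_d-\lambda g_d$ on $\PP^{s-1}$, justified by Euler's relation, is the paper's correction.

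The gap is the case $d\neq e$, which is the main content of the statement: there you never derive the number. You list three strategies and assert that the target is ``the kind of symmetric expression such a computation yields,'' but each strategy stops before its essential step.
\begin{itemize}
\item The degeneration $g_\epsilon=g+\epsilon\ell^{d-e}g'$ only moves the problem. You would still have to count how many of the $(s+1)(d-1)^s$ critical points escape to infinity as $\epsilon\to 0$, and you do not.
\item The formula $(-1)^s\bigl(\chi(\CC^s\setminus V(g))-\chi(F)\bigr)$ requires a tameness statement, and that is a genuine condition. For $d<e$ it fails: already for $s=1$ the value $0$ is atypical at infinity, and the formula returns $2e-1$ instead of $d+e-1$. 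For $d>e$ you neither prove tameness nor compute $\chi(F)=\chi(V(f-cg))-\chi(V(f)\cap V(g))$.
\item The BKK route is only named.
\end{itemize}

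The missing idea is the right compactification. Your $(t,\lambda)$-system lives naturally on the $\PP^1$-bundle $\PP(\mathcal{O}(d)\oplus\mathcal{O}(e))$ over $\PP^s$, in which $\lambda$ has weight $d-e$, not on $\PP^s\times\PP^1$. This bundle is trivial exactly when $d=e$, which is why your construction worked only there.

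Pushing forward to $\PP^s$, i.e.\ eliminating $\lambda$, turns the count into a degeneracy locus. View the homogenized matrix $\begin{bmatrix} f & \nabla f\\ g & \nabla g\end{bmatrix}$ as a map $\mathcal{O}\oplus\mathcal{O}(1)^s\to\mathcal{O}(d)\oplus\mathcal{O}(e)$; you need the locus where it has rank at most one. Porteous gives its class as $\sum_{k=0}^s(-1)^{s-k}\binom{s}{k}h_k(d,e)$. Since $h_k(d,e)=(d^{k+1}-e^{k+1})/(d-e)$, this collapses to $\frac{d(d-1)^s-e(e-1)^s}{d-e}$. This is the paper's proof.

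No correction at infinity is needed when $d\neq e$. At $t_0=0$, with $V(g_e)$ smooth, rank at most one means $(f_d,\nabla f_d)=c\,(g_e,\nabla g_e)$. Euler's relation then gives $d f_d=ce\,g_e$, so $c(d-e)g_e=0$. Since $\nabla f_d$ does not vanish on $\PP^{s-1}$, we have $c\neq 0$, which forces $f_d=g_e=0$ with $\nabla f_d\parallel\nabla g_e$. That would be a singular point of $V(f_d)\cap V(g_e)$, and general forms have none. When $d=e$ the factor $d-e$ vanishes, and this is exactly where your pencil correction comes from.
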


\begin{proof}
    The critical ideal is generated by the $2 \times 2$ minors of the matrix
    \begin{equation}
        \label{eq:fgmatrix}
        \begin{bmatrix}
       \, f & \frac{\partial f}{\partial t_1} &  \frac{\partial f}{\partial t_2}& \cdots & \frac{\partial f}{\partial t_s}\,
        \smallskip \\
       \, g & \frac{\partial g}{\partial t_1} &  \frac{\partial g}{\partial t_2}& \cdots & \frac{\partial g}{\partial t_s}\,\\
        \end{bmatrix}.
    \end{equation}
    For $d \neq e$, the Giambelli-Thom-Porteous formula 
    says that the degree of this ideal equals
     \begin{equation}\label{eq:GPT}
    \sum_{k = 0}^s (-1)^{s-k}\binom{s}{k}h_k(d,e).
    \end{equation}
    Here $h_k$ is the homogeneous symmetric function of degree $k$. 
    This simplifies to $\frac{e(e-1)^s \,-\, d(d-1)^s}{e \,-\, d}$.
    
    When $d = e$, a correction is needed because the matrix
    obtained from (\ref{eq:fgmatrix}) by taking the entries' leading forms
        drops rank at some points on the
    hyperplane at infinity in $\PP^{s-1}$. 
This follows from Euler's relation, which states that
$(-d,t_1,t_2,\ldots,t_s)^T$ is in the kernel of that matrix of
homogeneous polynomials.
Applying Giambelli-Thom-Porteous to the last $s$ columns
of that matrix, we see that the number of these solutions at
infinity is $s(d-1)^{s-1}$. We subtract this number
    from \eqref{eq:GPT} evaluated at $e = d$.
    The result simplifies to $(s + 1)(d-1)^s$. 
\end{proof}
\noindent 

We now return to our optimization problem,
where  $\mathcal{M}$ is parametrized by polynomials in $\RR[t_1,t_2,\ldots,t_s]$.
The critical ideal is generated by the $2 \times 2$ minors of the matrix
\begin{align*}
    \begin{bmatrix}
    {\rm trace}(X(t)^2 Q) &
        \frac{\partial {\rm trace}(X(t)^2Q)}{\partial t_1} & \frac{\partial {\rm trace}(X(t)^2Q)}{\partial t_2} & \cdots &  \frac{\partial {\rm trace}(X(t)^2Q)}{\partial t_s} \smallskip \\ 
        {\rm trace}(X(t)^2) &
        \frac{\partial {\rm trace}(X(t)^2)}{\partial t_1} & \frac{\partial {\rm trace}(X(t)^2)}{\partial t_2} & \cdots &  \frac{\partial {\rm trace}(X(t)^2)}{\partial t_s}
    \end{bmatrix}.
\end{align*}
By applying Theorem \ref{thm:rtlfct} to our situation, we obtain the following bound
on the ED degree.

\begin{corollary}\label{cor:param-ed}
    Suppose that the model $\mathcal M$ is parametrized by the $2 \times n$ matrix
$$
A(t) \,\, = \,\,
    \begin{bmatrix}
        a_{11}(t) & a_{12}(t) & \cdots & a_{1n}(t)\\
        a_{21}(t) & a_{22}(t) & \cdots & a_{2n}(t)
    \end{bmatrix}
    $$
    and that $t = (t_1,\ldots,t_s)$ is identifiable from the rowspan of $A(t)$. 
    If $d$ is the maximal degree among the $2 \times 2$ minors of $A(t)$, then the ED degree of $\mathcal M$ is at most $(s+1)(2d-1)^s$.
\end{corollary}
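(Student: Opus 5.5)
The plan is to reduce the statement to Theorem~\ref{thm:rtlfct} with $d=e$ equal to $2d$, and then argue that the critical points of the model are among the critical points counted there. By identifiability, the parametrization $t \mapsto {\rm rowspan}(A(t))$ is birational onto $\mathcal M$. Hence the critical points of $\phi_Q$ on the smooth locus of $\mathcal M$, away from ${\rm trace}(X^2)=0$, correspond bijectively to critical points of the rational function $h(t)=\phi_Q(X(t))$. These are taken on the open set where the differential of the parametrization is injective. By Proposition~\ref{prop:EDideal}, the ED degree counts exactly these points for generic $Q$.

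Write $X(t) = A(t)^T J A(t)$. Its entries are the $2\times 2$ minors of $A(t)$, up to sign, so they are polynomials of degree at most $d$. Then
$$f(t) \,=\, {\rm trace}(X(t)^2 Q)\qquad\text{and}\qquad g(t) \,=\, {\rm trace}(X(t)^2)$$
are polynomials of degree at most $2d$ in $s$ variables. The critical points of $h=f/g$ (with $g\neq 0$) are the points where the $2\times 2$ minors of the matrix \eqref{eq:fgmatrix} vanish, which is the matrix displayed just before the corollary. If $f$ and $g$ were general of degree $2d$, Theorem~\ref{thm:rtlfct} with $d=e$ replaced by $2d$ would give exactly $(s+1)(2d-1)^s$ critical points.

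The main obstacle is that $f$ and $g$ are not general. They are quadratic in the minors, and $Q$ is generic but enters only linearly. I would handle this with an upper semicontinuity argument rather than an exact count. Homogenize with $t_0$ and view the degeneracy locus of the $2\times(s+1)$ matrix of the homogenized $(f,\partial f/\partial t_i)$ and $(g,\partial g/\partial t_i)$ in $\PP^s$. By Euler's relation, this is equivalent to the rank-$\le 1$ locus of the $2\times (s+1)$ matrix of partial derivatives of the homogenized $F,G$ with respect to $t_0,\dots,t_s$, all of degree $2d-1$.

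The isolated points of such a degeneracy locus are bounded by the Porteous/Bézout count. For two forms of degree $2d$, the locus $\{\lambda\nabla F + \mu \nabla G=0\}$ is cut out by $s+1$ equations of bidegree $(1,2d-1)$ on $\PP^1\times\PP^s$. Its isolated solutions number at most the coefficient of $a\,b^{s}$ in $(a+(2d-1)b)^{s+1}$, which is $(s+1)(2d-1)^s$. This bound holds for the number of isolated solutions of any specialization, by the refined Bézout theorem (or by taking a general deformation of $F,G$ and using conservation of number for isolated points).

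Finally, the genuine affine critical points with $g\ne 0$, at which the parametrization is immersive, form a finite set for generic $Q$ by Proposition~\ref{prop:EDideal}. Each such point gives a pair $[\lambda:\mu]$ with $\lambda\nabla F+\mu\nabla G=0$, namely $(g,-f)$ up to scaling, with $t_0\neq 0$. Being finite in the fiber over generic $Q$, these points are isolated components of the solution set. Any extra positive-dimensional components, for instance from base points or points at infinity, can only decrease the count of isolated points. Hence the ED degree is at most $(s+1)(2d-1)^s$. The delicate part is verifying isolatedness in the multiprojective system, and there I would rely on the finiteness in Proposition~\ref{prop:EDideal}.
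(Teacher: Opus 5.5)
Your proposal is correct, and its skeleton matches the paper's argument. The entries of $X(t)$ are the $2\times 2$ minors of $A(t)$, so the numerator and denominator of $\phi_Q(X(t))$ have degree at most $2d$, and the bound is the count of Theorem~\ref{thm:rtlfct} with $d=e$ replaced by $2d$.

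The difference lies in how that count is justified. The paper simply invokes Theorem~\ref{thm:rtlfct}, whose count is for \emph{general} $f,g$. It tacitly treats this as an upper bound for the very special pair ${\rm trace}(X(t)^2Q)$, ${\rm trace}(X(t)^2)$. You make that semicontinuity step explicit:
\begin{itemize}
\item Via Euler's relation, you rewrite the critical equations as $\lambda\nabla F+\mu\nabla G=0$ on $\PP^1\times\PP^s$.
\item There the multihomogeneous B\'ezout number is exactly $(s+1)(2d-1)^s$, the degree of the discriminant of a pencil of forms of degree $2d$. So neither the Porteous computation nor the correction at infinity from the proof of Theorem~\ref{thm:rtlfct} is needed.
\item Refined B\'ezout, or deformation to general $F,G$, then bounds the isolated solutions of any specialization.
\end{itemize}
The price is that you must show the genuine critical points are isolated in the special system, and your finiteness argument handles this.

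One point needs tightening. In your first paragraph the correspondence is an injection of ED critical points into critical points of $h$, not a bijection: $h$ may have extra critical points where the parametrization degenerates. The injection also requires that, for generic $Q$, every ED critical point has the form $X(t)$ for some $t$ at which the parametrization is immersive. The points of $\mathcal M$ not of this form lie in a proper closed subset. Generic data have no critical points there, by the incidence-correspondence dimension count behind Proposition~\ref{prop:EDideal}. State this explicitly, since your isolatedness argument depends on it.
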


\begin{example}[$s\leq2,n=4$]
    It is possible for curves and surfaces to attain the bound in Corollary~\ref{cor:param-ed}.  
    Consider a parametric model $\mathcal{M}$ for lines in $3$-space, given by
\begin{align}\label{eq:gen-param}
    t \,\,\mapsto  \,\,
    \begin{bmatrix}
        1 & a_{12}(t) & a_{13}(t) & a_{14}(t)\\
        0 & 1 & a_{23}(t) & a_{24}(t)
    \end{bmatrix},
\end{align}
where $a_{12},a_{13},a_{14}$ are generic polynomials of degree $3$ and $a_{23},a_{24}$ are generic polynomials of degree $1$. The ED degree is $2\cdot 7 = 14$ for $s=1$ and $3 \cdot 7^2 = 147$ for $s=2$.
\end{example}

\begin{example}[$s=3, n=4$]
Consider the threefold defined by \eqref{eq:gen-param} 
where $a_{12},a_{23},a_{24}$ are linear,
$\deg(a_{13}) = 2$, and $\deg(a_{14}) = 3$. 
The rightmost $2 \times 2$ minor  has~the largest degree,
namely $4$, so the upper bound in Corollary~\ref{cor:param-ed} is $4 \cdot 7^3 = 1372$. 
If  the polynomials $a_{12}, \ldots,  a_{24}$ are generic,
then the ED degree of this model is $337$. 
For the Chow threefold of the twisted cubic, with parametrization
(\ref{eq:chowtcc}), the ED degree drops to $42$. Indeed, the bound in Corollary \ref{cor:param-ed} cannot be attained for models of dimension greater than $2$ by Theorem \ref{thm:EDgeneral}.
\end{example}

\section{The Grassmann Distance Degree}
\label{sec4}

Our primary objective is distance optimization inside the Grassmannian.
 The given data is a real
$2 \times n$ matrix $B$ of rank $2$. This represents a line in $\PP^{n-1}$.
From $B$, we compute a skew-symmetric matrix $Y $ as in (\ref{eq:lessambi}),
and we define $\tilde Q$ as the image of $Y$ under the map~(\ref{eq:composition}):
\begin{equation}
\label{eq:specialdata}
 \tilde Q \,\, = \,\, \frac{2}{{\rm trace}(Y^2)}\, Y^2 \qquad {\rm where} \qquad Y \,\, = \,\,B^T                                       
\begin{small} \begin{bmatrix} \,\,\,\,0 & 1\,\,\, \\ -1 & 0\,\,\, \end{bmatrix} \end{small} B.
\end{equation}
Thus, in this section, the matrix $\tilde Q$ is special. It is a
projection matrix in ${\rm pGr}(2,n)$.
As in the ED case, the critical points are 
given by a rank condition on $\mathcal J^{\tilde Q}_{\mathcal M}(X)$ as in \eqref{eq:augmentedJ}. 
However, for data $\tilde Q$ as in (\ref{eq:specialdata}), there are now extraneous 
critical points $X \in {\rm Gr}(2,n)$, where
 $\mathcal J^{\tilde Q}_{\mathcal M}(X)$ drops rank regardless of what the model is. 
These are critical points of the optimization~problem 
\begin{equation}\label{eq:groptproblem}
    \textrm{Maximize }\quad \frac{{\rm trace}(X^2\tilde Q)}{{\rm trace}(X^2)}
     \quad \textrm{subject to } \quad X \in {\rm Gr}(2,n).
\end{equation}
As in Section \ref{sec3}, they are defined by the constraint that the augmented Jacobian matrix $\mathcal J_{{\rm Gr}(2,n)}^{\tilde Q}(X)$ has rank at most ${\rm codim}({\rm Gr}(2,n)) + 1$.
  Here is a geometric  characterization:

\begin{proposition}\label{prop:excesscomponent}
    For general real data $\tilde Q$ in $ {\rm pGr}(2,n)$, the critical points of~\eqref{eq:groptproblem} are
    precisely the $\tilde Q$-invariant subspaces of dimension $2$ in $\mathbb R^n$.
    The dimension of the variety   
    \vspace{-0.4em}\begin{equation}
        \label{eq:extraideal}
        V\bigl(\,I^{\tilde Q}_{{\rm Gr}(2,n)} : \langle{\rm trace}(X^2)\rangle^{\infty} \,\bigr) \, \setminus \{Y\}
\end{equation}
    is $n-2$ for $n = 4,5$, and $2n - 8$ for $n \geq 6$. It is contained in the hypersurface 
    $V({\rm trace}(XY))$.
\end{proposition}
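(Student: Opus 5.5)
The plan is to replace the Plücker formulation by a Rayleigh-quotient formulation on $2\times n$ frames. Then I would use the eigenspace decomposition of the special data $\tilde Q$ to list the critical points explicitly, and read off dimensions and the hypersurface containment.

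\emph{Step 1: the critical equations.} Write $X = B^T J B$ as in (\ref{eq:lessambi}), where $B^T=[a\ b]$ is an $n\times 2$ frame. One has ${\rm trace}(X^2) = -2\det(BB^T)$ and ${\rm trace}(X^2\tilde Q)= -2\det(BB^T)\,{\rm trace}\bigl((BB^T)^{-1}B\tilde QB^T\bigr)$. I would verify these identities by checking them on the normal form (\ref{eq:twobytwo}) and invoking ${\rm O}(n)$-equivariance together with ${\rm GL}_2$-invariance. Hence, on the chart ${\rm trace}(X^2)\neq 0$, the objective of (\ref{eq:groptproblem}) equals
$$\psi(B) \,=\, {\rm trace}\bigl((BB^T)^{-1}B\tilde QB^T\bigr).$$
Here only invertibility of the complex Gram matrix $BB^T$ is used, not positivity. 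Since ${\rm Gr}(2,n)$ is smooth, the saturation in (\ref{eq:extraideal}) by $I_{\rm Sing}$ is vacuous. So the points of (\ref{eq:extraideal}) are exactly the critical points of $\psi$ with $\det(BB^T)\neq 0$, up to closure.

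Differentiate in a direction $C$. With $M=(BB^T)^{-1}B\tilde QB^T$, the derivative is $2\,{\rm trace}\bigl((BB^T)^{-1}(B\tilde Q - MB)C^T\bigr)$. This vanishes for all $C$ if and only if $B\tilde Q = MB$, that is, the row space of $B$ is $\tilde Q$-invariant. This gives the first claim for real points, recovering \cite[Theorem 3.3]{FH}. It also gives the same description for complex points off the isotropic quadric.

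\emph{Step 2: enumerate invariant planes.} Since $\tilde Q$ is a projection of rank $2$, $\mathbb C^n = E_1\oplus E_0$, where $E_1={\rm image}(Y)$ has dimension $2$ and $E_0=\ker\tilde Q$ has dimension $n-2$. These subspaces are orthogonal for the bilinear form $u^Tv$, since $\tilde Q$ is real symmetric. A $\tilde Q$-invariant plane $L$ splits as $(L\cap E_1)\oplus(L\cap E_0)$, so there are three types:
\begin{itemize}
\item $L=E_1$, which is the point $Y$ that is removed;
\item $L=\ell_1\oplus\ell_0$ with $\ell_1\in\PP(E_1)\cong\PP^1$ and $\ell_0\in \PP(E_0)\cong\PP^{n-3}$, a family of dimension $n-2$;
\item $L\subset E_0$, forming ${\rm Gr}(2,n-2)$ of dimension $2n-8$.
\end{itemize}
Comparing the dimensions, $n-2>2n-8$ exactly for $n\le 5$ and the two agree at $n=6$. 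This yields the stated dimension: $n-2$ for $n=4,5$, and $2n-8$ for $n\ge 6$. (For $n=4$ the third family is a single point.) I would remark that generic points of each family satisfy ${\rm trace}(X^2)\ne 0$, so they survive the saturation, and that taking Zariski closure adds nothing of larger dimension.

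\emph{Step 3: the hypersurface.} I would use the identity ${\rm trace}(XY) = -2\det(B\,C^T)$ for $Y=C^TJC$, where $C^T$ is an orthonormal frame of $E_1$; this is the Cauchy–Binet pairing of Plücker vectors. In both the second and third types, $L$ contains a nonzero vector $v\in E_0$, which is orthogonal to $E_1$. So one row of the $2\times 2$ matrix $BC^T$ vanishes and ${\rm trace}(XY)=0$. By closure this holds on all of (\ref{eq:extraideal}).

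The main obstacle I expect is Step 1 over $\CC$. One must check that the algebraic critical ideal (minors of $\mathcal J^{\tilde Q}_{{\rm Gr}(2,n)}$) matches the frame computation once ${\rm trace}(X^2)$ is saturated away, with no spurious components. The invertibility of $BB^T$ is equivalent to ${\rm trace}(X^2)\ne0$, which is exactly what the saturation guarantees, so I expect the argument to go through.
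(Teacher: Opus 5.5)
Your proposal is correct and is essentially the paper's proof: criticality is equivalent to $\tilde Q$-invariance of the plane, invariant planes split along ${\rm image}(\tilde Q)\oplus\ker(\tilde Q)$ into the point $Y$, the $(n-2)$-dimensional family $\PP^1\times\PP^{n-3}$ and the $(2n-8)$-dimensional ${\rm Gr}(2,n-2)$, and ${\rm trace}(XY)=0$ because every extraneous plane meets $\ker(\tilde Q)$. The one difference is that you derive the critical equations from the Rayleigh quotient on $2\times n$ frames, whereas the paper uses tangent vectors at a normalized point via ${\rm O}(n)$-transitivity; your route treats complex critical points off ${\rm trace}(X^2)=0$ directly, and you should only fix two harmless slips, namely ${\rm trace}(X^2\tilde Q)=-\det(BB^T)\,\psi(B)$ (no factor $2$), and the critical condition reads $B\tilde Q=\bigl(B\tilde QB^T(BB^T)^{-1}\bigr)B$, i.e.\ with $M^T$ rather than $M$.
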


\begin{proof}
Let $\tilde P \in {\rm pGr}(2,n)$ be as in \eqref{eq:projmatrix}. Because ${\rm O}(n)$ acts transitively on ${\rm pGr}(2,n)$, we may assume that $\tilde P = \begin{footnotesize} \begin{bmatrix}
      {\rm Id}_2 & \! 0\\
      0 & \! 0
    \end{bmatrix} \end{footnotesize}$. The tangent vectors to ${\rm pGr}(2,n)$ at $\tilde P$ have the block structure $\dot{\tilde P} = 
     \begin{footnotesize}   \begin{bmatrix}
            0 & \! A^T \\
            A &  \! \! \! \! 0
        \end{bmatrix}\end{footnotesize}$
     for $(n-2)\times 2$ matrices $A$. A subspace $X$ is critical for \eqref{eq:groptproblem} if and only if ${\rm trace}(\dot{\tilde P}\tilde Q)=0$ for every $\dot{\tilde P}$. Decomposing $\tilde Q =
     \begin{footnotesize}   \begin{bmatrix}
            Q_{11} & \! Q_{12} \\
            Q_{12}^T & \! Q_{22}
        \end{bmatrix} \end{footnotesize}$,
         this is equivalent to  $Q_{12}=0$. Hence 
         the matrices $\tilde P$ and $\tilde Q$ commute.
    This implies  the first statement. 
    
    Let $Y = {\rm im}(\tilde Q)$ and $Y^{\perp}={\ker}(\tilde Q)$. By the first part of the proof, a critical 
    subspace~$X$ is spanned by vectors in $Y$ and  $Y^{\perp}$. The
    subspaces $X \subset Y^{\perp}$ form a Schubert variety of codimension $4$ in ${\rm Gr}(2,n)$. 
   The next case, when ${\rm dim}(X \cap Y)=1$ and ${\rm dim}(X \cap Y^{\perp})=1$,
    is a transverse intersection of two Schubert varieties of codimensions $n-3$ and $1$ respectively.
    This variety has codimension $n-2$ in ${\rm Gr}(2,n)$. The remaining case is  $X=Y$. The dimension     claim follows. Finally, ${\rm trace}(XY)$ vanishes if and only if  $X$ intersects $Y^{\perp}$
    non-trivially.
\end{proof}

The Zariski closure of the set in (\ref{eq:extraideal}) is called the {\em extraneous critical locus}.
We refer to its points as the {\em extraneous critical points}. 
This locus is independent of the  choice of
model $\mathcal{M}$. It consists of all critical points on the Grassmannian other than the data point.

Now, let us fix a model $\mathcal{M}$  and data $Y$, both in the Grassmannian ${\rm Gr}(2,n)$.
The intersection between  $\mathcal{M}$ and the extraneous critical locus
is contained in the variety defined by the ED critical ideal $I^{\tilde Q}_\mathcal{M}$.
That intersection can have a positive-dimensional component. Any such 
component would be contained in the {\em algebraic cut locus} of the point $Y$, which is
\begin{equation*}
 \mathcal C_Y \,\,= 
\,\,\bigl\{X \in \mathcal {\rm Gr}(2,n) : {\rm trace}(XY) = 0 \bigr\}.
\end{equation*}

The term {\em cut locus} comes from Riemannian geometry. It refers to
the set of points in a manifold
that are connected to a given point $Y$ by two or more length-minimizing geodesics;
 see \cite[Section 3]{LR}. The cut locus for the geodesic distance
 on ${\rm Gr}(2,n)$ is precisely the set of real points in $\mathcal{C}_Y$.
Indeed, for the geodesic distance problem in \cite{LR},
  a generic data point in ${\rm Gr}(2,n)$ can have infinitely many critical points 
  in $\mathcal{M}$ that lie in the cut locus. However, for smooth models, a (local) minimizer 
  cannot be among them. This is  \cite[Theorem 36]{LR}. 

A similar phenomenon occurs for the chordal distance (\ref{eq:frobenius2}). Proposition \ref{prop:excesscomponent} shows that the hypersurface $\mathcal C_Y$ can contain infinitely many critical points for \eqref{eq:optproblem}, even for generic data $Y \in {\rm Gr}(2,n)$.
 It is remarkable that, even though the metrics used in
 \cite{LR} and in this paper  enjoy very different properties, the same locus plays a crucial role in both optimization settings. For this reason we call the hypersurface $\mathcal C_Y \subset {\rm Gr(2,n)}$ the algebraic cut locus of $Y$.  
 
 \begin{definition} \label{def:GDdefs}
We define the {\em GD critical ideal} to be the saturation of the
Lagrangian ideal (\ref{eq:lagrangianideal}) by $\langle {\rm trace}(XY) \rangle^\infty$. If the model $\mathcal M$ is not ED-general, we also saturate by $\langle {\rm trace}(X^2)\rangle^{\infty}$; this will be always implicit in the following. By Proposition \ref{prop:excesscomponent}, saturating by $\langle {\rm trace}(XY)\rangle^{\infty}$ removes all extraneous critical points
from the variety defined by the ED critical ideal.
We define the {\em GD degree} of the model $\mathcal M$, denoted ${\rm GDdegree}(\mathcal M)$, 
to be the number of complex points in the variety defined by the GD critical ideal
 for generic data points $Y \in {\rm Gr(2,n)}$. 
\end{definition}

\begin{example}[Chow threefold of the twisted cubic]
The model in (\ref{eq:bezout}) has ED degree~$42$.
By taking data $\tilde Q$ in the Grassmannian as in \eqref{eq:specialdata}, we find that the GD degree is $10$.
\end{example}

\begin{remark}
We believe that the GD degree of every model $\mathcal{M}$ in
${\rm Gr}(2,n)$ is finite. If this holds, then
${\rm GDdegree}(\mathcal M) \leq {\rm EDdegree}(\mathcal{M})$ follows.
At present, we do not have a proof.
\end{remark}

  A model $\mathcal M$ in  ${\rm Gr}(2,n)$ is
     {\em GD-general} if it satisfies the following 
  for generic $Y \in {\rm Gr}(2,n)$:
    \begin{itemize}
  \item the model $\mathcal{M}$ intersects the extraneous critical locus transversely, and
  \item the extraneous critical locus contains the intersection of
the algebraic cut locus $\mathcal C_Y$ with  the variety defined by the ED critical ideal.
\end{itemize}
   The first condition implies that (\ref{eq:extraideal}) has
   the expected codimension in $\mathcal{M}$. The second 
   condition ensures that saturating by 
   $ {\rm trace}(XY)$ only removes extraneous critical points and not genuine ones. The model $\mathcal S_{25}$ from Example \ref{ex:spectralcurve} does not satisfy the second condition: it has a critical point in $\mathcal C_Y$ that is not extraneous in the sense of Proposition \ref{prop:excesscomponent}.

\smallskip

  If the model $\mathcal M$ is GD-general and small enough, then it will not intersect the extraneous critical locus. In this case, the GD degree of the model agrees with the ED degree. 

  \begin{example}[$n = 4$]\label{ex:n=4excess}
      For GD-general curves, the ED degree equals the GD degree. 
      For GD-general surfaces, the ED critical variety has  isolated points that are
      extraneous. Their number is the
      difference between ED degree and GD degree.
      For GD-general threefolds, the ED critical variety has extraneous curves.
      All of these extraneous points are critical for~\eqref{eq:groptproblem}.
  \end{example}
  
  \begin{example}[$n = 5$]
      For GD-general curves and surfaces in ${\rm Gr}(2,5)$,   the  ED degree equals
      the  GD degree. 
      For GD-general threefolds, the ED critical variety has isolated points that are extraneous.
      For GD-general fourfolds, the ED critical variety contains extraneous curves. 
      For GD-general fivefolds, the ED critical variety
      contains extraneous surfaces.
  \end{example}

The phenomena seen in the previous two examples have
the following general description.

\begin{theorem} \label{thm:excess}
Fix $n \geq 6$ and assume that the polynomials $f_1,\ldots,f_r$ in
  (\ref{eq:idealofmodel}) are generic.
If $r > 2n - 8$ then ${\rm EDdegree}(\mathcal M) = {\rm GDdegree}(\mathcal M)$. If $r \leq 2n-8$, 
then ${\rm EDdegree}(\mathcal M) > {\rm GDdegree}(\mathcal M)$ and the
 ED critical variety has extraneous components of dimension $2n - 8-r$. 
\end{theorem}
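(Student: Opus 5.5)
The plan is a dimension count on the intersection of $\mathcal M$ with the extraneous critical locus $E_Y$ from Proposition~\ref{prop:excesscomponent}. For $n \geq 6$ that proposition gives $\dim E_Y = 2n-8$.

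Since $f_1,\ldots,f_r$ are generic, $\mathcal M$ is a generic complete intersection of codimension $r$ in ${\rm Gr}(2,n)$. For generic data $Y$, the locus $E_Y$ is a fixed subvariety in general position relative to $\mathcal M$. Concretely, I would use the ${\rm O}(n)$-action: $E_Y$ is a union of Schubert-type varieties attached to the pair $(Y, Y^\perp)$. Moving $Y$ by ${\rm O}(n)$, together with Bertini/Kleiman transversality for the generic hypersurfaces $V(f_i)$, gives
\[
\dim(\mathcal M \cap E_Y) \,=\, 2n-8-r .
\]
This intersection is empty when $r > 2n-8$.

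The key observation is that every point of $\mathcal M \cap E_Y$ lies in the ED critical variety. At such a point $X$, the first two rows of $\mathcal J^{\tilde Q}_{\mathcal M}(X)$ already depend linearly on the rows of $\mathcal J_{{\rm Gr}(2,n)}(X)$. This is because $X$ is critical for \eqref{eq:groptproblem}, meaning $\mathcal J^{\tilde Q}_{{\rm Gr}(2,n)}(X)$ has rank at most ${\rm codim}\,{\rm Gr}(2,n)+1$. Appending the $r$ rows of the $f_i$ therefore yields rank at most $c+1$, so all $(c+2)$-minors vanish.

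Two checks are needed before this gives an actual component of the ED critical variety, surviving the saturations in \eqref{eq:lagrangianideal}:
\begin{itemize}
\item $\mathcal M \cap E_Y$ is not contained in ${\rm Sing}(\mathcal M)$. This holds because a generic complete intersection in ${\rm Gr}(2,n)$ is smooth away from ${\rm Sing}({\rm Gr}(2,n))$, which is empty.
\item $\mathcal M \cap E_Y$ is not contained in $\{{\rm trace}(X^2)=0\}$. This holds because $E_Y$ has real points outside the isotropic quadric and the intersection is generic.
\end{itemize}

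Hence, for $r \le 2n-8$, the ED critical variety contains extraneous components of dimension $2n-8-r$. By Definition~\ref{def:GDdefs} and Proposition~\ref{prop:excesscomponent}, these lie in $\mathcal C_Y$ and are removed by saturating with ${\rm trace}(XY)$.

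For the degree comparison, ${\rm EDdegree}(\mathcal M)$ is computed with generic symmetric $Q$. I would argue by specialization $Q \to \tilde Q$. The ED critical points for generic $Q$ form a finite flat family over an open set of data. In the limit $Q \to \tilde Q$, some of these points must converge onto the excess locus $\mathcal M\cap E_Y$, by a conservation-of-number (excess intersection) argument: the Lagrangian incidence correspondence over $\PP({\rm Sym}^2\CC^n)$ has a fiber over $\tilde Q$ with a positive-dimensional or extraneous part, and that part carries positive Segre class contribution. The GD degree counts only the limits outside $\mathcal C_Y$, so ${\rm GDdegree} < {\rm EDdegree}$.

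When $r>2n-8$, the excess locus is empty, so I need the second GD-general condition: no genuine critical point for the data $\tilde Q$ lies in $\mathcal C_Y$, and none escape to it in the limit. For this I would use genericity of the $f_i$. The condition that a critical point of $\phi_{\tilde Q}$ on $\mathcal M$ lies in $\mathcal C_Y$ but not in $E_Y$ imposes one extra condition on a finite set, so it fails for generic $f_i$. Then the limit is a proper finite flat specialization and the counts agree.

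I expect the main obstacle to be the strict inequality: showing rigorously that the excess component absorbs a positive number of ED critical points under specialization. If a direct Segre class computation is too heavy, I would first try a local argument. At a general point of $\mathcal M\cap E_Y$, I would perturb $\tilde Q$ to $\tilde Q + \epsilon Q'$ and exhibit, via the implicit function theorem in the normal directions to $E_Y$, at least one critical point of $\phi_{\tilde Q+\epsilon Q'}$ on $\mathcal M$ converging to it.
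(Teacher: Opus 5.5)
Your proposal follows the paper's proof, which consists only of the assertion that generic $f_1,\ldots,f_r$ make $\mathcal M$ GD-general, combined with the dimension $2n-8$ of the extraneous locus from Proposition~\ref{prop:excesscomponent}; your rank argument placing $\mathcal M\cap E_Y$ in the ED critical variety, your saturation checks, and your genericity argument for the cut-locus condition spell out what the paper leaves implicit. The paper does not argue the strict inequality you flag either, and it does not need Segre-class positivity, which is not automatic for excess components: as in the proof of Proposition~\ref{prop:EDideal}, the incidence of pairs $(X,Q)$ is a bundle of linear spaces over $\mathcal M\setminus\{{\rm trace}(X^2)=0\}$ (smooth by Bertini), hence irreducible, so by Stein factorization over the normal data space the extraneous component over $\tilde Q$ is a limit of, and absorbs at least one of, the ${\rm EDdegree}(\mathcal M)$ critical points for generic $Q$.
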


\begin{proof}
    By the genericity of its defining polynomials $f_1,\dots,f_r$,
     the model $\mathcal M$ is GD-general. The conclusion then follows from the 
     dimension statement in Proposition \ref{prop:excesscomponent}.
\end{proof}

An immediate consequence is that ${\rm GDdegree}(\mathcal M) < {\rm EDdegree}(\mathcal M)$ if ${\rm dim}(\mathcal M) \geq 4$. A discrepancy between the ED and GD degrees means
that the projection Grassmannian $\overline{\rm p}{\rm Gr}(2,n)$ 
is contained in the {\em ED discriminant} $\Sigma_{\mathcal{M}}$ of the given
model $\mathcal{M}$.
Recall from  \cite[Section~7]{DHOST} that $\Sigma_{\mathcal{M}}$
 is the hypersurface in $\PP({\rm Sym}^2 \CC^n)$ for which this drop occurs.
 In fact, based on computations, 
 the following stronger statement seems to hold for many models $\mathcal{M}$:
 \begin{equation}
\label{eq:inclusions5}
\mathcal{M} \,\subset \, \overline{\rm p}{\rm Gr}(2,n) \,\subset \,
\mathcal{V}_{X^2} \,\subset \, \Sigma_{\mathcal{M}} \, \subset \,
\PP( {\rm Sym}^2 \CC^n).
\end{equation}
We use the term {\em SD degree} for the number of
critical points when $Q$ is generic in $\mathcal{V}_{X^2}$.
The inclusion 
$\,\mathcal{V}_{X^2} \,\subset \, \Sigma_{\mathcal{M}} \,$
in (\ref{eq:inclusions5}) 
means that the SD degree is less than
the ED degree for $\mathcal{M}$.
 
In our running example, $\mathcal{M}$ has SD degree $20$. This is
 strictly between ${\rm GDdegree}(\mathcal{M}) = 10$
and ${\rm EDdegree}(\mathcal{M}) = 42$. 
 In fact, we observed that  ${\rm SDdegree}(\mathcal{M}) =
 2  \cdot {\rm GDdegree}(\mathcal{M}) $ whenever $\mathcal{M}$ is the
 Chow threefold of a toric curve in $\PP^3$; see Section~\ref{sec5}.
 For the Schubert varieties $\mathcal{S}_{ij}$ in Section~\ref{sec6},
 we conjecture that $\mathcal{V}_{X^2} \subset \Sigma_{\mathcal{S}_{ij}}$
if and only if $i=1$ or $(i,j) = (2,3)$.
These families of models will be introduced in slow motion in the remaining two sections.

\section{Lines in 3-Space}
\label{sec5}

This section is devoted to the GD problem for lines in $3$-space.
The model $\mathcal{M}$ is a subvariety of  ${\rm Gr}(2,4)$.
 If ${\rm dim}(\mathcal{M})=1$ then the union of
 its lines is a {\em ruled surface} in $\PP^3$. If ${\rm dim}(\mathcal{M})=2$
  then it represents a 
{\em line congruence}, i.e.~a surface of lines whose
union covers $\PP^3$. These are important in
computer vision \cite{PST}.
The term {\em line complex} is used classically for
a hypersurface in ${\rm Gr}(2,4)$. Such a threefold $\mathcal{M}$
is defined by one polynomial in the Pl\"ucker coordinates $x_{12},x_{13},x_{14},x_{23},x_{24},x_{34}$.
We begin with the case of generic complete intersections in ${\rm Gr}(2,4)$. 

\begin{theorem}\label{thm:CI}
 A surface in ${\rm Gr}(2,4)$ defined by generic polynomials 
 $f_1$ and $f_2$ of degrees $d_1$ and $ d_2$ has ED degree $2d_1d_2(d_1^2+d_2^2+d_1d_2+3)$.
  A curve in ${\rm Gr}(2,4)$ defined by three generic polynomials 
  $f_1,f_2,f_3$ of degrees $d_1, d_2, d_3$ has ED degree and GD degree $2d_1d_2d_3(d_1+d_2+d_3)$.
\end{theorem}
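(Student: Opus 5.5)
By Theorem~\ref{thm:EDgeneral}, a surface or curve $\mathcal M\subset{\rm Gr}(2,4)$ cut out by generic $f_i$ is ED-general. So by Proposition~\ref{prop:EDideal} the ED degree counts, without any saturation by ${\rm trace}(X^2)$, the critical points on $\mathcal M$ of
$$\phi_Q = 2\,\frac{{\rm trace}(X^2Q)}{{\rm trace}(X^2)},$$
a ratio of two sections $F={\rm trace}(X^2Q)$ and $G={\rm trace}(X^2)$ of $L=\mathcal O_{\mathcal M}(2)$. I would globalize the argument of Theorem~\ref{thm:rtlfct}. On the smooth $m$-dimensional variety $\mathcal M$ ($m=1,2$), the pair $(F,G)$ gives a bundle map $\mathcal O^2\to P^1(L)$ into the bundle of first principal parts of $L$, which has rank $m+1$. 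Critical points of $F/G$ are exactly the points where this map has rank $\le 1$; locally this is the rank condition on the matrix (\ref{eq:fgmatrix}), and globally it is the rank condition on the augmented Jacobian (\ref{eq:augmentedJ}). By Giambelli--Thom--Porteous, when this locus is finite and reduced its degree is
$$\int_{\mathcal M} c_m\bigl(P^1(L)-\mathcal O^2\bigr) \;=\; \int_{\mathcal M} c_m\bigl(P^1(L)\bigr),$$
and the exact sequence $0\to\Omega_{\mathcal M}\otimes L\to P^1(L)\to L\to 0$ gives $c(P^1(L))=c(\Omega_{\mathcal M}\otimes L)\,c(L)$.

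Next I would compute this class by adjunction. ${\rm Gr}(2,4)$ is the smooth quadric in $\PP^5$, and $\mathcal M$ is a complete intersection in it. With $h$ the hyperplane class,
$$c(T_{\mathcal M})=\frac{(1+h)^6}{(1+2h)\prod_i(1+d_ih)}, \qquad \int_{\mathcal M} h^m = 2\prod_i d_i .$$

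For the curve ($m=1$) this gives
$$\int c_1(P^1(L)) = \int \bigl(2c_1(L)-c_1(T_{\mathcal M})\bigr) = \int\bigl(4h-(4-\textstyle\sum d_i)h\bigr),$$
which is $2d_1d_2d_3(d_1+d_2+d_3)$, as claimed.

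For the surface ($m=2$), write $c_1=c_1(T_{\mathcal M})=(4-d_1-d_2)h$ and $c_2=c_2(T_{\mathcal M})$, which is read off from the expansion above. Then
$$c(\Omega\otimes L)=1+(2\ell-c_1)+(\ell^2-c_1\ell+c_2), \qquad \ell=2h .$$
Multiplying by $(1+\ell)$ and taking the degree-$2$ part gives $3\ell^2-2c_1\ell+c_2$. Integrating against $2d_1d_2h^2$ should simplify to $2d_1d_2(d_1^2+d_2^2+d_1d_2+3)$. As a sanity check, $d_1=d_2=1$ should give $12$.

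The main obstacle is justifying that Porteous counts exactly the ED critical points, each with multiplicity one. Three issues need handling.
\begin{enumerate}
\item Base points $F=G=0$ on $\mathcal M$ must contribute nothing. For general $Q$, the rank drop there imposes $m+1>m$ conditions, so such points do not occur.
\item Points with ${\rm trace}(X^2)=0$ and $\nabla F\parallel\nabla G$ must be excluded. This is exactly what the transversality in ED-generality (Theorem~\ref{thm:EDgeneral}) provides: $\mathcal M$ avoids the base locus $X^2=0$, which has codimension $3$ in ${\rm Gr}(2,4)$.
\item Reducedness. This should follow from an incidence-variety argument as in Proposition~\ref{prop:EDideal}, where the conditions are linear in $Q$.
\end{enumerate}
The point to watch is that $G$ is not generic: it is the fixed isotropic quadric. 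So the genericity used in Theorem~\ref{thm:rtlfct} must be replaced by the transversality of $\mathcal M$ to that quadric.

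For the curve, I would show GD degree $=$ ED degree. By Proposition~\ref{prop:excesscomponent}, the extraneous critical locus for $n=4$ has dimension $2$, hence codimension $2$ in ${\rm Gr}(2,4)$, and it moves with $Y$. A generic curve therefore misses it for generic $Y$; this is Example~\ref{ex:n=4excess}. Moreover, the finitely many ED critical points avoid the hypersurface $\mathcal C_Y$ for generic $Y$. Hence saturating by ${\rm trace}(XY)$ removes nothing, and ${\rm GDdegree}={\rm EDdegree}=2d_1d_2d_3(d_1+d_2+d_3)$.
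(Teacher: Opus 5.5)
Your proof is correct, and its skeleton matches the paper's: ED-generality from Theorem~\ref{thm:EDgeneral}, a Giambelli--Thom--Porteous count, and Example~\ref{ex:n=4excess} for GD $=$ ED on curves. The difference is in how you evaluate the Porteous class.

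The paper applies the formula directly to the augmented Jacobian (\ref{eq:augmentedJ}) along $\mathcal M$. Its $r+3$ rows have degrees $1,1,1,d_1-1,\ldots,d_r-1$, so the count is $\deg(\mathcal M)\cdot h_{4-r}(1,1,1,d_1-1,\ldots,d_r-1)$. This is the displayed sum, using $\binom{i_0+2}{2}=h_{i_0}(1,1,1)$, specialized to $r=2,3$.

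You instead compute $\int_{\mathcal M}c_m\bigl(P^1(\mathcal O_{\mathcal M}(2))\bigr)$ intrinsically via adjunction. Your arithmetic checks out: the degree-$2$ part is $(d_1^2+d_2^2+d_1d_2+3)h^2$.

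The two give the same class. Your map $\mathcal O^2\to P^1(\mathcal O_{\mathcal M}(2))$ drops rank exactly where $\mathcal O^2\oplus N^*_{\mathcal M}(2)\to P^1(\mathcal O_{\PP^5}(2))|_{\mathcal M}=\mathcal O_{\mathcal M}(1)^6$ fails to be injective. That map is (\ref{eq:augmentedJ}) up to a twist by $\mathcal O(1)$, and a twist does not change the Porteous class.

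What each buys: the paper's version is one formula uniform in $r$ that directly gives the degree of the ideal used in computations. Yours makes visible that one is counting a degeneracy locus on the compact variety $\mathcal M$. Hence the only possible spurious contributions sit on ${\rm trace}(X^2)=0$, which is exactly what ED-generality excludes.

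Two steps in your GD argument are only asserted, as they are in the paper's appeal to Example~\ref{ex:n=4excess}.

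\emph{Special data.} Now $\tilde Q$ ranges over ${\rm pGr}(2,4)$ rather than all of ${\rm Sym}^2\CC^4$. Showing that saturation by ${\rm trace}(XY)$ removes nothing therefore yields ${\rm GDdegree}={\rm EDdegree}$ only if the critical scheme at generic $\tilde Q\in{\rm pGr}(2,4)$ is still reduced and avoids ${\rm trace}(X^2)=0$. Its length is then fixed by Porteous on the compact curve. However, your items 1 and 3 used conditions linear in a generic $Q$, so the incidence argument must be rerun with $Y\in{\rm Gr}(2,4)$ as the parameter.

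\emph{The cut locus.} Your claim that genuine critical points miss $\mathcal C_Y$ needs proof, since both move with $Y$, and Example~\ref{ex:spectralcurve} shows it fails for special models. For a curve it does follow from a dimension count on the incidence variety in $\mathcal M\times{\rm Gr}(2,4)$.
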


\begin{proof}
  These surfaces and curves are ED-general by Theorem~\ref{thm:EDgeneral}, so the number of critical points is the degree of the ideal $I_{\mathcal M}^Q$.
  By the Giambelli-Thom-Porteous formula, this degree~is
  \begin{equation*}
  \begin{matrix}
    {\rm deg}(I_{\mathcal M}^Q) \,\,= \,\,
    (2d_1\cdots d_r) \cdot \sum_{i_0 + \cdots + i_r = 4-r} \binom{i_0+2}{2} (d_1-1)^{i_1} \cdots (d_r-1)^{i_r}.
    \end{matrix}
  \end{equation*}
  One obtains the formulas in our assertion by plugging in $r = 2$ for surfaces and $r = 3$ for curves.
  By Example~\ref{ex:n=4excess}, the ED and GD degrees are equal for a general curve~in~${\rm Gr}(2,4)$.
\end{proof}

\begin{conjecture} \label{conj:surface}
    The GD degree of a surface defined by generic polynomials of degrees $d_1, d_2$ is $2d_1d_2(d_1^2 + d_2^2 + d_1d_2 + 2)$. This is $2d_1d_2$ less than the ED degree.
    A threefold defined by a generic polynomial $f$ of degree $d$ has ED degree $2d(d^3+3d+2)$ and GD degree $2d(d^3+2d)$.
\end{conjecture}

The following tables show these data  for surfaces and threefolds of low degree in ${\rm Gr}(2,4)$: 
\smallskip

\begin{footnotesize}
\!\!\!\!\!\!\!\!\!\!
\begin{tabular}{|c|cccc|}
  \toprule
  \diagbox[height=1.4em]{$\scriptstyle d_1$}{$ \scriptstyle d_2$} & 1 & 2  & 3 &  4 \\
  \midrule
  1 & 12 & 40 & 96 & 192 \\
  2 & & 120 & 264 & 496  \\
  3 & & &540 & 960 \\
  4 & & & &  1632\\
  \bottomrule 
  \multicolumn{5}{c}{ED degree of surfaces}
\end{tabular}
\hfill \,
\begin{tabular}{|c|cccc|}
  \toprule
  \diagbox[height=1.4em]{$\scriptstyle d_1$}{$ \scriptstyle d_2$} & 1 & 2  & 3 &  4 \\
  \midrule
  1 & 10 & 36 & 90 & 184 \\
  2 & & 112 & 252 & 480  \\
  3 & & &522 & 936 \\
  4 & & & &  1600\\
  \bottomrule
  \multicolumn{5}{c}{GD degree of surfaces}
\end{tabular}
\hfill \,
\begin{tabular}{|c|cc|}
  \toprule
  $d$ & ED degree & GD degree\\
  \midrule 
  1 & 12 & 6 \\
  2 & 64 & 48 \\
  3 & 228 & 198\\
  4 & 624 & 576\\
  5 & 1420 & 1350\\
  \bottomrule
  \multicolumn{3}{c}{Threefolds}
\end{tabular}
\end{footnotesize}

\smallskip 

Certain ruled surfaces,  line congruences and line complexes arise from
a given curve $\mathcal{C}$ of degree $d$ in $\PP^3$.
The {\em Chow threefold} of $\mathcal{C}$ is the set of lines
that intersect $\mathcal{C}$. 
Its defining Pl\"ucker polynomial has degree $d$.
If $d=1$ then the Chow threefold is the Schubert variety $\mathcal{S}_{13}$
in Example  \ref{ex:schubert4}.
For any given line $L$,
our GD problem asks for the line closest to $L$ among those
that intersect $\mathcal{C}$.
The {\em secant congruence} of $\mathcal{C}$ is the set of lines
that intersect $\mathcal{C}$ at two points. 
The best known ruled surfaces are quadrics in $\PP^3$. These
have two rulings of lines. Our GD problem
asks for the line in a ruling that is
closest to the data line $L$.~The~set~of lines tangent to $\mathcal{C}$
is the {\em tangent curve} in ${\rm Gr}(2,4)$. Its lines form the tangential surface of~$\mathcal{C}$.

We present
 an experimental study
for toric curves
$\mathcal{C}  =  \{\,
( 1 : t^{u_1} : t^{u_2} : t^{u_3}) \in \PP^3 \,: \, t \in \CC\, \}$.
The exponents are integers which satisfy $ 0 < u_1 < u_2 < u_3$ and
${\rm gcd}(u_1,u_2,u_3) = 1$.
We consider the three parametrically presented models
$\mathcal{M} \subset {\rm Gr}(2,4)$ that are derived from $\mathcal{C}$:
\begin{itemize}
\item The Chow threefold: \
$\, A(t,x,y) = \begin{bmatrix}
 1  &  t^{u_1}  &  t^{u_2} & t^{u_3} \\
 0 & 1 & x & y \end{bmatrix} $.
 \item The secant surface: \
$\, A(t,x) = \begin{bmatrix}
 1  &  t^{u_1}  &  t^{u_2} & t^{u_3} \\
 1  &  x^{u_1}  &  x^{u_2} & x^{u_3} \\
 \end{bmatrix} $.
\item The tangent curve: \
$\, A(t) = \begin{bmatrix}
 1  &  t^{u_1}  &  t^{u_2} & t^{u_3} \\
 0 & u_1 t^{u_1-1} &  u_2 t^{u_2-1} &  u_3 t^{u_3-1} 
 \end{bmatrix} $.
\end{itemize}

\noindent   These models have dimensions $3,2,1$.
   For the secant surface, the parametrization is two-to-one. 
   For the first and last, it is birational.
For the curve, we found the following result:

\begin{proposition}\label{prop:tangentcurve}
    The tangent curve of $\mathcal{C}$ has the same ED degree and GD degree. This degree equals
    $4 u_3+2 u_2-2u_1$, unless $u_3=u_1+u_2 $ and $u_3$ is even, in which case we subtract~$2$.
\end{proposition}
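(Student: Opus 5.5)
The plan is to work directly with the birational parametrization $A(t)$ and reduce everything to counting roots of a single univariate polynomial, in the spirit of Theorem~\ref{thm:rtlfct}. Index the columns by the exponents $u_0=0<u_1<u_2<u_3$. The Pl\"ucker coordinates of $A(t)$ are monomials,
$$x_{ij}(t) \,=\, (u_j-u_i)\, t^{\,u_i+u_j-1}.$$
Hence $f(t)={\rm trace}(X(t)^2Q)$ and $g(t)={\rm trace}(X(t)^2)$ are sparse polynomials. Their exponents are sums of two Pl\"ucker exponents; for $g$ only squares $x_{ij}^2$ occur. Critical points of $\phi_Q(X(t))$ with $t\in\CC^*$ are the roots of the Wronskian $W=f'g-fg'$ with $g\neq 0$.

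\textbf{Step 1 (Newton polygon count).} I will read off the extreme exponents of $W$.
\begin{itemize}
\item The top exponent of both $f$ and $g$ is $D=2(u_2+u_3-1)$, coming from $x_{34}^2$.
\item The bottom exponent of both is $m=2(u_1-1)$, coming from $x_{12}^2$.
\end{itemize}
In $W$ the products of the two leading terms cancel, and likewise for the two trailing terms. The first surviving top term comes from the mixed term $x_{34}x_{24}$ in $f$, of exponent $D-(u_2-u_1)$, paired with the leading term of $g$. The exponent of $g$ just below $D$ is $D-2(u_2-u_1)$, so no competing term appears at that degree. The resulting coefficient is $-(u_2-u_1)\,b\,c$, which is nonzero for generic $Q$.

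Symmetrically, at the bottom the mixed term $x_{12}x_{13}$ of $f$, of exponent $m+(u_2-u_1)$, survives. Thus the number of roots of $W$ in $\CC^*$ is
$$\bigl(2D-(u_2-u_1)-1\bigr)-\bigl(2m+(u_2-u_1)-1\bigr)\,=\,4u_3+2u_2-2u_1.$$
The points $t=0,\infty$ correspond to the osculating line limits and are excluded by this count.

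\textbf{Step 2 (genuineness).} I must show that for generic $Q$ these roots are simple and satisfy $g(t)\neq 0$. Equivalently, $f$ and $g$ have no common root, which would otherwise divide $W$. For generic $Q$, a common root of $f$ and $g$ forces a common root of $g$ and of some $g$-independent combination. This can only happen at roots of $g$ where the curve is tangent to the isotropic quadric, i.e.\ at multiple roots of $g$, or where $X(t)^2=0$ (the base locus of Corollary~\ref{cor:baselocus}).

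So I would analyze the discriminant of
$$g(t)/t^m \,=\, -2\sum (u_j-u_i)^2\, t^{\,2(u_i+u_j-1)-m}.$$
Generically $g$ has simple roots, and then a simple root of $g$ that is not a root of $f$ is not a root of $W$. The exceptional case $u_3=u_1+u_2$ with $u_3$ even is where I expect $g$ to acquire a double root, or $X(t)$ to hit $\{X^2=0\}$. In that case $W$ picks up a spurious root that survives saturation by ${\rm trace}(X^2)$ only after removal. By the $t\mapsto -t$ type symmetry, I expect these spurious roots to come in a pair, accounting for the subtraction of $2$.

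Identifying precisely when this degeneracy occurs is the main obstacle. I would first test $g$ and $x_{ij}$ on the identity $x_{12}x_{34}-x_{13}x_{24}+x_{14}x_{23}=0$ combined with ${\rm trace}(X^2)=0$ and $X^2=0$, which for $u_3=u_1+u_2$ make the exponents of $x_{14}$ and $x_{23}$ coincide. That coincidence should make $X(t)^2$ vanish at a point with $t^{u_3}=\text{const}$ exactly when the parity condition allows a matching sign.

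\textbf{Step 3 (ED $=$ GD).} By Proposition~\ref{prop:excesscomponent} with $n=4$, the extraneous critical locus has dimension $2$, i.e.\ codimension $2$ in ${\rm Gr}(2,4)$, and it is contained in $\mathcal C_Y$. As $Y$ varies, this locus is moved by ${\rm O}(4)$. Hence for generic $Y$ it misses the curve, and the saturation by ${\rm trace}(XY)$ removes nothing genuine.

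It then remains to check that the Step~1 count persists for special data $\tilde Q\in{\rm pGr}(2,4)$. Concretely:
\begin{itemize}
\item the two surviving extreme coefficients stay nonzero for generic $\tilde Q$, which is an open condition and nonempty on ${\rm pGr}(2,4)$, verified by one explicit $\tilde Q$;
\item the roots remain simple, i.e.\ ${\rm pGr}(2,4)\not\subset\Sigma_{\mathcal M}$, again checked on a single example.
\end{itemize}
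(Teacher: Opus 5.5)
Your Step~1 is correct and is the paper's own count. The paper says the numerator of the derivative of the objective has degree $4u_3+3u_2+u_1-5$ and is divisible by $t^{3u_1+u_2-5}$. These are exactly your extreme exponents $2D-(u_2-u_1)-1$ and $2m+(u_2-u_1)-1$. Your identification of the surviving mixed terms $x_{34}x_{24}$ and $x_{12}x_{13}$ (both with coefficient proportional to $Q_{23}$) supplies the justification the paper omits.

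The gap is in Step~2, which carries the whole exceptional case. There you only conjecture, and you conjecture the wrong mechanism. First, $g={\rm trace}(X(t)^2)$ does not involve $Q$. So ``generically $g$ has simple roots'' is not a genericity statement at all: whether $g$ has multiple roots is a fixed property of $(u_1,u_2,u_3)$ and must be computed. Second, the spurious roots do not come from the base locus $\{X^2=0\}$, so the search you outline in your last paragraph would find nothing.

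The missing computation is the following. If $u_3=u_1+u_2$, then $u_3-u_1=u_2$, $u_3-u_2=u_1$ and $u_3^2+(u_2-u_1)^2=2u_1^2+2u_2^2$. The six terms of $g$ then regroup as
\[
t^2\sum_{i<j}x_{ij}(t)^2 \;=\; u_1^2\,t^{2u_1}\bigl(1+t^{2u_2}\bigr)^2\,+\,u_2^2\,t^{2u_2}\bigl(1+t^{2u_1}\bigr)^2 .
\]
\begin{itemize}
\item At $t=\pm i$ this vanishes if and only if $u_1$ and $u_2$ are both odd. Since $\gcd(u_1,u_2)=1$, this is equivalent to $u_3$ being even.
\item In that case $(t^2+1)^2$ divides $g$, and the cofactor equals $-2u_1^2u_2^2\neq 0$ at $t=\pm i$, so these are exactly double roots.
\item Hence $g(\pm i)=g'(\pm i)=0$, which forces $t^2+1$ to divide $W=f'g-fg'$ for every $Q$. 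These roots are not critical points, since $\phi_Q$ is undefined there, and saturating by ${\rm trace}(X^2)$ removes them. This is the subtraction of $2$, and it is precisely the paper's argument.
\end{itemize}

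Moreover $X(\pm i)^2\neq 0$. For the rows $a,b$ of $A(i)$ one gets $a\cdot a=a\cdot b=0$ but $b\cdot b=u_1^2+u_2^2-u_3^2=-2u_1u_2\neq 0$. So the curve is tangent to $\{{\rm trace}(X^2)=0\}$ at a point outside the base locus. Consequently $f(\pm i)\neq 0$ for generic data, and together with $g''(\pm i)\neq0$ this makes $\pm i$ simple roots of $W$.

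Two further points. The converse, that $g$ has no multiple roots outside the exceptional case, is needed for the formula to be exact. The paper supports it only computationally, and it cannot be obtained from genericity in $Q$. In Step~3, the extraneous locus missing the curve only shows there is nothing extraneous to remove. You must separately check that no genuine critical point lies on $\mathcal C_Y$, since that is what saturation by ${\rm trace}(XY)$ would delete; this fails, for instance, for $\mathcal S_{25}$. It is an open condition, so it can be added to your single-example verification.
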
   

\begin{proof}
    The numerator of the derivative of the objective function, denoted $f(t)$, has degree $4u_3 + 3u_2 + u_1 - 5$.
    The zeros of $f$ parametrize the critical points.
    This tangent curve is not ED general. 
    Indeed, $f$ has a factor of $t^{3u_1 + u_2 - 5}$. 
    The remaining $4u_3 + 2u_3 - 2u_1$ points are critical. 
    If $u_3 = u_1 + u_2$ and $u_3$ is even, then ${\rm trace}(X(t)^2)$ has a factor of $(t^2 + 1)^2$; therefore  $f$ has a factor of $t^2 + 1$.
    These points are extraneous because $t^2+1$ divides ${\rm trace}(X(t)^2)$.
\end{proof}

   The ED, SD, and GD degrees for some toric curves are shown in
   the following table:

\begin{footnotesize}
    \begin{center}
\begin{tabular}{c ccc c ccc c  ccc}
\toprule
& \multicolumn{3}{c}{Chow threefold}
& & \multicolumn{3}{c}{Secant surface}
& & \multicolumn{3}{c}{Tangent curve} \\
\cmidrule(lr){2-4}
\cmidrule(lr){6-8}
\cmidrule(lr){10-12}
$(u_1, u_2, u_3)$ 
& ED & SD& GD &
& ED & SD & GD &
& ED & SD & GD \\
\midrule
$(1, 2, 3)$ & 42 & 20 & 10 & & 19 & 15 & 15  & & 14 & 14 & 14 \\
$(1,2,4)$ & 55 & 26 & 13 & & 62 & 54 & 54  & & 18 & 18 & 18 \\
$(1,2,5)$ & 68 & 32 & 16 & & 111 & 98 & 98  & & 22 & 22 & 22 \\
$(1,2,6)$ & 81 & 38 & 19 & & 170 & 151 & 151 & & 26 & 26 & 26 \\
$(1,2,7)$ & 94 & 44 & 22 & & 235 & 209 & 209  & & 30 & 30 & 30 \\
$(1,3,4)$ & 54 & 24 & 12 & & 59 & 50 & 50  & & 18 & 18 & 18 \\
$(1,3,5)$ & 71 & 34 & 17 & & 122 & 108 & 108  & & 24 & 24 & 24 \\
$(1,3,6)$ & 84 & 40 & 20 & & 179 & 158 & 158  & & 28 & 28 & 28 \\
$(1,3,7)$ & 97 & 46 & 23 & & 262 & 234 & 234  & & 32 & 32 & 32 \\
$(1,4,5)$ & 74 & 36 & 18 & & 113 & 97 & 97 & & 26 & 26 & 26 \\
$(1,4,6)$ & 87 & 42 & 21 & & 212 & 189 & 189  & & 30 & 30 & 30 \\
\bottomrule
\end{tabular}
\end{center}
\end{footnotesize}

We also tested whether the optimal line intersects the data line.
For the examples above, this happens for the Chow threefolds, but not for the secant surfaces or tangent curves. 
Based on these computations, we venture the following conjectures
for every curve $\mathcal{C} \subset \PP^3$:

\begin{conjecture}
 The optimal line in the Chow threefold always intersects the data line.
   This does not hold for the secant surface and the tangent curve.
\end{conjecture}

\begin{conjecture} We have
$    {\rm SDdegree} = 2 \cdot {\rm GDdegree}$ for the Chow threefold. For the secant surface,
$    {\rm SDdegree} = {\rm GDdegree}$.
 \end{conjecture}
 
\section{Schubert Varieties}
\label{sec6}

The most prominent subvarieties of a Grassmannian are its Schubert varieties.
Their metric algebraic geometry was studied in \cite{LLY, LR}.
For $1 \leq i \!<\! j \leq n$, write $ \mathcal{L}_{ij} $ for the
linear space of skew-symmetric matrices $X = [x_{rs}]$ such that
$x_{rs} = 0$ unless $i \leq r$ and $j \leq s$. The {\em Schubert variety}
$\mathcal{S}_{ij} $ is the intersection~${\rm Gr}(2,n) \cap \mathcal{L}_{ij}$.
We use this term for any variety obtained from $\mathcal{S}_{ij}$ by an orthogonal transformation.
Note that \,$ {\rm dim}( \mathcal{S}_{ij} ) \, = \, 2n-i-j-1$.
The two extreme cases are
$\mathcal{S}_{12} = {\rm Gr}(2,n)$ and $\mathcal{S}_{n-1,n} = $ one point.
See also Example  \ref{ex:spectralcurve}.
We write $\mathcal{S}_{ij}^2 \subset \overline{\rm p}{\rm Gr}(2,n) \subset \PP({\rm Sym}^2\CC^n)$ for the image
of the Schubert variety 
$\mathcal{S}_{ij}$ under \eqref{eq:square}.

The implicit representation of the Schubert variety in  Pl\"ucker coordinates is the ideal
\begin{equation*}
\hbox{ideal of} \,\,\mathcal{S}_{ij} \,\,=\,\,
\hbox{ideal of} \,\,{\rm Gr}(2,n) \, + \,
\bigl\langle x_{rs} : r < i \,\,{\rm or}\,\, s < j \bigr\rangle.
\end{equation*}
We conjecture that the ideal of $\mathcal{S}_{ij}^2$ is generated
by quadrics and cubics. Conjecture \ref{conj:schubert} gives
the precise statement.
We write $P_{rs}$ for the $2 \times n$ submatrix of
$P$ given by the rows $r$ and~$s$.

\begin{conjecture} \label{conj:schubert}
The prime ideal of $\mathcal{S}_{ij}^2$ is 
generated by
the entries of $2 P^2 - {\rm trace}(P) \cdot P$, 
the $3 \times 3$ minors of $P$,
and the $2 \times 2$ minors  of the submatrices $P_{rs}$
where $ r < i$ or  $s < j $.
\end{conjecture}

This conjecture was verified for $n \leq 8$. 
We next present a census for $n = 4,5$.

\begin{example}[$n=4$]  \label{ex:schubert4} There are six Schubert varieties in ${\rm Gr}(2,4)$,
summarized as follows:
\[
\begin{footnotesize}
\begin{array}{|cccccccc|}
    \toprule
    & & \mathcal S_{12} & \mathcal S_{13} & \mathcal S_{23} & \mathcal S_{14} & \mathcal S_{24} & \mathcal S_{34}  \\
    \midrule 
{\rm dim} &&  4 & 3 & 2 & 2 & 1 &  0 \\
{\rm mingens} &&  (0,10) &(0,16) & (4,6) & (4,6) & (7,1) & (9,0) \\
{\rm degree} &&  12 & 12 & 4 & 4 & 2 &  1\\
{\rm ED\, degree} & & 6 & 10 & 3 & 3 & 2 & 1 \\
{\rm GD \,degree} && 1 & 2 & 1 & 1 & 2 & 1 \\
\bottomrule
\end{array}
\end{footnotesize}
\]
The rows labeled ``degree'' and ``mingens'' refer to the
variety $\mathcal{S}_{ij}^2$ in $\PP({\rm Sym}^2 \CC^4) = \PP^9$.
The ideal generators have degrees $1$ and $2$.
We list their numbers.
For instance, the  ideal of the surface $\mathcal{S}_{23}^2$ is generated by
$p_{11},p_{12},p_{13},p_{14}$ and the  entries of $2 {P_{-1}}^2 - {\rm trace}(P_{-1}) P_{-1}$,
where $P_{-i}$ is the
$3 \times 3$ matrix obtained from $P$ by deleting row $i$ and column $i$.
The ideal of the surface $\mathcal{S}_{14}^2$ is generated by
$p_{14},p_{24},p_{34}$ and
$p_{11}+p_{22}+p_{33}-p_{44}$ plus the
$2 \times 2$ minors of $P_{-4}$.
\end{example}

\begin{example}[$n=5$] \label{ex:schubert5} There are ten Schubert varieties in ${\rm Gr}(2,5)$,
summarized as follows:
$$ \begin{footnotesize}
\begin{array}{|ccccccccccc|}
\toprule 
 & \mathcal S_{12} & \mathcal S_{13} & \mathcal S_{23} &  \mathcal S_{14} & \mathcal S_{24} & \mathcal S_{15} & \mathcal S_{34} & \mathcal S_{25} & \mathcal S_{35} & \mathcal S_{45} \\
 \midrule 
{\rm dim} & 6 & 5 & 4 & 4 & 3 & 3 & 2 & 2 & 1 & 0 \\
{\rm mingens} & 
 (0,15) & \!(0,25) & (5,10) & (0,40) &  (5,16) & (5,20)  & (9,6) & (9,6)  &  \!(12,1) \!& \!(14,0) \\
 {\rm degree} &
40 & 40 & 12 &  28 & 12 &  8   & 4 & 4  & 2  & 1 \\
{\rm ED \,degree} &  10 & 24 & 6 & 16 & 10 & 4 & 3 & 3 & 2 & 1 \\
{\rm GD \,degree} & 1 & 2 & 1 & 2 & 2 & 1
& 1 & 2 & 2 & 1 \\
\bottomrule 
\end{array} 
\end{footnotesize}
$$
\end{example}
\newpage
The degree of $\mathcal S_{ij}^2$
depends only on the differences $n-j$ and $n-i$. Here are some values:
\begin{center}
\begin{footnotesize}
\begin{tabular}{|c|ccccccccc|}
  \toprule
  \diagbox[height=1.65em]{$\scriptstyle n-j$}{$\scriptstyle n-i$} & 1 & 2  & 3 &  4 &  5  &  6  &   7  &   8  & 9\\
  \midrule
  0 & 1 & 2 & 4 & 8 & 16 & 32 & 64 & 128 & 256\\
  1 & & 4 & 12 & 28 & 60 & 124 & 252 & 508 & 1020\\
  2 & & & 12 & 40 & 100 & 224 & 476 & 984 & 2004 \\
  3 & & & & 40 & 140 & 364 & 840 & 1824 & 3828\\
  4 & & & & & 140 & 504 & 1344 & 3168 & 6996\\
  5 & & & & & & 504 & 1848 & 5016 & 12\,012\\
  6 & & & & & & & 1848 & 6864 & 18\,876\\
  7 & & & & & & & & 6864 & 25\,740\\ 
  \bottomrule
\end{tabular}
\end{footnotesize}
\end{center}
When $n = j$, the variety $\mathcal S_{in}^2$ is the Veronese square $\nu_2(\PP^{n-i-1})$. 
This explains the powers of two in the $n -j = 0$ row. 
Rows 1-7 match the OEIS sequence {\tt A201385} up to a factor of 4.

\begin{conjecture} \label{conj:factor4}
  For $j < n$, the varieties $\mathcal S_{ij}^2$ satisfy
$\,    \deg(\mathcal S_{i,n-1}^2) = 4(2^{n-i-1} - 1) \,$ and
\begin{equation*}
 \quad
    \deg(\mathcal S_{ij}^2) \,\,=\,\, \deg(\mathcal S_{i+1, j}^2) + \deg(\mathcal S_{i,j+1}^2) \qquad
\hbox{  where $\,\,\deg(\mathcal S_{jj}^2) = 0$.} 
\end{equation*}
The solution to this recursion is
$  \deg(\mathcal S_{ij}^2) = \sum_{k = n-j+1}^{n-i-1} \binom{2n - i - j}{k} + 2\binom{2n - i- j - 1}{n - j}.$
\end{conjecture}

The ED degree was found in  \cite[Corollary 4.5]{LLY} 
for a special class of Schubert varieties:
$$ \begin{matrix}
  {\rm EDdegree}(\mathcal S_{i,i+1}) \,=\, \binom{n - i + 1}{2} \quad {\rm and} \quad
  {\rm EDdegree}(\mathcal S_{in}) \,=\, n - i.
  \end{matrix}
$$
For the general case we have the following conjecture:

\begin{conjecture} \label{conj:schubertED}
  If $i < j-1$, then the ED degree of the Schubert variety $\mathcal S_{ij}$ equals
$$    {\rm EDdegree}(\mathcal S_{ij}) \,\,=\,\, 2(n-j+1)^2(j - i) - (n - j + 1)(n-i).  $$
\end{conjecture}
\noindent
We verified Conjecture~\ref{conj:schubertED} for $n \leq 12$ with Gr\"obner basis computations over finite fields in {\tt Maple}. For $n = 13,14$ we verified the conjecture in {\tt HomotopyContinuation.jl}.

There is a dramatic drop from the ED degree to the GD degree for Schubert varieties.
\begin{theorem}\label{thm:schubertGD1}
The $n$ Schubert varieties  $\mathcal{S}_{12},\mathcal{S}_{23},
\ldots,\mathcal{S}_{n-1,n},\mathcal{S}_{1n}$
have GD degree $1$.
The minimizer is obtained from the data point by zeroing out appropriate Pl\"ucker coordinates. 
\end{theorem}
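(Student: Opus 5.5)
The plan is to reduce each of the $n$ models to the computation behind Proposition~\ref{prop:excesscomponent}. In each case I will identify the complex critical points of $\operatorname{trace}(\tilde P\tilde Q)$ on $\mathcal M^2$ as eigenvector-type conditions for a compressed copy of $\tilde Q$. I will then check that every critical point except one lies in the algebraic cut locus $\mathcal C_Y=\{\operatorname{trace}(XY)=0\}$, using that $\operatorname{trace}(XY)=0$ exactly when the plane $X$ meets $Y^\perp$. Throughout, $Y$ is the plane spanned by the rows of $B$, $\tilde Q$ is the orthogonal projection onto $Y$, and $Y$ is generic.

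\emph{Case $\mathcal S_{i,i+1}$.} This case includes $\mathcal S_{12}={\rm Gr}(2,n)$, and the point $\mathcal S_{n-1,n}$ is trivial. Here $\mathcal S_{i,i+1}={\rm Gr}(2,W)$ with $W=\operatorname{span}(e_i,\dots,e_n)$. Let $\Pi$ be the orthogonal projection onto $W$. For $\tilde P$ supported on $W$ we have $\operatorname{trace}(\tilde P\tilde Q)=\operatorname{trace}(\tilde P\,Q_W)$ with $Q_W=\Pi\tilde Q\Pi$. I will redo the tangent-space computation of Proposition~\ref{prop:excesscomponent} inside $W$, writing it algebraically (criticality means $(I-\tilde P)Q_W\tilde P=0$) so that it also covers complex points after saturating by $\operatorname{trace}(X^2)$. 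This shows that the critical planes are the $2$-dimensional $Q_W$-invariant subspaces of $W$.

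Next I describe these invariant subspaces. Generically $Q_W$ has rank $2$, with image $\Pi(Y)$ and two distinct nonzero eigenvalues, and its kernel is $W\cap Y^\perp$. So an invariant plane either equals $\Pi(Y)$ or contains a vector of $W\cap Y^\perp$. Planes of the second kind meet $Y^\perp$ and are therefore removed by the saturation. The plane $\Pi(Y)$ is generically not in $\mathcal C_Y$. Its Plücker coordinates are $\Pi b_1\wedge\Pi b_2$, which is $Y$ with the coordinates $x_{rs}$, $r<i$, set to zero. So the GD degree is $1$.

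\emph{Case $\mathcal S_{1n}$.} This is the variety of lines through $e_n$, so $X=v\wedge e_n$ with $v\in e_n^\perp$ and $x_{rn}=v_r$. Then $\tilde P=vv^T/v^Tv+e_ne_n^T$, so the objective equals $q_{nn}+v^TQ'v/v^Tv$, where $Q'=\Pi\tilde Q\Pi$ and $\Pi$ now projects onto $e_n^\perp$. The critical points are eigenvectors of $Q'$ with $v^Tv\neq0$, and $Q'$ again has rank $2$. There are three kinds of eigenvector to examine.
\begin{itemize}
\item A kernel vector $v$ satisfies $v\perp\Pi Y$, and since $v_n=0$ this gives $v\in Y^\perp$. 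Such points are cut.
\item The unit vector $v\in Y\cap e_n^\perp$ satisfies $Q'v=v$. It survives, and $v$ is proportional to $(x_{rn}(Y))_r$, which is exactly $Y$ with all other Plücker coordinates zeroed.
\item Let $u\in Y$ be a unit vector with $u\perp v$. A short computation gives $Q'\Pi u=(1-u_n^2)\Pi u$. The corresponding plane is $\operatorname{span}(u,e_n)$, and it contains $e_n-u_nu$, which is orthogonal to both $u$ and $v$ and hence lies in $Y^\perp$. So this point is cut as well.
\end{itemize}
This leaves exactly one point, and the GD degree is $1$.

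The main obstacle is rigor over $\mathbb C$. I must make sure that the saturations by $\operatorname{trace}(X^2)$ and $\operatorname{trace}(XY)$ remove no further genuine points and leave no embedded multiplicity. First, the eigenvalues of $Q_W$ and $Q'$ on their images should be distinct for generic $Y$. Second, the surviving point should not be isotropic, i.e.\ it should satisfy $\operatorname{trace}(X^2)\neq0$ and $\operatorname{trace}(XY)\neq0$. Both are open conditions, so I would verify them on one explicit real $Y$. Multiplicity one then follows from the solution being a simple eigenvector.
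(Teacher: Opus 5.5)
Your proposal is correct, and it takes a different route from the paper.

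\textbf{The paper's route.} The paper does not enumerate critical points. It reads the minimizer off the proof of Theorem~\ref{thm:snug}. There, the interlacing bounds of Lemma~\ref{lem:interlacing} and the principal vectors of the pair $(\tilde Q,E_i)$ show that the optimum over $\mathcal S_{i,i+1}$ is the orthogonal projection of the row span of $B$ onto $E_i$, obtained by setting $b_{1j}=b_{2j}=0$ for $j<i$. The paper then concludes that the GD degree is $1$ because this minimizer is a rational function of the data, and it dismisses $\mathcal S_{1n}$ as ``similar''.

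\textbf{Your route.} You extend the tangent-space computation of Proposition~\ref{prop:excesscomponent} to the compression $\Pi\tilde Q\Pi$. You list all complex critical points as invariant planes (for $\mathcal S_{i,i+1}$) or eigenvectors (for $\mathcal S_{1n}$). You then check that every one except $\Pi(Y)$, respectively ${\rm span}(v,e_n)$, meets $Y^\perp$ and is removed by saturating by ${\rm trace}(XY)$.

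\textbf{What each buys.} The paper's argument is shorter and ties the result to the spectral bounds and snugness. However, the inference from a rational minimizer to GD degree $1$ is not a count by itself. It does not exclude further complex critical points off $\mathcal C_Y$; ruling those out would need something like irreducibility of the critical correspondence, or exactly the check you carry out. Your enumeration supplies this and treats $\mathcal S_{1n}$ explicitly. The two routes land on the same point: your $v$ and $u$ are the principal vectors $q_2,q_1$ of the paper's snug argument with $f_1=e_n$.

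\textbf{One small addition.} The theorem's second sentence says the surviving point is the \emph{minimizer}, and you show only that it is obtained by zeroing Pl\"ucker coordinates. This follows at once from your eigenvalue description:
\begin{itemize}
\item For $\mathcal S_{i,i+1}$, the objective ${\rm trace}(\tilde P\tilde Q)$ equals $\lambda_1+\lambda_2$ at $\Pi(Y)$. The critical points on $\mathcal C_Y$ give $\lambda_1$, $\lambda_2$ or $0$, all strictly smaller since $\lambda_2>0$.
\item For $\mathcal S_{1n}$, the objective is $q_{nn}$ plus the eigenvalue, and $1>1-u_n^2>0$.
\end{itemize}
Hence the global optimum, which exists by compactness of the real locus and is a real critical point, is the surviving point.

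Your remark about multiplicity one is harmless but unnecessary, since the GD degree counts points of the variety.
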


The proof of Theorem \ref{thm:schubertGD1} is given at the end of this section.
For all other Schubert varieties we put forth the following remarkable conjecture.
This was verified for $n \leq 12$.
 
\begin{conjecture}\label{conj:schubertGD}
If $1 < j - i < n-1$, then the Schubert variety $\mathcal S_{ij}$ has GD degree $2$.
\end{conjecture}

\begin{example}[Schubert curves]
Consider the curve $\mathcal{S}_{n-2,n}$ and write the data as
$$ B \,\,  = \,\, \begin{footnotesize} \begin{bmatrix} \,b_1 & b_2 & \cdots & b_{n-3} & 1 & \beta   & 0 \,\\
    \, c_1 & c_2 & \cdots & c_{n-3} & 0 &  \gamma & 1 \end{bmatrix} \end{footnotesize}
                                                  \,\,\, \hbox{and set} \,\,\,       \,                   
b^2 := \sum_{i=1}^{n-3} b_i^2   \,,\,\,
bc := \sum_{i=1}^{n-3} b_i c_i\,,\,\,
c^2 := \sum_{i=1}^{n-3} c_i^2 . $$
This model has GD degree $2$. A computation shows that
 the optimal solutions are
$$ A(t) \,\, = \,\, \begin{footnotesize} \begin{bmatrix} \, 0 & 0 & \cdots & 0 & 1 & t & 0 \\
0 & 0 & \cdots & 0 & 0 & 0 & 1 \end{bmatrix}, \end{footnotesize}
$$
where $\,
\bigl((-c^2\!-\!1) \beta + bc \gamma \bigr)\cdot t^2
   \,+\, \bigl((c^2\!+\!1)\beta ^2 - 2bc \beta \gamma  + b^2 \gamma^2  - c^2-1\bigr)\cdot t
   \,+\,  (c^2\!+\!1)\beta - bc \gamma      \,= \, 0$.
\end{example}

For fixed data $\tilde Q \in {\rm pGr}(2,n)$, define a map $\sigma_{\tilde Q} \,:\, \mathcal{M}_{\RR} \, \rightarrow \,\RR^2$ from the real locus of the model
that sends $X$ to the pair $(\lambda, \mu)$, where $\lambda \geq \mu$ are the nonzero eigenvalues of $\tilde P\tilde Q$ where $\tilde P$ is as in \eqref{eq:projmatrix}.
We call the image of $\sigma_{\tilde Q}$  the {\em spectral region} of the pair
$(\mathcal{M},\tilde Q)$; see Figure~\ref{fig:spectralregion}.

\begin{remark} \label{rmk:computespectral}
To compute the spectral region, we consider the variety in $\mathcal{M} \times \CC^2$ 
given by the ideal
$I_{\mathcal M} + \bigl\langle \det(\lambda\, {\rm Id}_n - \tilde P \tilde Q) ,\, \det(\mu\, {\rm Id}_n - \tilde P \tilde Q) \bigr\rangle$.
Consider the map from that variety onto the second factor $\CC^2$.
The branch locus of this map can be computed using elimination.
It is the algebraic boundary of the spectral region.
This is how the quartic in (\ref{eq:spectralcurve}) was computed.
\end{remark}

Our problem (\ref{eq:optproblem}) asks for the 
maximum coordinate sum $\lambda+ \mu$ over the spectral region.
For other metrics on ${\rm Gr}(2,n)$, like that  in \cite{LR}, other
monotone functions in $\lambda,\mu$ are maximized.
Multi-objective
distance optimization on  $\mathcal{M} \subset {\rm Gr}(2,n)$ means
identifying the Pareto frontier of the spectral region.
That frontier is a subset of the algebraic boundary~in Remark~\ref{rmk:computespectral}.
We say that the model $\mathcal{M}$ is {\em snug} if the Pareto frontier is a single point for all data $\tilde Q$.

We now give bounds on the spectral region and characterize the snug Schubert varieties. 
Fix
$E_j = {\rm diag}(0,0,\ldots,0,1,1,\ldots,1)$, where $j-1$ entries are zero.
 For an $n \times n$ matrix $M$ with real eigenvalues,
  we write the eigenvalues in decreasing order, i.e.,~$\lambda_1(M) \geq \cdots \geq \lambda_n(M)$.

\begin{theorem}\label{thm:snug}
If $\tilde P \in \mathcal S_{ij}$, then   $\lambda \!\leq \! \lambda_1(\tilde Q  E_i)$ 
and $\mu \!\leq\! \min(\lambda_2(\tilde Q  E_i), \lambda_1(\tilde Q E_j))$. 
Both bounds are attained for all $\tilde Q \in {\rm pGr}(2,n)$ if and only if
$\mathcal{S}_{ij}$ is snug if and only if $i = 1$ or $j - i = 1$.
  \end{theorem}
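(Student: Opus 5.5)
The plan is to read the bounds as interlacing/Courant–Fischer statements and then analyze when they are attained simultaneously.

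First, the bounds. A point $\tilde P \in \mathcal S_{ij}$ is orthogonal projection onto a 2-plane $L \subset \RR^n$. The plane $L$ lies in $U_i = \mathrm{span}(e_i,\dots,e_n)$, the image of $E_i$, and meets $U_j = \mathrm{im}(E_j)$ in at least a line. This holds because $X \in \mathcal L_{ij}$ means the row span of $A$ lies in $U_i$ and contains a vector of $U_j$.

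The nonzero eigenvalues $\lambda \geq \mu$ of $\tilde P\tilde Q$ are the eigenvalues of the compression $\tilde P \tilde Q \tilde P|_L$. By Cauchy interlacing for the compression of $E_i\tilde Q E_i$ from $U_i$ to $L \subset U_i$, we get $\lambda \le \lambda_1(\tilde Q E_i)$ and $\mu \le \lambda_2(\tilde Q E_i)$. Here we use that $\tilde Q E_i$ and $E_i \tilde Q E_i$ have the same spectrum.

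For the second bound on $\mu$, use Courant–Fischer: $\mu = \min_{v\in L,|v|=1} v^T\tilde Q v$. Choosing a unit vector $v \in L\cap U_j$ gives $\mu \le v^T E_j\tilde Q E_j v \le \lambda_1(\tilde Q E_j)$.

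Second, the ``if'' direction. Suppose $i=1$ or $j=i+1$. I would show that there is a single $\tilde P$ attaining both bounds, so the Pareto frontier is a point.

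For $j=i+1$, the Schubert condition $L\cap U_{i+1}\neq 0$ is automatic for $L \subset U_i$, since $\dim U_{i+1} = \dim U_i - 1$. So $\mathcal S_{i,i+1} = \mathrm{Gr}(2,U_i)$. Also $\lambda_1(\tilde QE_{i+1}) \ge \lambda_2(\tilde QE_i)$ by interlacing. Taking $L$ to be the top two eigenvectors of $E_i\tilde QE_i$ attains both bounds.

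For $i=1$, we have $\lambda_1(\tilde Q)=1$ and $\lambda_2(\tilde Q)=1$, since $\tilde Q$ is a rank-2 projection. Thus the bound on $\mu$ is $\min(1,\lambda_1(\tilde QE_j))$. Take $w$ to be the top eigenvector of $E_j\tilde QE_j$ in $U_j$ and $L = \mathrm{span}(w, u)$, where $u \in \mathrm{im}(\tilde Q)$ is orthogonal to $\tilde Q w$. Then $\tilde Q L \subset L$ requires a check. I expect $L=\mathrm{span}(w,\tilde Q w)$ to be $\tilde Q$-invariant only when $w$ is special. More likely the right choice is $L \ni \tilde Q$-image direction, with $\lambda=1$ and $\mu$ equal to the bound. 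I would verify this with a $2\times 2$ computation using the principal angles between $\mathrm{im}\tilde Q$ and $U_j$.

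Snugness then follows. Both coordinates are simultaneously maximal at one point, and every point of the spectral region is dominated by it.

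Third, the converse. Suppose $i\ge 2$ and $j\ge i+2$. I will exhibit an open set of data $\tilde Q$ for which the two bounds cannot both be attained. For example, the data in Example \ref{ex:spectralcurve} with $\mathcal S_{25}$ has a curved Pareto frontier. I would generalize this by choosing $\tilde Q$ whose top eigenvector of $E_i\tilde QE_i$ is far from $U_j$.

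Attaining $\lambda=\lambda_1(\tilde QE_i)$ forces $L$ to contain that top eigenvector $v_1$. Attaining the $\mu$ bound then forces $L$ to contain a maximizer in $U_j$, or the second eigenvector. Because $\dim(U_i/U_j)\ge 2$, these requirements are generically incompatible. Non-attainment of the bounds then implies a Pareto frontier with more than one point. To see this, note the spectral region is connected, and its projections reach the individual maxima at distinct points. This is where the three conditions (attainment, snug, $i=1$ or $j-i=1$) become equivalent.

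The main obstacle is the $i=1$ case. There one must produce an explicit 2-plane meeting $U_j$ that realizes $\lambda=1$ and the optimal $\mu$ simultaneously. A second difficulty is making the generic incompatibility argument rigorous in the converse.
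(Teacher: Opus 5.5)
\textbf{Overall route and the gap at $i=1$.} Your plan follows the paper's route. You get the bounds by interlacing; your Cauchy/Courant--Fischer argument is an elementary version of Lemma~\ref{lem:interlacing}. You use the top two eigenvectors of $E_i\tilde QE_i$ (the paper's principal vectors) for $j=i+1$, and uniqueness of these vectors for the converse.

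The genuine gap is the case $i=1$, which you leave open. The concern that stalls you is a red herring: nothing requires $\tilde Q L\subset L$, only the $2\times 2$ compression of $\tilde Q$ to $L$ matters. Your first candidate already works, and it is exactly the paper's plane $\mathrm{span}(f_1,q_2)$. Take $w\in U_j$ a unit vector maximizing $w^T\tilde Qw$, and $u\in\mathrm{im}(\tilde Q)$ a unit vector orthogonal to $\tilde Qw$. Then $\langle w,u\rangle=\langle w,\tilde Qu\rangle=\langle \tilde Qw,u\rangle=0$, $u^T\tilde Qw=0$ and $u^T\tilde Qu=1$. So in the orthonormal basis $(u,w)$ the compression is $\mathrm{diag}(1,\lambda_1(\tilde QE_j))$. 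Hence $\lambda=1=\lambda_1(\tilde QE_1)$ and $\mu=\lambda_1(\tilde QE_j)=\min(1,\lambda_1(\tilde QE_j))$, while $w\in L\cap U_j$ gives $L\in\mathcal S_{1j}$.

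\textbf{The converse needs tightening.} Your stated reason, $\dim(U_i/U_j)\ge 2$, is the right one only in the regime $\lambda_2(\tilde QE_i)<\lambda_1(\tilde QE_j)$. There, attaining both bounds forces $L=\mathrm{span}(v_1,v_2)$ (equality case of Ky Fan, since $\lambda_2>\lambda_3=0$), and this plane generically misses $U_j$. In the opposite regime, $\mathrm{span}(v_1,w)$ does lie in $\mathcal S_{ij}$. What fails there is the value of $\mu$ on it, because $v_1$ is unique for $i\ge 2$ and generically not orthogonal to $w$. That simplicity of $\lambda_1(\tilde QE_i)$, not the dimension count, is the operative fact. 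This is why the same count does not contradict the case $i=1$, $j\ge 3$, where $\lambda_1(\tilde Q)=\lambda_2(\tilde Q)=1$. So commit to the first regime, as the paper does.

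There is a second issue. Bounds that are not simultaneously attained do not yet give a Pareto frontier with two points, and connectedness of the spectral region does not help. You need the individual maxima of $\lambda$ and $\mu$ over $\mathcal S_{ij}$ to occur at different points; the paper also leaves this implicit. A concrete $\tilde Q$ settles it. For $i\ge 2$ and $j\ge i+2$, let $\mathrm{im}(\tilde Q)$ be spanned by $q_1=(e_n+\epsilon e_i)/\sqrt{1+\epsilon^2}$ and $q_2=\cos\theta\, e_{i+1}+\sin\theta\, e_1$, with $\epsilon\neq 0$ and $0<\theta<\pi/2$.
\begin{itemize}
\item Any $L\in\mathcal S_{ij}$ with $\lambda=1=\lambda_1(\tilde QE_i)$ must contain $q_1$.
\item Since such an $L$ meets $U_j$, comparing the $e_i$- and $e_{i+1}$-coordinates shows that $L\perp e_{i+1}$. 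Hence the compression of $\tilde Q$ to $L$ has rank one, and $\mu=0$.
\item Meanwhile $\mathrm{span}(e_n,e_{i+1})\in\mathcal S_{ij}$ has $\mu=\min(1/(1+\epsilon^2),\cos^2\theta)>0$.
\end{itemize}
So the maxima of $\lambda$ and $\mu$ are never attained together, and $\mathcal S_{ij}$ is not snug.
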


The last condition shows that among the $\binom{n}{2}$ Schubert
varieties,  only $2n-3$ are snug. To prove Theorem \ref{thm:snug} we need the lemma below, which follows from \cite[Theorem 1]{thompson}.

\begin{lemma}\label{lem:interlacing}
  Let $P_1 \in {\rm pGr}(k_1,n)$, $P_2 \in {\rm pGr}(k_2,n)$,
 and let $S \in {\rm pGr}(k, n)$ be the orthogonal projection onto the intersection of the linear spaces represented by $P_1$ and $P_2$. 
  Let $ Q \in {\rm Sym}^2 \RR^n$ be a projection matrix, i.e., $ Q^2 =  Q$.
  Then for $i = 1, \ldots, n - (k_2 - k)$ we have
    \begin{align*}
      \lambda_i( Q P_1)\, \geq \,\lambda_i( Q  S) \,\geq \,\lambda_{i + k_2 - k}( Q  P_2).
    \end{align*}
\end{lemma}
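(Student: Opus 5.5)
The plan is to reduce everything to Cauchy--Poincaré interlacing for compressions of the symmetric matrix $Q$ to nested subspaces. Write $U_1={\rm im}(P_1)$, $U_2={\rm im}(P_2)$ and $W=U_1\cap U_2={\rm im}(S)$, with $\dim W=k$.

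\textbf{Step 1 (real spectrum).} For any orthogonal projection $P$, the products $QP=Q\cdot P$ and $P\cdot Q$ have the same nonzero eigenvalues with the same algebraic multiplicities, since $AB$ and $BA$ share nonzero spectrum. Using $P^2=P$, the same holds for $PQ=P\cdot(PQ)$ and $(PQ)\cdot P=PQP$. So the nonzero spectrum of $QP$ equals that of the symmetric positive semidefinite matrix $PQP$ (here $Q=Q^2=Q^TQ$). Both are $n\times n$, so $QP$ has real eigenvalues, all in $[0,1]$, and after padding with zeros the ordered lists agree: $\lambda_i(QP)=\lambda_i(PQP)$ for all $i$. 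The same applies with $P$ replaced by $P_1$, $P_2$ or $S$.

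\textbf{Step 2 (compressions).} If $P$ projects onto a subspace $U$ of dimension $m$, then the nonzero part of the spectrum of $PQP$ is the spectrum of the compression $Q_U$, the quadratic form $x\mapsto x^TQx$ restricted to $U$. Thus $\lambda_i(QP)=\lambda_i(Q_U)$ for $i\le m$ and $\lambda_i(QP)=0$ for $i>m$. By Courant--Fischer, for $i\le m$,
\[
\lambda_i(Q_U)\,=\,\max_{V\subseteq U,\ \dim V=i}\ \min_{x\in V,\ \|x\|=1} x^TQx .
\]

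\textbf{Step 3 (the two inequalities).} Since $W\subseteq U_1$, every $i$-dimensional $V\subseteq W$ is admissible in the max--min for $U_1$. This gives $\lambda_i(Q_{U_1})\ge\lambda_i(Q_W)$ for $i\le k$. For $i>k$ the right side $\lambda_i(QS)$ is zero, while the left side is nonnegative, so the first inequality holds for all $i$.

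For the second inequality, $W\subseteq U_2$ has codimension $c=k_2-k$. I would use the dual min--max form
\[
\lambda_j(Q_{U_2})\,=\,\min_{V\subseteq U_2,\ \dim V=k_2-j+1}\ \max_{x\in V,\ \|x\|=1} x^TQx .
\]
Take $j=i+c$ and let $V'\subseteq W$ be a minimizer of the analogous min--max expression for $\lambda_i(Q_W)$, which has dimension $k-i+1=k_2-j+1$. Since $V'\subseteq U_2$, it is admissible for the min above, so $\lambda_{i+c}(Q_{U_2})\le\lambda_i(Q_W)$. This is classical Cauchy interlacing.

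\textbf{Step 4 (the zero tails).} The padded eigenvalues need checking. If $i\le k$ then $i+c\le k_2$, and Step 3 applies directly. If $k<i\le n-c$, then $i+c>k_2$, so $\lambda_{i+c}(QP_2)=0\le\lambda_i(QS)$, and the second inequality holds trivially. This explains the stated range $i\le n-(k_2-k)$.

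The main obstacle is Step 1: justifying that the non-symmetric product $QP$ has real eigenvalues matching those of $PQP$ with correct multiplicities. The $AB$/$BA$ argument handles the nonzero part, and the zeros are then forced by the matrix size. The rest is standard interlacing; alternatively, one could simply cite \cite[Theorem 1]{thompson}.
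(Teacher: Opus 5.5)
Your proof is correct, but it takes a different (self-contained) route from the paper. The paper gives no argument of its own. It derives the lemma from Thompson's interlacing theorem (Theorem 1 of the cited reference), which in this setting amounts to interlacing for singular values of a column-submatrix.

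Concretely, let $M_Q$, $M_W$, $M_j$ have orthonormal columns spanning ${\rm im}(Q)$, $W$ and $U_j={\rm im}(P_j)$, with the columns of $M_j$ extending those of $M_W$. Then the nonzero eigenvalues of $QP_j$ are the nonzero squared singular values of $M_Q^TM_j$. The matrix $M_Q^TM_W$ arises from $M_Q^TM_j$ by deleting $k_j-k$ columns. This gives both inequalities at once, the second with the shift $k_2-k$.

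You instead work with the compressions of the quadratic form $x^TQx$ to $U_1$, $W$, $U_2$, and prove Cauchy interlacing from Courant--Fischer. Since $(M_Q^TM)^T(M_Q^TM)=M^TQM$ is exactly the compression of $Q$ to ${\rm im}(M)$, the two routes are the same mechanism seen from two sides.

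What your version buys is completeness, on two points the citation leaves implicit:
\begin{itemize}
\item You justify that the non-symmetric product $QP$ has real spectrum equal, with multiplicities, to that of $PQP$. Your $AB$/$BA$ argument even gives equal characteristic polynomials, since all matrices are $n\times n$.
\item You check the zero-padded range $k<i\le n-(k_2-k)$.
\end{itemize}
What the citation buys is brevity and some extra generality: the singular-value formulation also permits deleting rows, i.e., shrinking the subspace of $Q$ at the same time.
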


\begin{proof}[Proof of Theorem~\ref{thm:snug}]
  Taking $P_1 = E_i$, $P_2 = P$ and $Q = \tilde Q$ in Lemma~\ref{lem:interlacing} yields the inequalities
$\,  \lambda \leq \lambda_1(\tilde Q  E_i)\,$ and $\, \mu  \leq \lambda_2( \tilde Q  E_i)$.
  Taking instead $P_1 = E_j$ in Lemma~\ref{lem:interlacing} yields the inequality
$\,    \mu \leq \lambda_1(\tilde Q E_j) $.
This proves the first assertion.

We turn to the second assertion. First let $i \!=\! 1, j \!<\! n$, so
$\mathcal S_{1j} = \{ \tilde P : {\rm dim}(\tilde P \cap E_j) = 1 \}$.
  Let $\{q_1, q_2\}$ and $\{f_1,f_2, \ldots \}$ be orthonormal bases for the subspaces $\tilde Q$ and $E_j$
  such that $\langle q_1, f_1 \rangle = \sqrt{\lambda_1( \tilde Q E_j)}$  and
  $\langle q_2, f_2 \rangle = \sqrt{\lambda_2(\tilde Q E_j)}$.
  Let $\tilde P^*$ denote the orthogonal projection onto the span of $f_1$ and $q_2$,
  so $\lambda = \lambda_1( \tilde Q \tilde P^*)$ and $\mu = \lambda_2( \tilde Q  \tilde P^*)$.
  This implies $\lambda = 1 = \langle q_2, q_2 \rangle^2$ and
   $  \,\mu = \lambda_1(\tilde Q E_j) = \langle q_1, f_1 \rangle^2$,
   so the upper bounds are attained.

Next consider the Schubert variety $\mathcal S_{i,i+1} = 
\{\tilde P : \tilde P \subseteq E_i\}$.
  Let $\{q_1, q_2\}$ and $\{f_1, f_2,\ldots\}$ be orthonormal bases for $\tilde Q$ and $E_i$
  respectively such that $\langle q_1, f_1 \rangle = \sqrt{\lambda_1( \tilde Q E_i)}$ 
  and $\langle q_2, f_2 \rangle = \sqrt{\lambda_2(\tilde Q  E_i)}$.
    The desired minimizer $\tilde P^*$ is the orthogonal projection onto $\langle f_1, f_2 \rangle$.
    
It remains to be shown that none of the other  Schubert varieties are snug. Consider
 $$ \mathcal{S}_{ij} \, = \, \bigl\{\, \tilde P \,:\, {\rm dim}(\tilde P \cap E_j) = 1
 \,\, {\rm and} \,\, \tilde P \subseteq E_i \,\bigr\} \,\, = \,\, \mathcal{S}_{1j} \,\cap \, \mathcal{S}_{i,i+1}. $$
There exists a matrix $\tilde Q$ such that $\lambda_2(\tilde QE_i) < \lambda_1(\tilde Q E_j)$.
The inequalities
$\,  \lambda \leq \lambda_1(\tilde QE_i) \,$ and $\, \mu \leq \lambda_2(\tilde QE_i)\,$
are attained by the minimizer of \eqref{eq:optproblem} over $\mathcal S_{i,i+1} $,
which strictly contains $\mathcal S_{ij}$.
Generically, the minimizer intersects $E_j$ trivially, so it is not contained in $\mathcal S_{ij}$. 
By the uniqueness of the principal vectors, the bounds cannot be simultaneously achieved.
\end{proof}

\begin{proof}[Proof of Theorem~\ref{thm:schubertGD1}]
We prove the claim for $\mathcal S_{i,i+1}$. 
The argument is similar for $\mathcal S_{1n}$. Fix
\begin{equation*}
    B \,\,=\,\, \begin{bmatrix}
      \,b_{11} & \cdots & b_{1,i-1} &  1 & 0 & b_{1,i+2} & \cdots & b_{1n} \, \\
      \,b_{21} & \cdots & b_{2,i-1} & 0 & 1 & b_{2,i+2} & \cdots & b_{2n} \,
    \end{bmatrix}.
\end{equation*}
By the proof of Theorem~\ref{thm:snug}, the solution of \eqref{eq:optproblem} 
for $\mathcal{M} = \mathcal S_{i,i+1}$ is the orthogonal projection of the row span of $B$ onto $E_i$. 
This is found by setting $b_{1j} = b_{2j} = 0$ for $j = 1, \ldots, i-1$.
Hence the minimizer of \eqref{eq:optproblem} is a rational function of the data,
and so the GD degree is one. 
\end{proof}

\bigskip 
\bigskip

\noindent
\footnotesize {\bf Authors' addresses:}
\smallskip

\noindent Hannah Friedman, UC Berkeley, USA \hfill  {\tt hannahfriedman@berkeley.edu}
 
\noindent Andrea Rosana,  MPI MiS Leipzig, Germany \hfill {\tt andrea.rosana@mis.mpg.de}

\noindent Bernd Sturmfels, MPI MiS Leipzig, Germany \hfill {\tt bernd@mis.mpg.de}

\end{document}